\documentclass[11pt]{article}
\usepackage[T1]{fontenc}
\usepackage[utf8]{inputenc}
\usepackage{lmodern}
\usepackage[a4paper,margin=28mm]{geometry}
\usepackage{amsmath,amssymb,amsthm,mathtools}
\usepackage{microtype,booktabs,array,tabularx}
\usepackage{tikz}
\usepackage[hidelinks]{hyperref}
\hypersetup{pdftitle={Multivariate stability of powered triangular recurrences and parametrised Eulerian polynomials}}
\usepackage[nameinlink,capitalize,noabbrev]{cleveref}
\usepackage{enumitem}
\newtheorem{theorem}{Theorem}[section]
\newtheorem*{unnumtheorem}{Theorem}
\newtheorem{maintheorem}{Theorem}

\crefname{maintheorem}{Theorem}{Theorems}
\Crefname{maintheorem}{Theorem}{Theorems}
\newtheorem{mainconjecture}{Conjecture}
\crefname{mainconjecture}{Conjecture}{Conjectures}
\Crefname{mainconjecture}{Conjecture}{Conjectures}
\newtheorem{proposition}[theorem]{Proposition}
\newtheorem{lemma}[theorem]{Lemma}

\theoremstyle{definition}
\newtheorem{definition}[theorem]{Definition}
\newtheorem{example}[theorem]{Example}
\theoremstyle{remark}
\newtheorem{remark}[theorem]{Remark}

\newcommand{\SSS}{\mathfrak{S}}

\DeclareMathOperator{\des}{des}
\DeclareMathOperator{\DES}{DES}
\DeclareMathOperator{\exc}{exc}
\DeclareMathOperator{\EXC}{EXC}
\DeclareMathOperator{\bl}{bl}
\DeclareMathOperator{\Comp}{Comp}

\DeclareMathOperator{\wt}{wt}
\numberwithin{equation}{section}
\tikzset{treevertex/.style={circle,draw,inner sep=1.5pt,minimum size=5.5mm,font=\small},treeedge/.style={line width=.5pt}}
\title{Multivariate stability of powered triangular recurrences\\and parametrised Eulerian polynomials}
\author{Umesh Shankar\\
    Department of Computer Science and Automation,\\
    Indian Institute of Science Bengaluru,\\
    Bengaluru 560012, Karnataka, India\\
    Email: \texttt{umeshshankar@outlook.com}
}
\date{\today}
\begin{document}
\maketitle

\begin{abstract}
We prove multivariate stability for a class of affine triangular
recurrences whose two coefficients are raised to an arbitrary positive
integer power.  The variables keep track of cross-step heights in a weighted
lattice-path model. The applications include stability and real-rootedness of parametrised Eulerian,
Stirling, and Lah families. For the parametrised Eulerian polynomials,
we obtain multivariate refinements of known real-rootedness results and
prove simplicity and strict interlacing of consecutive rows.
A kernel-preserving bijection with increasing binary trees yields a
synchronised Foata--Strehl action on pairs and a combinatorial interpretation of
their gamma coefficients along with a recurrence.
The same bijection identifies refinement posets that admit symmetric
Boolean and chain decompositions for parameter one and cover-supported
$\mathfrak{sl}_2$ operators prove their PECKness.  We extend the
stability and strict interlacing results to synchronised Stirling
permutations and prove PECKness for ordinary Stirling permutations
with a fixed number of plateaux.
\end{abstract}

\medskip
\noindent\textbf{Keywords:} stable polynomial; triangular recurrence;
Eulerian polynomial; Stirling permutation; gamma-positivity;
symmetric chain decomposition.

\section*{Introduction}
\renewcommand{\theequation}{I.\arabic{equation}}
For a positive integer $n$, let $[n]=\{1,2,\dots,n\}$ and let
$\SSS_n$ denote the set of permutations of $[n]$.
For $\pi=\pi_1\pi_2\cdots\pi_n\in\SSS_n$, let
$\DES(\pi)=\{i\in[n-1]:\pi_i>\pi_{i+1}\}$ be its descent set and let
$\des(\pi)=|\DES(\pi)|$.
Similarly, let $\EXC(\pi)=\{i\in[n]:\pi_i>i\}$ and
$\exc(\pi)=|\EXC(\pi)|$ be its excedance set and number of excedances.
The $n$th Eulerian polynomial is
\[
 A_n(t)=\sum_{\pi\in\SSS_n}t^{\des(\pi)}.
\]
It also enumerates permutations by excedances:
$A_n(t)=\sum_{\pi\in\SSS_n}t^{\exc(\pi)}$.
These polynomials and their many interpretations are treated in
Petersen's book~\cite{Petersen2015}.

A polynomial $p(t)=\sum_{k=0}^d a_kt^k$ is \emph{palindromic} if
$a_k=a_{d-k}$ for every $1\le k\le d$.  Every such polynomial has a unique expansion
\[
 p(t)=\sum_{j=0}^{\lfloor d/2\rfloor}
       \gamma_jt^j(1+t)^{d-2j}.
\]
It is \emph{gamma-positive} if all $\gamma_j$ are nonnegative.
A real polynomial is \emph{real-rooted} if all its zeros are real.
The ordinary Eulerian polynomials are palindromic, gamma-positive,
and real-rooted~\cite{Petersen2015}.
Multivariate refinements retain additional information about the
descent statistic.  A polynomial with real coefficients is
\emph{real stable} if it is nonzero whenever all its variables lie
in the open upper half-plane.
The following result is one such refinement due to Haglund and Visontai \cite{haglund-visontai-stable-multivariate}.

\begin{unnumtheorem}[{\cite[Theorem~3.2]{haglund-visontai-stable-multivariate}}]
Put $\pi_0=\pi_{n+1}=0$ for $\pi\in\SSS_n$.  The polynomial
\[
 \mathcal E_n(\mathbf x,\mathbf y)
 =\sum_{\pi\in\SSS_n}
   \prod_{\substack{1\le i\le n\\\pi_i>\pi_{i+1}}}x_{\pi_i}
   \prod_{\substack{1\le i\le n\\\pi_{i-1}<\pi_i}}y_{\pi_i}
\]
is real stable.
\end{unnumtheorem}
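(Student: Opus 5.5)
Induction on $n$, inserting the largest letter $n+1$ into $\pi\in\SSS_n$, with the convention $\pi_0=\pi_{n+1}=0$. Record each position of $\pi$ (the $n+1$ gaps between consecutive letters of $0\pi_1\cdots\pi_n0$) by the type of the adjacent pair. Inserting $n+1$ between $\pi_i$ and $\pi_{i+1}$ makes $n+1$ both a descent top and an ascent top, so it always contributes $x_{n+1}y_{n+1}$. The rest depends on the pair $(\pi_i,\pi_{i+1})$:
\begin{itemize}
\item If $\pi_i>\pi_{i+1}$ (a descent), $\pi_i$ was a descent top and remains one, since $\pi_i<n+1$ makes it an ascent into $n+1$. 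Here $\pi_i$ loses its $x_{\pi_i}$ and gains $y$? Tracking carefully: $\pi_i$ stays weighted in $y$ iff $\pi_{i-1}<\pi_i$, which is unchanged, but its descent is destroyed. So the $x_{\pi_i}$ factor is removed.
\item If $\pi_i<\pi_{i+1}$ (an ascent), $\pi_{i+1}$ loses its $y_{\pi_{i+1}}$ factor, because it now follows $n+1$.
\end{itemize}
Thus the insertion acts as a linear operator. Each descent top $j$ contributes a term $x_{n+1}y_{n+1}\,\partial_{x_j}$ (acting on the multiaffine monomial), and each ascent top $j$ contributes $x_{n+1}y_{n+1}\,\partial_{y_j}$. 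Since every gap is either an ascent or a descent, the recurrence is
\[
 \mathcal E_{n+1}=x_{n+1}y_{n+1}\sum_{j=1}^n\Bigl(\partial_{x_j}+\partial_{y_j}\Bigr)\mathcal E_n + (\text{boundary terms}).
\]
The boundary needs care. With $\pi_0=\pi_{n+1}=0$, every gap is an adjacent pair $(\pi_i,\pi_{i+1})$ for $0\le i\le n$, and each has a top $\max(\pi_i,\pi_{i+1})$ that is a nonzero letter. So the boundary pairs are already covered as ascents or descents, and no separate terms arise. I would verify this formula on $n=1,2$: $\mathcal E_1=x_1y_1$, and $\mathcal E_2=x_2y_2(x_1+y_1)$ from permutations $12$ and $21$, which matches.

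The stability step uses that $\mathcal E_n$ is multiaffine in each of $x_j,y_j$. The operator $T=\sum_j(\partial_{x_j}+\partial_{y_j})$ is a directional derivative. If $f$ is real stable, then for real $s$ the polynomial $f(\mathbf x+s\mathbf 1,\mathbf y+s\mathbf 1)$ is real stable in all variables together with $s$. Its $s$-derivative at $s=0$ is $Tf$, and derivatives and real specialisations preserve real stability, so $Tf$ is real stable or zero. Alternatively one can invoke the Borcea--Brändén criterion, since $\partial_{x_j}$ preserves stability and sums of partials in the all-ones direction do as well. Multiplying by $x_{n+1}y_{n+1}$, which involves new variables, preserves stability.

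Finally, $T\mathcal E_n\neq 0$ because its coefficients are positive. The result follows by induction.

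The main obstacle is the bookkeeping in the first step: confirming that each insertion removes exactly one factor, namely the one belonging to the top of the gap. This requires checking that $\pi_i$ keeps its ascent-top status when $n+1$ follows it, and that $\pi_{i+1}$ keeps its descent status. The check is a short case analysis.
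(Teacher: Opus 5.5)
Your proof is correct. The paper gives no proof of this statement; it only quotes it from Haglund--Visontai. Your route is the standard argument, essentially the one in the cited source. It has two ingredients. The first is the insertion recurrence
\[
 \mathcal E_{n+1}=x_{n+1}y_{n+1}\sum_{j=1}^{n}\bigl(\partial_{x_j}+\partial_{y_j}\bigr)\mathcal E_n ,
\]
which rests on a bijection between the $n+1$ gaps of $0\pi_1\cdots\pi_n0$ and the $n+1$ variables in the multiaffine monomial of $\pi$: a descent gap corresponds to $x_{\pi_i}$, an ascent gap to $y_{\pi_{i+1}}$. The second is that the all-ones directional derivative, namely the $s$-derivative of $f(\mathbf x+s\mathbf 1,\mathbf y+s\mathbf 1)$ at $s=0$, preserves real stability. (I checked the recurrence against a direct enumeration for $n=3$.)

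This is a simpler instance of the mechanism behind Theorem~\ref{stab:general}. There the new variable $z_n$ multiplies a different operator from the one acting alone on the old variables, which is why the normalisation is needed. Here the new variables factor out as $x_{n+1}y_{n+1}$, so no balancing is required.

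Some presentation fixes:
\begin{enumerate}
\item Your first bullet says $\pi_i$ ``was a descent top and remains one.'' It does not remain one: being followed by $n+1$ is exactly what destroys its descent and removes $x_{\pi_i}$. Your ``tracking carefully'' sentence is right, so delete the first one.
\item Add the trivial companion checks that the non-top neighbour keeps its non-status. In a descent gap, $\pi_{i+1}$ was not an ascent top and still is not. In an ascent gap, $\pi_i$ was not a descent top and still is not.
\item Drop the ``(boundary terms)'' placeholder, since you show they vanish.
\item Drop the ``alternatively'' justification. It is circular, because sums of stability preservers need not preserve stability. For example, $\partial_{x_1}$ and $-\partial_{x_2}$ are both preservers, but their sum sends $x_1x_2$ to $x_2-x_1$, which vanishes at $x_1=x_2=\sqrt{-1}$. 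The shift-by-$s$ argument is complete on its own.
\end{enumerate}
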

The terminal descent is included in this convention, so setting
$x_i=t$ and $y_i=1$ gives $tA_n(t)$.

The Eulerian numbers $A(n,k)$, which are the coefficients of $A_n(t)$,
satisfy
\[
 A(n,k)=(k+1)A(n-1,k)+(n-k)A(n-1,k-1)
 \qquad(n\ge2),
\]
with $A(1,0)=1$ and zero entries outside $0\le k\le n-1$.
In~\cite{Shankar2026}, an $\ell$-parametrised generalisation was
defined by raising the two recurrence coefficients to the positive
integer power $\ell$.  These numbers count $\ell$-tuples of
subexceedant functions with a common block-leader set.
Shankar proved palindromicity and log-concavity, established
gamma-positivity for $\ell=2$, and proposed real-rootedness and
gamma-positivity for the remaining powers.

We write $A_{n,\ell}(t)=\sum_k A_\ell(n,k)t^k$, where
\[
 A_\ell(n,k)=(k+1)^\ell A_\ell(n-1,k)
             +(n-k)^\ell A_\ell(n-1,k-1),\qquad A_\ell(1,0)=1,
\]
and entries outside $0\le k\le n-1$ are zero.  Palindromicity gives
the unique expansion
\begin{equation}\label{alg:gamma-expansion}
 A_{n,\ell}(t)=\sum_{j=0}^{\lfloor(n-1)/2\rfloor}
 \gamma_{n,\ell,j}t^j(1+t)^{n-1-2j}.
\end{equation}
Write $\Gamma_{n,\ell}(u)=\sum_j\gamma_{n,\ell,j}u^j$ for its
gamma-polynomial.  The following univariate results are due to
Alexandersson \cite{Alexandersson2026}.

\begin{unnumtheorem}[{Alexandersson~\cite[Theorem~5.1]{Alexandersson2026}}]
For positive integers $n$ and $\ell$, both $A_{n,\ell}$ and
$\Gamma_{n,\ell}$ are real-rooted, and $\Gamma_{n,\ell}$ has nonnegative
coefficients.  The rows $A_{n,\ell}$ and $A_{n+1,\ell}$ weakly interlace.
\end{unnumtheorem}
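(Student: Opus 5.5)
We prove the three assertions of Alexandersson's theorem in the order: real-rootedness of $A_{n,\ell}$ together with weak interlacing of consecutive rows, then the statements about $\Gamma_{n,\ell}$. The natural framework is the theory of triangular recurrences $P_n(t)=\sum_k\bigl(a_{n,k}P_{n-1,k}+b_{n,k}P_{n-1,k-1}\bigr)t^k$. Write $x=k+1$ for the first coefficient and note that $n-k=n-1-(k-1)$. The recurrence is then, in operator form,
\[
A_{n,\ell}(t)=\sum_k A_\ell(n-1,k)\bigl((k+1)^\ell t^k+(n-1-k)^\ell t^{k+1}\bigr).
\]

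\textbf{Real-rootedness and interlacing.} We proceed by induction on $n$ using compatible families. Let $T_n$ be the linear map sending $t^k$ to $(k+1)^\ell t^k+(n-1-k)^\ell t^{k+1}$ on polynomials of degree at most $n-2$. For $\ell=1$ this is $T_n=(1+(n-2)t)+t(1-t)D$, a classical real-rootedness preserver. For general $\ell$ we factor $T_n$ as a composition of $\ell$ simpler maps, which is the key point. On monomials, $(k+1)^\ell$ comes from applying $\ell$ times the diagonal operator $\theta=D t$ (so $t^k\mapsto(k+1)t^k$). Likewise $(n-1-k)^\ell t^{k+1}$ comes from $t$ times the $\ell$-th power of the diagonal operator $t^k\mapsto(n-1-k)t^k$, i.e.\ $(n-1)-tD$. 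The sum of two diagonal powers is not itself a product, so instead we embed the problem in a multivariate one. We introduce variables $x_1,\dots,x_\ell$ and define $T_n$ as the diagonalisation of a product of $\ell$ multiaffine stability preservers, each of the form $x_i^k\mapsto(k+1)x_i^k+(n-1-k)x_i^{k+1}$ in a separate variable. Concretely, one tracks an $\ell$-variate polynomial whose restriction to the diagonal $x_1=\dots=x_\ell$ is not what we want, since powers of coefficients arise from $\ell$ independent choices in a synchronised lattice path.

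Realistically, the cleaner route is the Haglund--Visontai/Brändén method. We show that the linear operator $T_n$ maps the cone of polynomials with nonnegative coefficients that are real-rooted and of degree at most $n-2$ into real-rooted polynomials, and that it preserves interlacing. By Borcea--Brändén it suffices to check that the symbol $G_n(t,s)=\sum_k\binom{n-2}{k}T_n(t^k)s^{n-2-k}$, or the equivalent Hadamard-type criterion, is stable. Since $T_n$ is a multiplier sequence in $k$ combined with a shift, an alternative is to show that $k\mapsto(k+1)^\ell$ and $k\mapsto(n-1-k)^\ell$ are obtained by iterating the $\ell=1$ multiplier sequences, each of which is a Pólya--Schur-type diagonal preserver on polynomials of degree at most $n-2$. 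With that in hand, we use the fact that a nonnegative combination $f+tg$ of real-rooted $f,g$ is real-rooted when $g$ interlaces $f$ appropriately. Then $f=(\theta)^\ell A$ and $g=((n-1)-tD)^\ell A$ both interlace $A=A_{n-1,\ell}$, by iterated Rolle-type interlacing with all zeros nonpositive. This gives that $A_{n,\ell}$ is real-rooted and that $A_{n-1,\ell}$ weakly interlaces it. The main obstacle is this last compatibility step: we must show that $f$ and $tg$ have a common interlacer. We expect to use $A_{n-1,\ell}$ itself as that common interlacer, which follows because each application of $\theta$ and of $(n-1)-tD$ preserves the property ``is interlaced by $A$'' for nonpositive-rooted polynomials. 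The interlacing property then propagates by induction.

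\textbf{Gamma statements.} Palindromicity together with real-rootedness and nonnegative coefficients already implies that $\Gamma_{n,\ell}$ is real-rooted with nonnegative coefficients. The reason is that the zeros of a palindromic real-rooted polynomial with positive coefficients pair up as $r,1/r$ with $r<0$. Each such pair contributes a factor $(t-r)(t-1/r)=(1+t)^2-(2+r+1/r)\,t$, and here $-(2+r+1/r)\ge 0$. Hence $A_{n,\ell}(t)=c\,(1+t)^{m}\prod_i\bigl((1+t)^2+\beta_it\bigr)$ with every $\beta_i\ge0$. Consequently $\Gamma_{n,\ell}(u)=c\prod_i(1+\beta_iu)$, which is real-rooted with nonnegative coefficients. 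Palindromicity is known from \cite{Shankar2026}.
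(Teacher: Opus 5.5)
Your gamma argument is correct. Pairing the zeros $r,1/r$ with $r<0$ gives $\Gamma_{n,\ell}(u)=\prod_i(1+\beta_iu)$ with $\beta_i\ge0$, which proves directly what the paper obtains from Br\"and\'en's criterion and the citation to Alexandersson. It presupposes real-rootedness of $A_{n,\ell}$, however, and that part has a genuine gap.

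Write $\theta=t\,d/dt$, $A=A_{n-1,\ell}$ with zeros $\alpha_1<\cdots<\alpha_d<0$, $U=(\theta+1)^\ell A$ and $V=(n-1-\theta)^\ell A$ (your $Dt$ is $\theta+1$), so that $A_{n,\ell}=U+tV$. Everything rests on $A$ being a common interlacer of $U$ and $tV$. You justify this by claiming that each application of $\theta+1$ or $n-1-\theta$ preserves the property of being interlaced by $A$, and that is false. For $A=(t+1)(t+2)$, the polynomial $(\theta+1)^2A=9t^2+12t+2$ has zeros $\approx-1.14,-0.20$ and is still interlaced by $A$. But $(\theta+1)^3A=27t^2+24t+2$ has both zeros ($\approx-0.80,-0.09$) in $(-1,0)$.

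Iterating Lemma~\ref{int:root-movement} only yields the one-sided bounds $v_i<\alpha_i<u_i$. Nothing stops the $i$th zero of $U$ from passing $\alpha_{i+1}$; indeed, for fixed $A$ all zeros of $(\theta+1)^\ell A$ tend to $0$ as $\ell\to\infty$. Likewise nothing stops the $i$th zero of $V$ from passing $\alpha_{i-1}$. The missing bounds $u_{i-1}<\alpha_i<v_{i+1}$ do hold for the Eulerian rows, but only because of their special structure. Your other suggestions do not supply them: the diagonal-multiplier observation gives real-rootedness of $U$ and $V$ separately, which says nothing about $U+tV$, and the Borcea--Br\"and\'en symbol of $T_n$ is announced but never analysed.

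The missing idea is the binomial normalisation, which is exactly what makes your first attempt at factoring $T_n$ succeed. Dividing the degree-$k$ coefficient by $\binom{n-1}{k}^\ell$ balances the two steps, so the transition becomes a genuine product,
$P_{n,\ell}=(1+z_n)(n-1)^{-\ell}[(\Theta+1)(n-1-\Theta)]^\ell P_{n-1,\ell}$.
Each factor preserves stability by Lemma~\ref{stab:euler-operators}. The finite multiplier $\binom{n-1}{k}$ of Lemma~\ref{stab:binomial-operator}, applied $\ell$ times, then restores $F_{n,\ell}$, and this is where real-rootedness of $A_{n,\ell}$ actually comes from.

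The specialisation $F_{n,\ell}(t,\ldots,t,z)=U(t)+zV(t)$ is therefore real stable, so Hermite--Biehler gives $v_1\le u_1\le v_2\le\cdots\le v_d\le u_d$. Combined with $v_i<\alpha_i<u_i$, this yields $u_{i-1}\le v_i<\alpha_i<u_i\le v_{i+1}$, which is precisely your common-interlacer property. Your sign count then gives interlacing of consecutive rows, in fact strict interlacing.
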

Alexandersson's proof divides the degree-$k$ coefficient by $\binom{n-1}{k}^{\ell}$ and restores the original coefficients by a Hadamard-product argument. A similar normalisation argument also appears in \cite{Shankar2025}.  Our main stability theorem extends this normalisation to powered affine recurrences and a polynomial that keeps track of a separate variable for each cross-step height.
For the Eulerian specialisation, we also prove simplicity and strict
interlacing.  The tree actions and refinement posets provide
combinatorial structures for these polynomial families.

Stability criteria for recursive polynomials and their interlacing
consequences were developed by Ding and Zhu~\cite{DingZhu2024}.
Yang and Zhang~\cite{YangZhang2025} constructed stable refinements of
Narayana and second-order Eulerian polynomials using labelled plane
trees.  Our variables record cross-step heights in the insertion
history.  The six sections below develop the relevant combinatorial models,
the general stability theorem, and its Eulerian and Stirling applications.
We state the main results here.

A subexceedant function $f:[n]\to[n]$ is a function that satisfies $f(i)\le i$. Its kernel is the partition into classes sharing the same image, and $\bl(f)$ is the set of their minima.  Let $\mathcal Q_{n,\ell}$ consist of
$\ell$-tuples with a common set $B$ of block minima, ordered by proper
kernel refinement in every coordinate, with equality added.
Let $\mathcal T_{n,\ell}$ consist of $\ell$-tuples of increasing plane
binary trees on $[n]$ with a common set $L$ of left-child labels.

In Section \ref{sec:preliminaries}, we construct the bijection that takes the tuples of subexceedant functions counted by parametrised Eulerian numbers to tuples of synchronised increasing plane binary trees. Additionally, we also give a natural poset on these objects.
\begin{maintheorem}[Combinatorial models]\label{main:models}
\label{tree:kernel-bijection}\label{poset:grading}
\begin{enumerate}[label=\textup{(\roman*)}]
\item There is a bijection $\Phi_n:\mathcal Q_{n,1}\to\mathcal T_{n,1}$
under which the blocks of $\ker(f)$ are the vertex sets of the maximal
right chains of $\Phi_n(f)$.  In particular,
$L(\Phi_n(f))=\bl(f)\setminus\{1\}$.  Its coordinatewise extension
is a bijection $\mathcal Q_{n,\ell}\to\mathcal T_{n,\ell}$, and
\begin{equation}\label{tree:enumerator}
 A_{n,\ell}(t)=\sum_{\mathbf T\in\mathcal T_{n,\ell}}t^{|L(\mathbf T)|}.
\end{equation}
\item The poset $\mathcal Q_{n,\ell}$ is bounded and graded of rank
$n-1$, with rank function $\rho(\mathbf f)=|B(\mathbf f)|-1$
and rank-generating polynomial $A_{n,\ell}(t)$.
\end{enumerate}
\end{maintheorem}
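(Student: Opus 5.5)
The plan is to build $\Phi_n$ by simultaneous insertion, proving the bijection and the enumerator \eqref{tree:enumerator} together by induction on $n$. Both $\mathcal Q_{n,1}$ and $\mathcal T_{n,1}$ have $n!$ elements, and both satisfy the same insertion recurrence. In $\mathcal Q_{n,1}$, let $f$ be a subexceedant function on $[n]$ and $f'=f|_{[n-1]}$, and suppose $\ker(f')$ has $k$ blocks, i.e.\ $|f'([n-1])|=k$. Then either $f(n)$ lies in the image of $f'$, which gives $k$ choices and adds $n$ to an existing block, or $f(n)$ is one of the $n-k$ values of $[n]$ outside that image, which creates the new block $\{n\}$. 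In $\mathcal T_{n,1}$, an increasing binary tree $T'$ on $[n-1]$ has $n$ empty slots. Its maximal right chains each begin at the root or at a left child, so with $k$ chains it has $k-1$ left children. Each chain ends in exactly one empty right slot, giving $k$ empty right slots, and therefore $n-k$ empty left slots. Inserting $n$ into an empty right slot lengthens a chain; inserting it into an empty left slot creates the new chain $\{n\}$. The two counts agree, and both reproduce $A(n,k-1)=(k)A(n-1,k-1)+(n-k+1)A(n-1,k-2)$ in the index $|B|-1$.

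\emph{Defining $\Phi_n$.} Put $\Phi_1(f)$ equal to the one-vertex tree, and define $\Phi_n(f)$ from $T'=\Phi_{n-1}(f')$ as follows.
\begin{itemize}
\item If $f(n)=c$ lies in the image of $f'$, append $n$ as a right child at the bottom of the chain whose vertex set is the block $f'^{-1}(c)$. By induction such a chain exists.
\item Otherwise, list the $n-k$ missing values in increasing order and list the empty left slots of $T'$ in increasing order of their parent labels. Insert $n$ into the slot matched to $f(n)$ by this order-preserving correspondence.
\end{itemize}
Both matchings are bijections, so $\Phi_n$ is a bijection. Blocks are then carried to chains, since an old block grows by its chain's new bottom and a new singleton block becomes a new chain. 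Because the tree is increasing, each chain's minimum is its top vertex. That top is the root (label $1$) for exactly one chain and a left child for every other chain, so $L(\Phi_n(f))=\bl(f)\setminus\{1\}$.

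\emph{Extension to $\ell$-tuples.} Applying $\Phi_n$ in each coordinate sends a common block-minimum set $B$ to a common left-child set $B\setminus\{1\}$, and conversely, so the coordinatewise map is a bijection $\mathcal Q_{n,\ell}\to\mathcal T_{n,\ell}$. For \eqref{tree:enumerator}, I would count tuples in $\mathcal T_{n,\ell}$ by the synchronised insertion of $n$. Either $n$ is a right child in every coordinate, which gives $k^\ell$ choices and keeps $|L|$ fixed, or $n$ is a left child in every coordinate, which gives $(n-k)^\ell$ choices and raises $|L|$ by one. This reproduces the defining recurrence for $A_\ell(n,|L|)$. One point needs checking: whether $n$ becomes a left child does not depend on the coordinate, because $n\in L$ in every coordinate or in none, by synchronisation.

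\emph{Part (ii).} Every set partition of $[n]$ is the kernel of some subexceedant function: send each block to its own minimum. Moreover, the partition into singletons forces $f=\mathrm{id}$, and the one-block partition forces $f\equiv1$. Hence $(\mathrm{id},\dots,\mathrm{id})$ and $(1,\dots,1)$ are the unique top and bottom elements. If $\ker g$ is coarser than $\ker f$, then $B(g)\subseteq B(f)$, and proper refinement makes the inclusion strict, so $\rho$ is strictly monotone. Gradedness then reduces to one claim: whenever $\mathbf g<\mathbf f$ with $|B(\mathbf f)|-|B(\mathbf g)|\ge2$, there is an element strictly between them.

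This claim is the step I expect to be the main obstacle, because the intermediate element must keep a common minima set across all coordinates. My construction is to pick $b\in B(\mathbf f)\setminus B(\mathbf g)$. In each coordinate $i$, merge the $\ker f_i$-block with minimum $b$ into the $\ker f_i$-block containing the minimum of the $\ker g_i$-block that contains $b$; this second block has minimum smaller than $b$. The resulting partition lies between $\ker f_i$ and $\ker g_i$, and its minima set is $B(\mathbf f)\setminus\{b\}$ in every coordinate, which is common. Realising each merged partition by any subexceedant function gives the intermediate tuple. It follows that covers raise $\rho$ by exactly one, so $\mathcal Q_{n,\ell}$ is graded of rank $n-1$. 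Its rank-generating function counts tuples by $|B|-1=|L|$, which is $A_{n,\ell}(t)$ by part (i).
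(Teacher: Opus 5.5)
Your proposal is correct and follows the paper's proof. Your insertion rule is exactly the paper's map $\Phi_n$ read incrementally: when $n=b_j$ is a new block minimum, the missing values are the paper's $I_j$ and the vertices with an empty left slot are its $J_j$, and both constructions match them in increasing order. Part (ii) is the same gap-filling argument, except that you merge a block of the finer tuple to remove $b$ from the minima set, whereas the paper splits a block of the coarser tuple to add $v$. Your separate recurrence check of \eqref{tree:enumerator} on synchronised tree tuples is redundant, since it already follows from the bijection and Proposition~\ref{prelim:tuple-enumeration}, but it is harmless.
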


Our next main result is a stability theorem. The main stability theorem concerns the affine triangular recurrences from
\cite{Shankar2026}.  Put
\[
 c(n,k)=\alpha n+\beta k+\gamma,\qquad
 d(n,k)=\alpha'n+\beta'k+\gamma',
\]
and let \[T_\ell(n,k)=c(n,k)^\ell T_\ell(n-1,k)
+d(n,k)^\ell T_\ell(n-1,k-1)\] start with $T_\ell(n_0,k_0)=1$.
Use the paths of Shankar~\cite[Section~2]{Shankar2026} from
$(k_0,n_0)$ with north steps $(0,1)$ and cross steps $(1,1)$. Let $\mathcal{P}_n$ be the set of paths with north and cross steps that starts at $(k_0,n_0)$ and ends at height $n$. A step ending at $(k,i)$ has weight
$c(i,k)$ or $d(i,k)$, respectively.  Write $\wt(p)$ for their product
and $C(p)$ for the set of heights where the cross steps occur. Different labeling conventions for height do not affect the stability result. With
\[
 a_n(j)=c(n,k_0+j),\quad b_n(j)=d(n,k_0+j+1),\quad
 w_n(j)=\prod_{h=0}^{j-1}\frac{b_n(h)}{a_n(h)},
\]
define the two cross-step refinements
\[
 \mathcal F_{n,\ell}=\sum_p \wt(p)^\ell\prod_{i\in C(p)}z_i,
 \qquad
 \mathcal P_{n,\ell}=\sum_p
 \frac{\wt(p)^\ell}{w_n(|C(p)|)^\ell}\prod_{i\in C(p)}z_i,
\]
where the sums run over paths to height $n$ and both initial polynomials
are $1$.

\begin{maintheorem}[Stability of powered affine recurrences]\label{stab:general}
Assume
\[
 \alpha,\alpha',\beta\ge0,\qquad -\alpha'\le\beta'\le0,
\]
and $a_n(j),b_n(j)>0$ for $n>n_0$ and $0\le j\le n-n_0-1$.
Suppose also that
\[
 \alpha/\beta\in\mathbb Z_{\ge0}\quad\hbox{if }\beta>0,
 \qquad
 \alpha'/(-\beta')\in\mathbb Z_{\ge1}\quad\hbox{if }\beta'<0.
\]
For every integer $\ell\ge1$ and $n\ge n_0$, both
$\mathcal P_{n,\ell}$ and $\mathcal F_{n,\ell}$ are real stable.
In particular, $\sum_{j=0}^{n-n_0}T_\ell(n,k_0+j)t^j$ has only
negative real zeros.
\end{maintheorem}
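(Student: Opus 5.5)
We argue by induction on $n$, using the obvious last-step decomposition of lattice paths. Let $j=|C(p)|$ be the number of cross steps of a path $p$ to height $n-1$. Its last point is then $(k_0+j,n-1)$. A north step to height $n$ contributes $c(n,k_0+j)=a_n(j)$, and a cross step contributes $d(n,k_0+j+1)=b_n(j)$ together with a factor $z_n$. Write $f^{(j)}$ for the part of a multiaffine polynomial $f$ of total degree $j$. Then
\[
 \mathcal F_{n,\ell}=\Lambda_n(\mathcal F_{n-1,\ell})+z_n\,M_n(\mathcal F_{n-1,\ell}),
\]
where $\Lambda_n,M_n$ are diagonal operators multiplying $f^{(j)}$ by $a_n(j)^\ell$ and $b_n(j)^\ell$ respectively. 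Normalising by $w_n(j)^\ell$ gives the parallel recurrence
\[
 \mathcal P_{n,\ell}=\widetilde\Lambda_n(\mathcal P_{n-1,\ell})+z_n\widetilde M_n(\mathcal P_{n-1,\ell}).
\]
Here the multipliers are $\lambda_n(j)^\ell$ and $\mu_n(j)^\ell$, with
\[
 \lambda_n(j)=a_n(j)\frac{w_{n-1}(j)}{w_n(j)},\qquad
 \mu_n(j)=b_n(j)\frac{w_{n-1}(j)}{w_n(j+1)}=\lambda_n(j)\,\frac{b_n(j)}{a_n(j)}\cdot\frac{w_n(j)}{w_n(j+1)}\cdot\frac{a_n(j)}{\lambda_n(j)}\cdot\frac{1}{1}.
\]
Simplifying, $\mu_n(j)=a_n(j)\,w_{n-1}(j)/w_n(j)$, so $\mu_n(j)=\lambda_n(j)$. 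The normalisation is designed exactly so that north and cross steps receive the same degree multiplier. Hence
\[
 \mathcal P_{n,\ell}=(1+z_n)\,\widetilde\Lambda_n(\mathcal P_{n-1,\ell}),
\]
with the caveat that the degree is read before the cross step is taken. I will recheck this bookkeeping carefully; if the two multipliers differ by a degree shift, the same argument applies to each summand separately.

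The stability question then reduces to diagonal operators acting on multiaffine polynomials according to total degree. For these I would invoke the Borcea--Brändén characterisation. A degree-diagonal operator $f^{(j)}\mapsto\theta(j)f^{(j)}$ on multiaffine polynomials in $m$ variables preserves real stability, or sends stable polynomials to $0$, provided $\{\theta(j)\}$ is a multiplier sequence in the sense of Pólya--Schur. Equivalently, this holds provided the symbol $\sum_j\theta(j)\binom{m}{j}t^j$ has only real zeros of one sign. Multiplication by $(1+z_n)$ clearly preserves stability. So it suffices to show that, for each $n$, the sequences $j\mapsto\lambda_n(j)^\ell$ are multiplier sequences, and similarly $j\mapsto w_n(j)^\ell$, which recovers $\mathcal F_{n,\ell}$ from $\mathcal P_{n,\ell}$. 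By the Schur--Malo theorem, the Hadamard product of multiplier sequences is again a multiplier sequence. It therefore suffices to treat $\ell=1$ and then take $\ell$th powers.

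The main obstacle is the case $\ell=1$, i.e.\ showing that $\lambda_n(j)$ and $w_n(j)$ are multiplier sequences in $j$; this is where the hypotheses enter. Write
\[
 \frac{w_{n-1}(j)}{w_n(j)}=\prod_{h<j}\frac{b_{n-1}(h)\,a_n(h)}{a_{n-1}(h)\,b_n(h)}.
\]
We have $a_n(h)=\alpha n+\beta(k_0+h)+\gamma$, so $a_n(h)=a_{n-1}(h+\alpha/\beta)$ when $\beta>0$ and $\alpha/\beta\in\mathbb Z_{\ge0}$. Likewise $b_n(h)=b_{n-1}(h-\alpha'/(-\beta'))$ when $\beta'<0$ and $\alpha'/(-\beta')\in\mathbb Z_{\ge1}$. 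Under these conditions the products telescope. The result is a ratio of finitely many linear factors in $j$, i.e.\ a rational function built from Pochhammer-type quotients. I then need to check that this telescoped form is a polynomial in $j$ whose zeros are real and avoid the relevant range; the sign conditions $\alpha,\alpha',\beta\ge0$ and $-\alpha'\le\beta'\le0$ should give nonpositive zeros in $j$, or factors $(N-j)$ that vanish exactly beyond the support. By Laguerre's theorem such polynomial sequences, and truncations like $(N-j)_+$, are multiplier sequences. The cases $\beta=0$ or $\beta'=0$ are degenerate: the corresponding factor is independent of $h$ and contributes a geometric sequence, which is harmless. The same telescoping handles $w_n(j)$ itself, which is a ratio of products of $b$'s and $a$'s. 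I expect the delicate point to be confirming that positivity of $a_n,b_n$ on the support keeps every factor nonnegative there.

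Given this, induction from $\mathcal P_{n_0,\ell}=\mathcal F_{n_0,\ell}=1$ proves both stability claims. For the univariate consequence, set all $z_i=t$ in $\mathcal F_{n,\ell}$. The diagonal specialisation of a real stable polynomial is real stable in one variable, hence real-rooted, and it equals $\sum_j T_\ell(n,k_0+j)t^j$. Its coefficients are positive, so every zero is negative.
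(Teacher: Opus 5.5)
There is a genuine gap in the step from $\mathcal P_{n,\ell}$ to $\mathcal F_{n,\ell}$. The univariate claim inherits it, since you deduce it from $\mathcal F_{n,\ell}$. The multiplier $w_n(j)=\prod_{i<j}b_n(i)/a_n(i)$ involves only one height, so nothing telescopes. Write $a_n(i)=\beta(i+A_n)$ and $b_n(i)=-\beta'(C_n-i)$. Then
\[
 w_n(j)=\left(\frac{-\beta'}{\beta}\right)^{j}\frac{C_n^{\underline j}}{(A_n)_j},
\]
which is a ratio of Pochhammer symbols, not a polynomial in $j$, so Laguerre's theorem says nothing about it. In the Eulerian case it is $\binom{n-1}{j}$. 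What must be shown is that the Jensen polynomial $\sum_{j=0}^m\binom{m}{j}w_n(j)t^j={}_2F_1(-m,-C_n;A_n;qt)$, with $m=n-n_0$, has only negative zeros when $A_n>0$ and $C_n>m-1$. For the Eulerian family this means real-rootedness of $\sum_j\binom{n-1}{j}^2t^j$. This is a genuine special-function input. The paper supplies it in Lemma~\ref{stab:hypergeometric}, via Driver--Jordaan or the Jacobi representation \eqref{stab:jacobi-symbol} with parameters $A_n-1>-1$ and $C_n-m>-1$. The one-slope cases $q^j/(A_n)_j$ and $q^jC_n^{\underline j}$ are obtained as limits; their Jensen polynomials are (reversed) Laguerre polynomials. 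Without this ingredient your argument proves stability of $\mathcal P_{n,\ell}$ only.

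For $\mathcal P_{n,\ell}$ itself your route is the paper's. The balancing $\mu_n=\lambda_n$ is correct, although your displayed chain contains a spurious factor $a_n(j)/\lambda_n(j)$. Your proposed fallback of treating the two summands separately would not have worked, since sums of stable polynomials need not be stable. Your telescoping reproduces \eqref{stab:factored-transition}. The multiplier-sequence language needs correcting, though. The factors $C_{n-1}-j+i$ are not Pólya--Schur multiplier sequences, and $(N-j)_+$ generally is not one either. For $(3,2,1,0,\dots)$ the Jensen polynomial $3+2dt+\binom{d}{2}t^2$ has negative discriminant once $d\ge4$. What you need is the finite criterion relative to the degree bound $d=\deg\mathcal P_{n-1,\ell}=n-1-n_0$. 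The Jensen polynomial of $D-\Theta$ is $(1+t)^{d-1}\bigl(D+(D-d)t\bigr)$, which is admissible exactly when $D\ge d$. Here $D\ge C_{n-1}+1>d$ follows from $b_{n-1}(d-1)>0$. With that bookkeeping, as in Lemma~\ref{stab:euler-operators}, and with $\ell$-fold composition, this half goes through.
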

The proof in Section~\ref{stab:section} balances the two path extensions
and applies finite P\'olya--Schur theory and hypergeometric zero
locations.  Table~\ref{stab:applications} lists the covered sequence
families. We also remark on some extensions to general triangular arrays.

For the Eulerian recurrence, write $F_{n,\ell}$ for the cross-step
refinement defined above with initial vertex $(0,1)$. Thus, a variable $z_i$ marks a common block minimum $i>1$.  Write $P_{n,\ell}$ for the polynomial
obtained by dividing its degree-$k$ coefficients by $\binom{n-1}{k}^\ell$,
and put $\widehat A_{n,\ell}(t)=P_{n,\ell}(t,\ldots,t)$.

\begin{maintheorem}[Eulerian stability and strict interlacing]\label{main:eulerian}
\label{stab:main}\label{alg:real-rootedness}\label{int:main}
\label{int:row-pencil}
For all positive integers $n,\ell$, the following assertions hold.
\begin{enumerate}[label=\textup{(\roman*)}]
\item Both $F_{n,\ell}$ and $P_{n,\ell}$ are real stable.
The polynomials $A_{n,\ell}$, $\widehat A_{n,\ell}$, and
$F_{n,\ell}(\lambda_2t,\ldots,\lambda_nt)$ for $\lambda_i>0$
have only negative real zeros.  The common block-minimum set of a
uniformly chosen synchronised tuple, with $1$ removed, has a strongly
Rayleigh distribution.
\item For fixed $\ell$, $A_{n,\ell}$ has simple zeros for $n\ge2$.
For $n\ge2$, if $\alpha_1<\cdots<\alpha_{n-1}$ and $\beta_1<\cdots<\beta_n$
are the zeros of $A_{n,\ell}$ and $A_{n+1,\ell}$, then
\begin{equation}\label{int:strict-chain}
 \beta_1<\alpha_1<\beta_2<\cdots<\beta_{n-1}<\alpha_{n-1}<\beta_n<0.
\end{equation}
\item The polynomial $A_{n+1,\ell}(t)+zA_{n,\ell}(t)$ is real stable.
For every $s\in\mathbb R$, $A_{n+1,\ell}+sA_{n,\ell}$ has simple
real zeros.  All are negative if $s>-1$; one is zero if $s=-1$;
and exactly one is positive if $s<-1$.
\end{enumerate}
\end{maintheorem}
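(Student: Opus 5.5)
Part (i) follows from \cref{stab:general}. The Eulerian recurrence $A_\ell(n,k)=(k+1)^\ell A_\ell(n-1,k)+(n-k)^\ell A_\ell(n-1,k-1)$ is the instance $\alpha=0,\beta=1,\gamma=1$ and $\alpha'=1,\beta'=-1,\gamma'=0$, with initial vertex $(k_0,n_0)=(0,1)$. Then $\alpha/\beta=0\in\mathbb Z_{\ge0}$ and $\alpha'/(-\beta')=1\in\mathbb Z_{\ge1}$, so the integrality conditions hold.

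Positivity is also immediate. We have $a_n(j)=j+1>0$ and $b_n(j)=n-j-1$, which is positive for $0\le j\le n-2$. Next I would compute $w_n(j)=\prod_{h<j}(n-1-h)/(h+1)=\binom{n-1}{j}$. This identifies $\mathcal P_{n,\ell}$ with $P_{n,\ell}$ and $\mathcal F_{n,\ell}$ with $F_{n,\ell}$, so both are real stable.

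The univariate consequences follow by specialising variables to $t$ or to $\lambda_it$ with $\lambda_i>0$, which preserves stability. A real stable univariate polynomial with nonnegative coefficients is real-rooted with nonpositive zeros. Zero is excluded because the constant term is $1$: it comes from the all-north path.

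For the strongly Rayleigh statement, \eqref{tree:enumerator} and \cref{main:models}(i) show that $F_{n,\ell}$ is the multiaffine generating polynomial of the common block-minimum set with $1$ removed, taken over synchronised tuples. Indeed, each cross step at height $i$ corresponds to adjoining $i$ to $B$, and the path weight $\wt(p)^\ell$ counts the tuples with that $B$. Normalising by the total count gives a strongly Rayleigh distribution, by the Borcea--Brändén--Liggett definition.

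For (ii) and (iii), I would first establish weak interlacing. Since $F_{n+1,\ell}=F_{n,\ell}\cdot(\text{north factors})+z_{n+1}(\cdots)$ with positive factors, stability of $F_{n+1,\ell}$ in the extra variable, specialised at $z_i=t$ for $i\le n$, gives the real stability of $A_{n+1,\ell}+zA_{n,\ell}$. A more direct route uses the standard operator form
\[
A_{n+1,\ell}(t)=\bigl(D_1^\ell+t\,D_2^\ell\bigr)A_{n,\ell},
\]
where $D_1$ acts by $t^k\mapsto(k+1)t^k$ and $D_2$ by $t^k\mapsto(n-k)t^k$. Stability of the pencil is then equivalent, by Hermite--Kakeya--Obreschkoff, to $A_{n,\ell}\preceq A_{n+1,\ell}$ (weak interlacing), which Alexandersson already gives.

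To upgrade to strict interlacing and simple zeros, I would argue by induction on $n$. Suppose $\beta$ is a common zero of $A_{n,\ell}$ and $A_{n+1,\ell}$. Then the recurrence forces a relation between $A_{n,\ell}$ and its predecessor at $\beta$. I would try to show inductively that $A_{n,\ell}$ and $A_{n-1,\ell}$ have no common zero, and that consecutive rows have a nonvanishing Wronskian-type quantity $A_{n+1,\ell}'A_{n,\ell}-A_{n+1,\ell}A_{n,\ell}'$ on $(-\infty,0)$. Simplicity then follows: a double zero of $A_{n+1,\ell}$ under weak interlacing would have to be a zero of $A_{n,\ell}$, which is excluded. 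Palindromicity gives the symmetry $t\mapsto 1/t$ of the zeros, which helps locate the extreme ones.

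The main obstacle is excluding common zeros when $\ell\ge2$. For $\ell=1$ this follows from the derivative form of the recurrence, but for general $\ell$ the operator $D_1^\ell$ is not a first-order differential operator. I would first try a multivariate approach: exploit the fact that $F_{n,\ell}$ depends on $z_{n}$ nontrivially and that stability together with multiaffinity gives strict inequalities in the Rayleigh sense. Specialising to the diagonal yields a strictly positive Wronskian away from degenerate points, and hence strict interlacing.

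Finally, (iii) follows from strict interlacing and the sign of $s$. Nonnegativity of the coefficients and the values at $t=0$ ($A_{n+1,\ell}(0)+sA_{n,\ell}(0)=1+s$) show that zero is a root exactly when $s=-1$. For $s<-1$ the value at $0$ is negative while the polynomial is positive for large $t$, so there is exactly one positive root. For $s>-1$ all roots are negative. Simplicity comes from the strict interlacing of the pencil.
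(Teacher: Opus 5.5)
\textbf{Part (i)} is fine and matches the paper. Your parameters ($\alpha=0,\beta=1,\gamma=1$, $\alpha'=1,\beta'=-1,\gamma'=0$, $w_n(j)=\binom{n-1}{j}$) are exactly the Eulerian row of Table~\ref{stab:applications}.

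\textbf{Part (ii).} This is the new content of the theorem, and here your proposal has a genuine gap. You never give a mechanism that rules out a common zero of $A_{n,\ell}$ and $A_{n+1,\ell}$. The Rayleigh and Wronskian inequalities that come from real stability are only weak inequalities, and a common zero is precisely the case where they become equalities. So ``strict inequalities in the Rayleigh sense'' and ``a strictly positive Wronskian away from degenerate points'' assume what has to be proved. Alexandersson's weak interlacing, plus your remark that a double zero of $A_{n+1,\ell}$ would be a common zero, only reduces simplicity to that same unproved claim.

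Your first justification of the pencil is also wrong. Setting $z_2=\cdots=z_n=t$ in
\[
F_{n+1,\ell}=\bigl[(\Theta+1)^\ell+z_{n+1}(n-\Theta)^\ell\bigr]F_{n,\ell}
\]
gives $U(t)+zV(t)$, where $U=(\theta+1)^\ell A_{n,\ell}$ and $V=(n-\theta)^\ell A_{n,\ell}$. It does not give $A_{n+1,\ell}+zA_{n,\ell}$.

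\textbf{The missing idea.} That misidentified pencil is exactly what the paper uses. Induct on the hypothesis that $A_{n,\ell}$ has simple negative zeros $\alpha_1<\cdots<\alpha_d$, with $d=n-1$.
\begin{enumerate}[label=\textup{(\alph*)}]
\item Repeated use of Lemma~\ref{int:root-movement} gives $U$ and $V$ simple negative zeros with $v_i<\alpha_i<u_i$ strictly.
\item Stability of $U+zV$ gives the weak interlacing $v_1\le u_1\le v_2\le\cdots\le v_d\le u_d$.
\item Hence exactly $i-1$ zeros of $U$ and exactly $i$ zeros of $V$ lie left of $\alpha_i$. So $U(\alpha_i)$ and $\alpha_iV(\alpha_i)$ share the same nonzero sign $(-1)^{d-i+1}$.
\item Therefore $A_{n+1,\ell}=U+tV$ alternates in sign at the $\alpha_i$. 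Adding the signs at $-\infty$ and at $0$ places one zero in each of the $d+1$ intervals, which gives simplicity and the strict chain simultaneously.
\end{enumerate}
The multivariate stability enters exactly here: the extra variable $z_{n+1}$ separates the two outgoing-step operators.

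\textbf{Part (iii).} Your intermediate value argument for $s<-1$ only shows that there is at least one positive zero. To get ``exactly one'', and to get that all zeros are negative when $-1<s<0$, you need a further step. One option is the count placing the $n$ zeros of $A_{n+1,\ell}+sA_{n,\ell}$ one in each interval determined by the $\alpha_i$. Another is the monotonicity of $A_{n+1,\ell}/A_{n,\ell}$ between its poles. Both rest on the strict interlacing that you have not yet established.
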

Here a probability measure on subsets is \emph{strongly Rayleigh}
if its multiaffine generating polynomial is real stable~\cite{BBL2009}.
Stability in Theorem~\ref{main:eulerian}\textup{(i)} is a direct
application of Theorem~\ref{stab:general}.  Section~\ref{alg:section}
proves strict interlacing using intermediate interlacers and records
the standard gamma-positivity consequences of real-rootedness and
palindromicity.

In the next section, we focus on the case $\ell=2$. For a synchronised pair of trees, join two nonroot labels when they
are siblings in either tree.  This \emph{sibling graph} is a union
of paths and even cycles.  Call the pair \emph{reduced} when each
odd component $C$ has exactly $(|C|-1)/2$ left-child labels.
There is no restriction on the orientations of even components.

\begin{maintheorem}[Gamma coefficients for pairs]\label{main:gamma}
\label{fs:pair-action}\label{gamma:reduced-pairs}\label{gamma:recurrence}
\begin{enumerate}[label=\textup{(\roman*)}]
\item Flipping the left--right statuses of all labels in an odd
sibling component, simultaneously in both trees, defines commuting
involutions.  Each orbit has a unique reduced pair.  If this pair has
$k$ common left-child labels, the orbit enumerator is
\begin{equation}\label{fs:pair-orbit}
 \sum_{\mathbf S\in\mathcal O}t^{|L(\mathbf S)|}
 =t^k(1+t)^{n-1-2k}.
\end{equation}
Consequently, $\gamma_{n,2,k}$ counts reduced pairs with $k$ common
left-child labels, and $\gamma_{n,2,0}=1$.
\item With $\gamma_{1,2,0}=1$ and coefficients outside their support
equal to zero, for $n\ge2$ one has
\begin{equation}\label{gamma:recurrence-equation}
 \gamma_{n,2,k}=(k+1)^2\gamma_{n-1,2,k}
 +(n-2k)(n+2k+1)\gamma_{n-1,2,k-1}.
\end{equation}
\end{enumerate}
\end{maintheorem}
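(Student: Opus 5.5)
Part (i) rests on the structure of the sibling graph, using the model of \cref{main:models}. In a single increasing plane binary tree, each nonroot label is either a left or a right child, and two labels are siblings when they share a parent. So the sibling relation of one tree is a matching on the nonroot labels. For a synchronised pair $(S_1,S_2)$, the sibling graph is the union of two matchings, hence a disjoint union of paths and even cycles; odd components are necessarily paths.

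Flipping a label $v$ means swapping left and right at the parent of $v$, i.e.\ exchanging $v$ with its sibling or moving a lone child to the other side. In a tree, swapping a sibling pair exchanges both statuses at once. So flipping a whole component, simultaneously in both trees, swaps the subtrees at every parent involved. This is a well-defined operation on pairs of increasing plane binary trees, and it keeps the left-child sets equal: every label of the component changes status in both trees. The sibling structure is unchanged, so the component decomposition is invariant, and flips of distinct components act on disjoint parents and therefore commute.

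Next I would compute the effect of a flip on $|L|$. In an even component, every edge is a sibling pair in some tree and so contains one left and one right label. An even path alternates edges from $S_1$ and $S_2$, so its left set is forced up to complementation, and flipping preserves the count $|C|/2$. For an odd path $C$ on $2m+1$ vertices, the edges cover all but one endpoint in each tree, and those two uncovered endpoints are lone children. I expect a parity argument to show that the statuses along the path are determined by the status $\epsilon$ of one endpoint, with $|L\cap C|\in\{m,m+1\}$ according to $\epsilon$, and with the flip exchanging the two values. Each odd component therefore contributes a factor $t^m(1+t)$ to the orbit enumerator. Choosing, in each odd component, the representative with $m$ left labels gives the unique reduced pair. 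If the reduced pair has $k$ left labels and there are $r$ odd components, then $n-1=2k+r$, so the orbit enumerator is $t^k(1+t)^r=t^k(1+t)^{n-1-2k}$. Summing over orbits with \eqref{tree:enumerator} and using uniqueness of the gamma expansion \eqref{alg:gamma-expansion} identifies $\gamma_{n,2,k}$ with the number of reduced pairs with $k$ common left-child labels. For $\gamma_{n,2,0}=1$: $k=0$ forces every component to be an odd path of size $1$ with no left labels, so both trees are right chains, which gives a unique pair.

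The hard step is the odd-path parity claim, namely that a single flip changes $|L\cap C|$ by exactly one. I would verify it by walking along the path: the edges alternately impose $x_u+x_v=1$ (one left, one right) in $S_1$ and in $S_2$, so the statuses alternate along the path. A path with $2m+1$ vertices then has $m$ or $m+1$ left labels.

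Part (ii) can be approached algebraically, avoiding a bijective insertion argument. Write $A_{n,2}(t)=(1+t)^{n-1}\Gamma_{n,2}\bigl(t/(1+t)^2\bigr)$. The recurrence for $A_2(n,k)$ becomes the operator identity
\[
A_{n,2}=\bigl((1+(n-1)t)+\cdots\bigr)\text{-type second-order differential operator in }t
\]
built from $(k+1)^2$ and $(n-k)^2$ acting via $\theta=t\,d/dt$. Substituting $u=t/(1+t)^2$ and comparing coefficients of $t^k(1+t)^{n-1-2k}$ should give the stated recurrence with coefficients $(k+1)^2$ and $(n-2k)(n+2k+1)$. This is a routine but lengthy computation, and I would check it numerically for small $n$. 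An alternative is a combinatorial proof: insert the label $n$ into reduced pairs and count the admissible positions, which splits as $(k+1)^2$ or $(n-2k)(n+2k+1)$.
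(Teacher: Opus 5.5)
Your part (i) is correct and follows the paper's argument. Alternation of statuses along each component is the paper's observation that $G(\mathbf T)$ is bipartite with parts $L(\mathbf T)$ and its complement. It gives the factor $t^m+t^{m+1}$ for an odd component of size $2m+1$ and $t^b$ for an even component of size $2b$, and $n-1=2k+r$ then finishes the orbit count. Keep in mind that even components must never be flipped. You compute the effect of such a flip, but if it belonged to the action, each even component would double the orbit. The reduced pair would then no longer be unique, because reducedness imposes nothing on even components.

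Part (ii) has a genuine gap. You say that substituting $u=t/(1+t)^2$ and comparing coefficients ``should give'' the recurrence, and you offer numerical checks for small $n$. Nothing in the proposal forces a \emph{two-term} recurrence, let alone the coefficients $(k+1)^2$ and $(n-2k)(n+2k+1)$. The operator is also only gestured at. The correct one is $(\theta+1)^2+t(n-1-\theta)^2$, whose zeroth-order part is $1+(n-1)^2t$, not $1+(n-1)t$. A priori, this second-order operator sends a gamma-basis element $t^j(1+t)^q$ with $2j+q=n-2$ to a palindromic polynomial that could involve every $t^i(1+t)^{n-1-2i}$ with $i\ge j$. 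The missing step is the basis identity
\[
 \bigl((\theta+1)^2+t(n-1-\theta)^2\bigr)t^j(1+t)^q
 =(j+1)^2t^j(1+t)^{q+1}+q(q+4j+5)\,t^{j+1}(1+t)^{q-1}.
\]
To prove it, factor out $t^j$, so that the operator becomes $(\theta+j+1)^2+t(q+j+1-\theta)^2$. Then use $\theta(1+t)^q=qt(1+t)^{q-1}$ and $\theta^2(1+t)^q=qt(1+t)^{q-1}+q(q-1)t^2(1+t)^{q-2}$. Apply the identity to the gamma expansion of $A_{n-1,2}$. The coefficient of $t^k(1+t)^{n-1-2k}$ then comes only from $j=k$ and from $j=k-1$. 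In the latter case $q=n-2k$ and $q+4j+5=n+2k+1$, which gives exactly \eqref{gamma:recurrence-equation}. Working in the variable $u$ is possible but clumsy. The reason is that $\theta u=u(1-t)/(1+t)$ is not a function of $u$, so you would need palindromicity to eliminate the odd powers of $(1-t)/(1+t)$. Your alternative insertion argument on reduced pairs is only named. You give no count of admissible positions for the label $n$, so it cannot replace the computation.
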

Section~\ref{sec:gamma-action} constructs the action and proves the
recurrence. We mention how the action recovers the ordinary
unary-tree action when $\ell=1$.  The gamma-positivity for $\ell=2$
was proved in~\cite{Shankar2026}. Our Theorem~\ref{main:gamma} here gives
the orbit interpretation of its coefficients.

A finite graded poset is \emph{PECK} if it is rank-symmetric,
rank-unimodal, and strongly Sperner.  The last condition says that
the largest union of $k$ antichains has the size of the $k$ largest
ranks, for every $k$.  A symmetric Boolean (respectively, chain)
decomposition partitions the poset into Boolean subposets (respectively,
saturated chains) whose endpoint ranks sum to its rank.

\begin{maintheorem}[PECKness and symmetric decompositions]\label{main:peck}
\label{PECK:r1}\label{PECK:symmetric-decompositions}
For every $n\ge1$, $\mathcal Q_{n,1}$ admits a symmetric Boolean
decomposition and a symmetric chain decomposition.  On its tree basis,
let $E$ sum the moves of a unary right child to the left, let $F$ sum
the reverse moves, and set $HT=(2\rho(T)-n+1)T$.
These operators define an $\mathfrak{sl}_2$-representation, with $E$
supported on upward covers and $F$ on downward covers.
In particular, $\mathcal Q_{n,1}$ is PECK.
\end{maintheorem}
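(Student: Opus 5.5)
The plan is to transport everything to the tree basis $\mathcal T_{n,1}$ via the bijection $\Phi_n$ of \cref{main:models}(i). There the rank is $\rho(T)=|L(T)|$, the number of left-child labels. The first step is to describe covers in the tree model. A cover $f\lessdot g$ in $\mathcal Q_{n,1}$ is a proper kernel refinement that raises $|B|$ by one. By \cref{main:models}(i), blocks are exactly the maximal right chains, so raising $|B|$ by one means one right chain splits into two. I will check that the covers from $T$ are exactly the trees $T'$ obtained by taking a vertex $v$ that is a right child with no left child (a ``unary right child'' in the sense of the statement) and moving it, with its right subtree, to become the left child of its parent. This cuts the parent's right chain at $v$ and creates a new block with minimum $v$. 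The key claim is that this move yields a bijection between upward covers of $T$ and unary right children of $T$. That both relations are generated by the same moves should follow from how $\Phi_n$ is constructed in Section~\ref{sec:preliminaries}; this verification is the main combinatorial obstacle, because one must show that every kernel split realizable by some $g$ corresponds to exactly one such move.

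With this description, the symmetric Boolean decomposition comes from a Foata--Strehl-type grouping. For each tree, consider the vertices whose left/right status can be toggled independently: right children that are unary, together with left children that are leaves of the corresponding type, as in the classical valley-hopping on increasing binary trees. Toggling any subset of these vertices gives a Boolean lattice whose edges are covers. The $\ell=1$ case of the action in \cref{main:gamma}(i), i.e.\ the ordinary unary-tree action, partitions $\mathcal T_{n,1}$ into orbits with enumerator $t^k(1+t)^{n-1-2k}$. Each orbit is therefore a Boolean interval from rank $k$ to rank $n-1-k$, which is symmetric, and it is a subposet because its edges are single moves. Applying the standard symmetric chain decomposition of Boolean lattices inside each orbit (for instance by bracketing) then gives the symmetric chain decomposition. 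One must check that orbit covers are genuine covers in $\mathcal Q_{n,1}$, which holds by the first step.

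For the $\mathfrak{sl}_2$ statement, define $E$ and $F$ as in the statement and restrict them to an orbit identified with a Boolean lattice $2^{[m]}$, $m=n-1-2k$. There I expect $E$ and $F$ to act as the usual raising and lowering operators $\sum_i e_i$ and $\sum_i f_i$ of $(\mathbb C^2)^{\otimes m}$, with $H$ acting by $2|S|-m$. The point to check is that no unary move leaves the orbit. The worry is a move that changes which vertices are toggleable, which happens when moving $v$ makes some other vertex unary or non-unary. If such cross-orbit moves exist, the commutator $[E,F]=H$ must be verified directly. The plan for that is to pair a move $v$ that is up at $T$ and down at $T'$ with its reverse, and to show that the off-diagonal terms of $EF-FE$ cancel, since moves at distinct vertices commute. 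The diagonal term is then $\#\{\text{left-movable}\}-\#\{\text{right-movable}\}$, and it remains to show this equals $2|L(T)|-(n-1)$ by counting tree vertex types. Finally, an $\mathfrak{sl}_2$-representation with $E$ and $F$ supported on covers implies the poset is rank-symmetric, rank-unimodal and strongly Sperner by Proctor's theorem, so $\mathcal Q_{n,1}$ is PECK. Rank symmetry also follows from the palindromicity of $A_{n,1}$.
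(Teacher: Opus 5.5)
\textbf{Gap: the step you call the key claim is false, and the move is misdefined.} The architecture you propose is the same as the paper's. You split the tree basis into classes of the unary-vertex action and treat each class as a Boolean lattice between ranks $b$ and $n-1-b$. You then decompose each class into symmetric chains, and check that $E,F,H$ act as the standard Boolean operators before applying the $\mathfrak{sl}_2$ criterion.

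Upward covers of $T$ are \emph{not} in bijection with unary right children. A cover only has to split one kernel block, and every subexceedant labelling of the split kernel is then an upper cover. For $n=3$, the bottom element $111$ (the right chain $1\to2\to3$) has the four upper covers $112,113,121,122$, whereas $E(111)=122+113$.

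Only the inclusion is needed, and it is immediate. A move deletes one right edge, so it splits exactly one maximal right chain. By the kernel-preserving property of $\Phi_n$, the image has a one-block refinement of the kernel. Since the order depends only on kernels, Theorem~\ref{main:models}(ii) makes the image an upper cover.

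Eligibility for a move is a condition on the parent, not on $v$. The vertex $v$ moves when its parent has exactly one child, so that the opposite slot is free, and the whole subtree of $v$, including any left child of $v$, moves with it. Under your reading (a right child with no left child), two things go wrong:
\begin{itemize}
\item The move is undefined at $v=2$ in the tree of $112$, whose root already has the left child $3$.
\item The move $113\mapsto123$, in which $v=2$ carries its left child $3$, is lost. Then $E(113)=0$, while $F(113)=111$ and $E(111)$ contains $122$. So $[E,F](113)$ has an uncancelled off-diagonal term and $[E,F]\neq H$.
\end{itemize}

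\textbf{Two further points you leave open.} Both are easy once the move is defined correctly.

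First, the toggled edges are exactly the edges below unary vertices, not the mixture of unary right children and leaves you describe. A move changes no parent relation, so it preserves the unary and binary vertices and the left--right order at each binary vertex. Hence every skeleton class is invariant under $E$ and $F$, and there are no cross-orbit moves. On the basis $e_S$ the operators are exactly the Boolean ones, which gives $[E,F]e_S=(2|S|-u)e_S=(2\rho(T)-n+1)e_S$. Your fallback cancellation argument needs this same invariance: it is what guarantees that moves at distinct vertices remain available and commute.

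Second, the classes are not intervals of $\mathcal Q_{n,1}$; for $n=3$ the interval from $111$ to $123$ is the whole poset. Knowing that Boolean edges are covers only shows that the Boolean order is contained in the induced order. For the converse, use $\mathbf f<\mathbf g\Rightarrow B(\mathbf f)\subsetneq B(\mathbf g)$ together with $B=\{1\}\cup L(T)$. Comparable orientations of one skeleton then have nested left-child sets, hence nested sets $S$, so the induced order is Boolean.

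With these corrections your argument becomes the paper's proof.
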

The operators and decompositions are proved in Section~\ref{sec:PECK}.
Palindromicity and Theorem~\ref{main:eulerian}\textup{(i)} give rank
symmetry and unimodality for all $\ell$.  We propose the remaining
order-theoretic conjecture.

\begin{mainconjecture}\label{PECK:conjecture}
For every $n\ge1$ and integer $\ell>1$, the poset $\mathcal Q_{n,\ell}$
is PECK.
\end{mainconjecture}

Finally, an $m$-Stirling permutation is a permutation of
$\{1^m,\ldots,n^m\}$ with no smaller letter between consecutive
occurrences of a letter.  Let $d(\sigma)$ count the number of internal descents.
Its descent history records whether this number increases when the
block of copies of $i$ is inserted, for $i=2,\ldots,n$.
Let $F^{(m)}_{n,\ell}$ enumerate tuples with the same history, with
$z_i$ marking an increase at step $i$, and let
$S^{(m)}_{n,\ell}(t)=F^{(m)}_{n,\ell}(t,\ldots,t)$.
Use maps $f:[n]\to[m(n-1)+1]$ satisfying $f(i)\le m(i-1)+1$
to define the synchronised kernel-refinement poset
$\mathcal R^{(m)}_{n,\ell}$.

\begin{maintheorem}[Stirling extensions]\label{main:stirling}
\label{stirling:stability}\label{stirling:tree-bijection}
For positive integers $m,\ell,n$, the following assertions hold.
\begin{enumerate}[label=\textup{(\roman*)}]
\item $F^{(m)}_{n,\ell}$ is real stable and its history distribution
is strongly Rayleigh.  For fixed $m,\ell$, the polynomials
$S^{(m)}_{n,\ell}$ have simple negative zeros and consecutive members
strictly interlace.  Their coefficients are log-concave and unimodal.
The row pencil $S^{(m)}_{n+1,\ell}(t)+zS^{(m)}_{n,\ell}(t)$ is real
stable; its real combinations have the same zero-sign alternatives
as in Theorem~\ref{main:eulerian}\textup{(iii)}.
\item The bounded functions above are in bijection with increasing
$(m+1)$-ary trees, with kernel blocks corresponding to maximal chains
in the rightmost slot.  Under the Gessel correspondence, non-root
block leaders are the insertion labels that increase the descent
number.  The poset $\mathcal R^{(m)}_{n,\ell}$ is graded of rank
$n-1$, with rank polynomial $S^{(m)}_{n,\ell}$.
\end{enumerate}
\end{maintheorem}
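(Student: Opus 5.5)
Part (i) will follow from Theorem~\ref{stab:general}, and part (ii) from the bijections of Theorem~\ref{main:models}, adapted from binary to $(m+1)$-ary trees.

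\emph{Identifying the recurrence.} The $m$-Stirling Eulerian numbers satisfy
\[
 S^{(m)}(n,k)=(k+1)S^{(m)}(n-1,k)+(mn-m-k+1)S^{(m)}(n-1,k-1).
\]
This holds because inserting the block $i^m$ into one of the $m(n-1)+1$ gaps of an $m$-Stirling permutation of $[n-1]$ preserves $d$ in exactly $k+1$ gaps (descent positions plus the end) and raises $d$ by one in the rest. Raising both coefficients to the power $\ell$ and synchronising the histories gives the tuple count, with $z_i$ marking an increase at step $i$. So $F^{(m)}_{n,\ell}$ is exactly the cross-step refinement $\mathcal F_{n,\ell}$ with $n_0=k_0=1$ (or the equivalent shifted labelling) and
\[
 c(n,k)=k+1,\qquad d(n,k)=mn-k-(m-1),
\]
that is, $\alpha=0$, $\beta=1$, $\alpha'=m$, $\beta'=-1$.

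\emph{Checking the hypotheses of Theorem~\ref{stab:general}.} We have $\alpha/\beta=0\in\mathbb Z_{\ge0}$ and $\alpha'/(-\beta')=m\in\mathbb Z_{\ge1}$. Positivity of $a_n(j)$ and $b_n(j)$ on the admissible range reduces to $d(n,j+1)=m(n-1)-j>0$ for $j\le n-2$, which holds. Stability of $F^{(m)}_{n,\ell}$ follows, and since its generating polynomial is multiaffine with nonnegative coefficients, the normalised history distribution is strongly Rayleigh. Diagonalising gives negative real zeros of $S^{(m)}_{n,\ell}$, and Newton's inequalities then give log-concavity and unimodality.

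\emph{Simplicity, strict interlacing and the row pencil.} I would rerun the intermediate-interlacer argument of Section~\ref{alg:section} for Theorem~\ref{main:eulerian}(ii),(iii). The recurrence gives
\[
 S_{n+1}=(1+t\,\partial)^{\ell}\text{-type operator applied to }S_n \;+\; t\cdot(\text{reflected operator})\,S_n .
\]
One shows that the two summands interlace $S_n$ in the proper sense, and then propagates strictness by induction: a common zero of $S_n$ and $S_{n+1}$ would force a common zero of $S_{n-1}$ and $S_n$, contradicting the induction hypothesis. The Hermite--Kakeya--Obreschkoff theorem then makes the pencil real stable. The sign alternatives come from evaluating at $t=0$ and comparing leading coefficients: $S_n$ has constant term $1$ and degree $n-1$, with leading coefficient ratio $1$ when the top coefficient is $\prod (\text{last weights})^\ell$. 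So $s=-1$ is the threshold exactly when $S_{n+1}(0)=S_n(0)$.

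\emph{Main obstacle.} This is strictness. Unlike the palindromic Eulerian case, $S^{(m)}_{n,\ell}$ is not palindromic for $m>1$, so any step in Section~\ref{alg:section} that uses symmetry must be replaced by a direct argument. The replacement I would try first is positivity of the operators on $(-\infty,0)$ together with the nonvanishing of $S_n$ at $0$.

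\emph{Part (ii).} I would mimic $\Phi_n$: insert $i$ into the slot determined by $f(i)\le m(i-1)+1$. There are $m(i-1)+1$ choices, matching the $m(i-1)+1$ free slots of an increasing $(m+1)$-ary tree on $[i-1]$, with the rightmost-slot chains encoding kernel blocks. I would then compose with Gessel's bijection between increasing $(m+1)$-ary trees and $m$-Stirling permutations, and verify that a label starts a new block exactly when the insertion raises $d$. Gradedness then follows as in Theorem~\ref{main:models}(ii), with $\rho=|B|-1$ and covers obtained by merging one block in each coordinate.
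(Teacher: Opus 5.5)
\paragraph{Verdict.} The strict-interlacing step of part~(i) has a genuine gap; the rest of your plan essentially follows the paper.

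\paragraph{What matches the paper.} Your treatment of stability, the strongly Rayleigh property, log-concavity and part~(ii) is essentially the paper's, with two small fixes:
\begin{itemize}
\item Take the initial vertex $(k_0,n_0)=(0,1)$. With $k_0=1$ and $c(n,k)=k+1$ you would get $a_n(j)=j+2$.
\item In your slot-by-slot insertion, a repeated value must go to the free rightmost slot at the bottom of that block's chain. A new value must go to one of the $m(i-1)-k_i$ free nonrightmost slots. Otherwise the kernel blocks are not rightmost chains.
\end{itemize}

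\paragraph{The gap in strict interlacing.} Write $S_n=S^{(m)}_{n,\ell}$, $U=(\theta+1)^\ell S_n$ and $V=(mn-\theta)^\ell S_n$. Your sentence ``one shows that the two summands interlace $S_n$'' is precisely what needs proof. For $\ell\ge2$, Lemma~\ref{int:root-movement} does not give it. Iterating only yields $v_i<\alpha_i<u_i$, and nothing stops $u_i$ from passing $\alpha_{i+1}$. Indeed, for a fixed $f$, all zeros of $(\theta+1)^\ell f$ tend to $0$ as $\ell\to\infty$.

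The missing idea is a second use of the multivariate theorem. Setting the old variables of $F^{(m)}_{n+1,\ell}$ equal to $t$ gives the real stable polynomial $U(t)+zV(t)$. By Hermite--Biehler, $U$ and $V$ weakly interlace, and $v_1<u_1$ forces $v_1\le u_1\le v_2\le\cdots\le v_d\le u_d$. Combined with $v_i<\alpha_i<u_i$, this gives $u_{i-1}\le v_i<\alpha_i<u_i\le v_{i+1}$. Hence $U(\alpha_i)$ and $\alpha_iV(\alpha_i)$ have the same nonzero sign $(-1)^{d-i+1}$. The resulting sign pattern of $S_{n+1}=U+tV$ places one zero in each of $(-\infty,\alpha_1),(\alpha_1,\alpha_2),\ldots,(\alpha_d,0)$. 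This gives simplicity and strictness at once.

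\paragraph{Your substitute strictness mechanism fails.} A common zero $\alpha$ of $S_n$ and $S_{n+1}$ says nothing about $S_{n-1}$.
\begin{itemize}
\item For $\ell=1$ one has $S_{n+1}=(1+mnt)S_n+t(1-t)S_n'$. A common zero therefore forces $S_n'(\alpha)=0$, i.e.\ a double zero of $S_n$, not a common zero with $S_{n-1}$.
\item For $\ell\ge2$ the relation involves $S_n^{(r)}(\alpha)$ for all $r\le\ell$, and yields no contradiction.
\end{itemize}

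\paragraph{The obstacle you name is not real.} The Eulerian argument never uses palindromicity. It uses only $mn>d$ (so that Lemma~\ref{int:root-movement} applies to $mn-\theta$), positive constant and leading coefficients, and $S_n(0)=1$.

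\paragraph{The row pencil.} The ratio of leading coefficients is $((m-1)n+1)^\ell$, not $1$. This only changes the slope of the linear part of $S_{n+1}/S_n$ in the partial-fraction argument. The threshold $s=-1$ comes from $S_{n+1}(0)/S_n(0)=1$. Hermite--Kakeya--Obreschkoff by itself gives only real-rootedness of the real combinations. Stability of $S_{n+1}+zS_n$ also needs the orientation, which the positive residues $-S_{n+1}(\alpha_i)/S_n'(\alpha_i)$ supply.
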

The stability assertion is another application of
Theorem~\ref{stab:general}, recorded in Table~\ref{stab:applications}.
Section~\ref{sec:stirling} proves the remaining assertions and discusses
variable multiplicities.  These polynomials record insertion histories,
rather than the final descent-top sets used in
\cite{haglund-visontai-stable-multivariate}.

For ordinary Stirling permutations ($m=2$), let $\mathcal R_{n,p}$
be the induced subposet of $\mathcal R^{(2)}_{n,1}$ with exactly $p$
plateaux, where a plateau is an adjacent equal pair.
In the ternary-tree model, fix all parent relations, middle edges,
and the order at doubly occupied outer slots; forget the orientations
at singly occupied outer slots.  Call this an \emph{outer-slot skeleton}.

\begin{maintheorem}[Fixed-plateau posets]\label{stirling:plateau-peck}
For $1\le p\le n$, the poset $\mathcal R_{n,p}$ is graded of rank
$p-1$, with rank function $\rho_p(\sigma)=d(\sigma)-(n-p)$.
It admits symmetric Boolean and chain decompositions and
cover-supported $\mathfrak{sl}_2$ raising and lowering operators.
In particular, it is PECK.  Its rank polynomial is
\begin{equation}\label{stirling:partial-gamma}
 t^{-(n-p)}\sum_{\operatorname{plat}(\sigma)=p}t^{d(\sigma)}
 =\sum_{b=0}^{\lfloor(p-1)/2\rfloor}
 g_{n,p,b}\,t^b(1+t)^{p-1-2b},
\end{equation}
where $g_{n,p,b}$ counts outer-slot skeletons with $b$ doubly occupied
outer pairs.
\end{maintheorem}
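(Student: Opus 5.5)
We would work entirely in the ternary-tree model of Theorem~\ref{main:stirling}(ii) with $m=2$. Under the bijection with increasing ternary trees, the kernel blocks are the maximal chains in the rightmost slot. Under the Gessel correspondence, the number of plateaux corresponds to the number of vertices (including the root position) whose \emph{middle} slot is empty. Equivalently, $n-p$ counts the occupied middle slots, since each middle child separates the two copies of its parent. The first step is to fix this dictionary precisely, checking that $\operatorname{plat}(\sigma)=n-(\#\text{middle edges})$ and that $d(\sigma)$ equals $(n-p)$ plus the number of occupied left outer slots. The point to verify is that a middle insertion always increases the descent count, whereas an outer insertion increases it only on one side. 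Granting this, the rank function $\rho_p=d-(n-p)$ counts left outer children, which lies between $0$ and $p-1$.

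Next we describe covers in $\mathcal R_{n,p}$. A proper kernel refinement adds exactly one block leader, and staying inside $\mathcal R_{n,p}$ means the middle edges are unchanged. We aim to show that the covers are exactly the moves of a right outer child to the empty left outer slot of the same parent, or its reverse, with all parent relations and middle edges fixed. This mirrors the unary-child moves in Theorem~\ref{main:peck}. The outer-slot skeleton is then invariant along covers, so $\mathcal R_{n,p}$ splits as a disjoint union over skeletons. Within one skeleton the orientations at the singly occupied outer slots can be chosen freely.

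Say a skeleton has $b$ doubly occupied outer pairs, each contributing one fixed left child, and $s$ singly occupied slots. The fibre is then a Boolean lattice $B_s$ shifted up by $b$. Counting the outer children gives $2b+s=p-1$, which yields \eqref{stirling:partial-gamma} immediately, together with the symmetric Boolean decomposition since $b+(b+s)=p-1$. Symmetric chain decompositions follow by applying the standard decomposition inside each Boolean block.

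For the $\mathfrak{sl}_2$ operators we take, on each Boolean fibre, the usual raising and lowering operators $E$ and $F$ (sums of single flips), with $H=(2\rho_p-p+1)$. Their direct sum is a representation supported on covers. By Proctor's criterion (Boolean blocks, or symmetric chains plus these operators), $\mathcal R_{n,p}$ is PECK.

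The main obstacle is the cover characterisation. We must exclude refinements that change parent relations or middle edges while preserving the plateau count, and we must confirm that a single orientation flip really is a proper kernel refinement in the poset order. We would handle this by comparing block-leader sets, using the kernel–chain correspondence of Theorem~\ref{main:stirling}(ii).
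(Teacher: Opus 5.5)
Your overall plan matches the paper: outer-slot skeletons, Boolean fibres with $2b+s=p-1$, the skeleton count giving \eqref{stirling:partial-gamma}, and Boolean operators fed into Lemma~\ref{PECK:sl2-criterion}. However, the step you single out as the main obstacle is false, and you also lean on it for gradedness. Covers of $\mathcal R_{n,p}$ are not only single outer flips, and neither the middle edges nor the skeleton is invariant along covers. In $\mathcal R_{3,2}$, every rank-zero element has a two-block kernel and every rank-one element has the discrete kernel. So all $16$ comparable pairs between the two ranks are covers, while only $4$ of them are flips. For instance, $133122$ (root $1$ with right child $2$ and middle child $3$, kernel $\{1,2\}\mid\{3\}$) is covered by $331221$ (root $1$ with middle child $2$ and left child $3$), which has a different middle edge. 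Similarly, in $\mathcal R_{3,3}$ the bottom element $112233$ is covered by $221133$, which forms a singleton skeleton class with $b=1$. Thus $\mathcal R_{n,p}$ is not a disjoint union of its skeleton classes, and no argument excluding covers that change parent relations or middle edges can succeed. The symmetric Boolean decomposition is only a partition of the underlying set into induced Boolean subposets.

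For the decompositions and the operators you only need the easy direction. A right-to-left flip at a singly occupied outer slot splits one right chain, so it is a cover, since the block count rises by exactly one. Inside one skeleton class, comparability forces strict inclusion of block-minimum sets, which there means strict inclusion of $S$. Gradedness, however, needs an idea your proposal does not contain. The intermediate element used in Theorem~\ref{main:models}(ii) labels each block by its minimum, and this can change the number of middle edges. For example, the minimum labelling $123$ of the discrete partition of $[3]$ has two plateaux, not three, so that construction may leave $\mathcal R_{n,p}$. The paper instead proves a realisability lemma: a partition with $s$ blocks is the right-chain partition of a ternary tree with exactly $q=n-p$ middle edges if and only if $s\ge q+1$. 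To realise it, designate $q$ nonroot block minima as middle children and the rest as left children, and attach the chain roots in increasing order. A root $v$ has $v-1$ earlier candidate parents, while at most $v-2$ earlier roots have used that slot type, so a free slot exists. Consequently every partition strictly between two comparable elements is realised inside $\mathcal R_{n,p}$. Hence covers raise the block count by exactly one, minimal elements have $q+1$ blocks, and maximal elements have $n$ blocks. This is what yields the rank function $\rho_p=d-(n-p)$ and rank $p-1$.
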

The partial gamma expansion and its tree action are known
\cite{MaMaYeh2019,ChenFuYan2023}.  The final part of
Section~\ref{sec:stirling} proves their compatibility with kernel
refinement, which gives the PECK and decomposition assertions.

\renewcommand{\theequation}{\thesection.\arabic{equation}}
\section{Subexceedant functions, trees, and posets}\label{sec:preliminaries}
We prove Theorem~\ref{main:models} by relating subexceedant functions,
increasing binary trees, and kernel-refinement posets.
We first recall the definition of the parametrised Eulerian polynomials.
We write $A_{\ell}(n,k)$ for the family denoted by $A^{(\ell)}_1(n,k)$
in \cite{Shankar2026}.  These parametrised Eulerian numbers satisfy the recurrence
\cite[Proposition~18]{Shankar2026}
\begin{equation}\label{prelim:recurrence}
 A_{\ell}(n,k)=(k+1)^{\ell}A_{\ell}(n-1,k)+(n-k)^{\ell}A_{\ell}(n-1,k-1)
 \qquad(n\geq2),
\end{equation}
with $A_{\ell}(1,0)=1$ and $A_{\ell}(n,k)=0$ for $k\notin\{0,\ldots,n-1\}$.
We write
\[
 A_{n,\ell}(t)=\sum_{k=0}^{n-1}A_{\ell}(n,k)t^k.
\]
When $\ell=1$, these are the ordinary Eulerian polynomials, with the convention
that the exponent records the number of descents.
The first nonconstant rows are
\begin{align*}
 A_{2,\ell}(t)&=1+t,\\
 A_{3,\ell}(t)&=1+2^{\ell+1}t+t^2,\\
 A_{4,\ell}(t)&=1+(2\cdot4^{\ell}+3^{\ell})t+(2\cdot4^{\ell}+3^{\ell})t^2+t^3.
\end{align*}
\subsection{Subexceedant functions and parametrised Eulerian numbers}

We use the subexceedant-function model introduced by Shankar
\cite[p.~5 and Section~4]{Shankar2026}. A \emph{subexceedant function} on $[n]$ is a map $f:[n]\to[n]$ satisfying
$f(i)\leq i$ for every $i$.  We also write it as the word $f(1)\cdots f(n)$.
Its \emph{kernel} $\ker(f)$ is the partition of $[n]$ into nonempty sets of
positions having the same image under $f$.  The \emph{block-leader set} is
\[
 \bl(f)=\{i\in[n]:f(i)\notin f([i-1])\},
\]
where $f([0])=\varnothing$.  Therefore, $\bl(f)$ is the set of
minima of the kernel blocks.
Let
\[
 \mathcal Q_{n,\ell}
 =\{(f_1,\ldots,f_\ell): f_a\text{ is subexceedant and }
                  \bl(f_1)=\cdots=\bl(f_\ell)\}.
\]
We call such tuples \emph{synchronised}.  For $\mathbf f\in\mathcal Q_{n,\ell}$,
write $B(\mathbf f)$ for its common block-minimum set.  We identify
$\mathcal Q_{n,1}$ with the set of subexceedant functions on $[n]$.

Recall Shankar's enumeration~\cite{Shankar2026}, whose proof also
identifies the two extension weights used below.
\begin{proposition}\label{prelim:tuple-enumeration}
The number of tuples $\mathbf f\in\mathcal Q_{n,\ell}$ with
$|B(\mathbf f)|=k+1$ is $A_{\ell}(n,k)$.
\end{proposition}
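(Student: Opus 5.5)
We argue by induction on $n$, deleting the last coordinate. Let $N_\ell(n,k)$ be the number of tuples in $\mathcal Q_{n,\ell}$ with $|B(\mathbf f)|=k+1$. It suffices to show that $N_\ell$ satisfies the recurrence \eqref{prelim:recurrence} with the same initial conditions. For $n=1$ the only subexceedant function is $f(1)=1$, and it has $\bl(f)=\{1\}$, so $N_\ell(1,0)=1$. Also, $1\in\bl(f)$ always, and $|\bl(f)|\le n$, so $N_\ell(n,k)=0$ outside $0\le k\le n-1$.

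For the inductive step, fix $n\ge2$. Restriction to $[n-1]$ sends a subexceedant function on $[n]$ to one on $[n-1]$. The block-leader sets are related by
\[
\bl(f)=\bl(f|_{[n-1]})\cup\{n\}\quad\text{if } f(n)\notin f([n-1]),
\qquad
\bl(f)=\bl(f|_{[n-1]})\quad\text{otherwise}.
\]
So a synchronised tuple $\mathbf f\in\mathcal Q_{n,\ell}$ restricts to a synchronised tuple $\mathbf g\in\mathcal Q_{n-1,\ell}$. Moreover, $n$ is a block leader either in every coordinate or in none. Conversely, fix $\mathbf g$ with $|B(\mathbf g)|=j+1$. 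Then each $g_a$ has exactly $j+1$ distinct values, because each kernel block of $g_a$ has exactly one minimum. The ways to extend $g_a$ by choosing $f_a(n)\in[n]$ split as follows:
\begin{itemize}
\item $j+1$ choices reuse an existing value, so $n\notin\bl(f_a)$;
\item $n-(j+1)$ choices use a new value, so $n\in\bl(f_a)$.
\end{itemize}
The synchronisation condition requires the same one of these two cases in all $\ell$ coordinates. Since the coordinates are otherwise independent, the number of extensions is a product over coordinates, giving $(j+1)^\ell$ extensions of the first kind and $(n-1-j)^\ell$ of the second.

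Now count by the size of $B(\mathbf f)$. A tuple with $|B(\mathbf f)|=k+1$ arises in one of two ways:
\begin{itemize}
\item from some $\mathbf g$ with $j=k$ using an extension of the first kind, contributing $(k+1)^\ell N_\ell(n-1,k)$;
\item from some $\mathbf g$ with $j=k-1$ using an extension of the second kind, contributing $(n-1-(k-1))^\ell N_\ell(n-1,k-1)=(n-k)^\ell N_\ell(n-1,k-1)$.
\end{itemize}
Adding these gives exactly \eqref{prelim:recurrence}, so $N_\ell(n,k)=A_\ell(n,k)$ for all $n$ and $k$.

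The only delicate step is the claim that each coordinate $g_a$ has exactly $|B(\mathbf g)|$ distinct values. This holds because $|\bl(g)|=|g([n-1])|$: each value taken by $g$ corresponds to one kernel block, which has exactly one minimum. The fact that this count is the same in every coordinate is what makes the per-coordinate weights $(j+1)$ and $(n-1-j)$ multiply to the $\ell$-th powers; this is precisely where synchronisation is used.
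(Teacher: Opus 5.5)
Your proof is correct and follows the same route as the paper: restrict to $[n-1]$, note that $n$ is a new block minimum in every coordinate or in none, and count $(k+1)^\ell$ value-reusing extensions and $(n-k)^\ell$ new-value extensions to recover the recurrence \eqref{prelim:recurrence}. You also spell out the initial condition, the vanishing outside $0\le k\le n-1$, and the fact that each coordinate takes exactly $|B(\mathbf g)|$ values; the paper leaves these implicit.
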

\begin{proof}
We begin by restricting a tuple to $[n-1]$.  If $n$ is not a new block minimum, each
coordinate repeats one of its $k+1$ existing values, giving $(k+1)^{\ell}$
extensions.  If $n$ is a new block minimum, the restricted tuple has $k$
blocks.  Each coordinate chooses one of the $n-k$ values in $[n]$ not
previously used, giving $(n-k)^{\ell}$ extensions.  The resulting recurrence
is \eqref{prelim:recurrence}, with the same initial condition.
\end{proof}

Encode a set $B\subseteq[n]$ containing $1$ by the binary word
$e=(e_2,\ldots,e_n)$, where $e_i=1$ if and only if $i\in B$.
Write $|e|=\sum_{i=2}^n e_i$, and define
\begin{equation}\label{prelim:word-count}
 c(e)=\prod_{i=2}^n
 \left(1+\bigl|\{j:2\leq j<i,\ e_j\ne e_i\}\bigr|\right),
\end{equation}
with the empty product equal to $1$.

The following is a combinatorial proof of palindromicity of the parametrised Eulerian polynomials.
\begin{proposition}\label{prelim:palindromicity}
For every binary word $e$, the number of subexceedant functions whose
block-minimum set is $\{1\}\cup\{i:e_i=1\}$ equals $c(e)$.  Consequently,
\begin{equation}\label{prelim:word-enumerator}
 A_{n,\ell}(t)=\sum_{e\in\{0,1\}^{n-1}}c(e)^{\ell} t^{|e|}.
\end{equation}
The polynomial $A_{n,\ell}$ is monic and palindromic of degree $n-1$, with
constant coefficient $1$ and positive coefficients in every degree.
\end{proposition}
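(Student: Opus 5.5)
The argument splits into a count of subexceedant functions with a prescribed block-minimum set, followed by a symmetry check under complementing the word.

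\textbf{Counting functions with a given $B$.} Fix $e$ and let $B=\{1\}\cup\{i:e_i=1\}$. Build $f$ by choosing $f(1),f(2),\dots,f(n)$ in order, as in the proof of Proposition~\ref{prelim:tuple-enumeration}. Necessarily $f(1)=1$. Suppose $f(1),\dots,f(i-1)$ are fixed, with block-minimum set $B\cap[i-1]$; their image then has exactly $|B\cap[i-1]|$ distinct values.
\begin{itemize}
\item If $e_i=0$, then $f(i)$ must repeat an existing value. Every value $f(j)$ with $j<i$ satisfies $f(j)\le j<i$, so the subexceedant condition holds automatically. This gives $|B\cap[i-1]|=1+|\{2\le j<i:e_j=1\}|$ choices.
\item If $e_i=1$, then $f(i)$ must be a value in $[i]$ not yet used. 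There are $i-|B\cap[i-1]|=1+|\{2\le j<i:e_j=0\}|$ such values.
\end{itemize}
In both cases the number of choices is $1+|\{2\le j<i: e_j\ne e_i\}|$, and it does not depend on the earlier choices. Multiplying over $i$ gives $c(e)$.

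\textbf{The enumerator.} A tuple in $\mathcal Q_{n,\ell}$ with common set $B$ consists of $\ell$ independent such functions, so there are $c(e)^\ell$ of them. By the definition of $A_{n,\ell}$ and Proposition~\ref{prelim:tuple-enumeration}, the exponent of $t$ is $|B|-1=|e|$. Summing over $e$ gives \eqref{prelim:word-enumerator}.

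\textbf{Palindromicity.} Let $\bar e$ denote the complement of $e$. The condition $e_j\ne e_i$ is invariant under complementing both letters, so $c(\bar e)=c(e)$. Also $|\bar e|=n-1-|e|$. Hence $e\mapsto\bar e$ is a bijection from words with $|e|=k$ to words with $|e|=n-1-k$ that preserves $c(e)^\ell$, which proves palindromicity.

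\textbf{The remaining claims.} Each $c(e)\ge1$, so every degree $0\le k\le n-1$ has a positive coefficient. The only word with $|e|=0$ is the all-zero word, and its factors all equal $1$, so the constant coefficient is $1$. By palindromicity the top coefficient is also $1$, so $A_{n,\ell}$ is monic of degree $n-1$.

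I expect no genuine obstacle. The one point needing care is confirming that the number of choices at each step is independent of the earlier choices; this holds because all previously used values are at most $i-1$.
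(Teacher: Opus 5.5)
Your proposal is correct and follows essentially the same route as the paper: sequential construction with $|B\cap[i-1]|=d_i$ reused values or $i-d_i$ new values at each step, independence of the $\ell$ coordinates, and complementation $e\mapsto\bar e$ for palindromicity. The only cosmetic difference is that you get the leading coefficient from palindromicity rather than directly from the all-ones word; both work.
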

\begin{proof}
Suppose that the values through position $i-1$ have been chosen, and put
$d_i=1+\sum_{j=2}^{i-1}e_j$.  If $e_i=0$, there are $d_i$ previously used
values available.  If $e_i=1$, there are $i-d_i$ unused values in $[i]$.
Both numbers equal the corresponding factor in
\eqref{prelim:word-count}.  Multiplying these choices proves the count and
independence of the $\ell$ coordinates proves \eqref{prelim:word-enumerator}.

Complementing all bits leaves $c(e)$ unchanged and replaces $|e|$ by
$n-1-|e|$, proving palindromicity.  The constant words have weight $1$,
so the constant and leading coefficients are $1$.  Every word has
positive weight, and words occur in every possible weight $|e|$.
\end{proof}

We retain the individual block-minimum labels in the multiaffine refinement
\begin{equation}\label{prelim:multivariate}
 F_{n,\ell}(z_2,\ldots,z_n)
 =\sum_{e\in\{0,1\}^{n-1}}c(e)^{\ell}\prod_{i:e_i=1}z_i.
\end{equation}
Its diagonal specialisation is $A_{n,\ell}(t)$.

\subsection{A kernel-preserving tree interpretation}\label{sec:trees}

An \emph{increasing plane binary tree} on $[n]$ is a rooted tree whose
vertices are labelled by $[n]$, whose edges increase away from the root,
and in which every vertex has at most one left child and at most one
right child.  Its root has label $1$.  Write $\mathcal T_n$ for
the set of these trees, and let $L(T)\subseteq\{2,\ldots,n\}$ be the set
of labels of left children in $T$.  A \emph{maximal right chain} is a
connected component of the graph obtained by deleting all left edges.
Its labels occur in increasing order from its root.

An ordered tuple $\mathbf T=(T_1,\ldots,T_\ell)\in\mathcal T_n^{\ell}$ is
said to be \emph{synchronised} if
\[
 L(T_1)=\cdots=L(T_\ell).
\]
Let $\mathcal T_{n,\ell}$ be the set of such tuples and write $L(\mathbf T)$
for their common left-child set.

The bijection in Theorem~\ref{main:models}\textup{(i)} preserves the entire kernel partition.  This
additional property will allow us to interpret tree moves as relations
in the posets considered below.

\begin{proof}[Proof of Theorem~\ref{main:models}\textup{(i)}]
We start by fixing a partition $\pi=\{B_1,\ldots,B_s\}$ of $[n]$, with its blocks
ordered so that
\[
 1=b_1<b_2<\cdots<b_s,\qquad b_j=\min B_j.
\]
A subexceedant function with kernel $\pi$ is specified by distinct
values $a_1,\ldots,a_s$, where $a_j\in[b_j]$, by setting $f(i)=a_j$
for $i\in B_j$. $a_1=1$ is forced. After $a_1,\ldots,a_{j-1}$
have been chosen, the available values for $a_j$ form the ordered set
\[
 I_j=[b_j]\setminus\{a_1,\ldots,a_{j-1}\},
 \qquad |I_j|=b_j-j+1.
\]

We form a right chain from each block $B_j$, listing its vertices in
increasing order.  For $j=2,\ldots,s$, we attach the root $b_j$ of the
$j$th chain as a left child of a vertex $p_j<b_j$ whose left-child
position is still unused.  The available parent labels form the
ordered set
\[
 J_j=[b_j-1]\setminus\{p_2,\ldots,p_{j-1}\},
 \qquad |J_j|=b_j-j+1.
\]
We choose $p_j$ to have the same position in $J_j$ as $a_j$ has in $I_j$,
where both sets are ordered increasingly.  All newly added edges can only
increase labels, every vertex other than $1$ has exactly one parent,
and no left-child position is used twice. Thus the result is an
increasing plane binary tree.  Its right chains are precisely the
chains prescribed by $\pi$.

In the other direction, the maximal right chains of a tree recover $\pi$ and their nonroot chain roots recover the parent labels $p_2,\ldots,p_s$.
Starting with $a_1=1$, we recover $a_j$ from the position of $p_j$ in
$J_j$ by choosing the value in the same position in $I_j$.  This
inverts the construction.  The roots of the nonroot right chains are
exactly the left children, proving the assertion about $L(\Phi_n(f))$. The coordinatewise extension and \eqref{tree:enumerator} follow from this.
\end{proof}

\begin{example}\label{ex:tree}
Consider $f=113223\in\mathcal Q_{6,1}$.  Its kernel blocks, in order of
their minima, are
\[
 B_1=\{1,2\},\qquad B_2=\{3,6\},\qquad B_3=\{4,5\},
\]
and their values are $(a_1,a_2,a_3)=(1,3,2)$.  For the root $b_2=3$,
the available values and parents are $I_2=\{2,3\}$ and $J_2=\{1,2\}$;
thus $a_2=3$ selects $p_2=2$.  For $b_3=4$, they are
$I_3=\{2,4\}$ and $J_3=\{1,3\}$, so $a_3=2$ selects $p_3=1$.
The resulting tree is shown in Figure~\ref{fig:tree-example}.
Its left-child set is $\{3,4\}=\bl(f)\setminus\{1\}$, and its maximal
right chains recover the three kernel blocks.
\end{example}
\begin{figure}[htbp]
\centering
\begin{tikzpicture}[x=1.05cm,y=1cm]
\node[treevertex] (a) at (0,0) {1};
\node[treevertex] (b) at (-1.2,-.8) {4};
\node[treevertex] (c) at (1.2,-.8) {2};
\node[treevertex] (d) at (-.55,-1.6) {5};
\node[treevertex] (e) at (.55,-1.6) {3};
\node[treevertex] (f) at (1.2,-2.4) {6};
\draw[treeedge] (a)--node[left,font=\scriptsize] {$L$}(b);
\draw[treeedge] (a)--node[right,font=\scriptsize] {$R$}(c);
\draw[treeedge] (b)--node[right,font=\scriptsize] {$R$}(d);
\draw[treeedge] (c)--node[left,font=\scriptsize] {$L$}(e);
\draw[treeedge] (e)--node[right,font=\scriptsize] {$R$}(f);
\end{tikzpicture}
\caption{The increasing binary tree $\Phi_6(113223)$.  Edge labels indicate
left and right child positions.}
\label{fig:tree-example}
\end{figure}

\subsection{The associated poset}\label{sec:posets}

We now order the synchronised tuples by simultaneous refinement of their
kernel partitions.  This order retains the subexceedant labellings and uses
the common block-minimum set as a rank parameter.

\begin{definition}\label{poset:definition}
For $\mathbf f,\mathbf g\in\mathcal Q_{n,\ell}$, define the strict order by
\[
 \mathbf f< \mathbf g
 \quad\Longleftrightarrow\quad
 \ker(f_a)\text{ properly coarsens }\ker(g_a)
 \quad(a\in[\ell]).
\]
Adjoining equality gives the order $\le$.  We use
$\mathcal Q_{n,\ell}$ for both the poset and its underlying set.
\end{definition}

Distinct tuples with the same tuple of kernels are incomparable.  In
particular, the order retains the subexceedant labellings of the blocks.

\begin{proof}[Proof of Theorem~\ref{main:models}\textup{(ii)}]
The defining relation is a partial order because proper refinement of set
partitions is transitive.  If $\mathbf f<\mathbf g$, then
$B(\mathbf f)\subsetneq B(\mathbf g)$: the minimum of each coarse block is
the minimum of one of the fine blocks it contains.

Suppose that $|B(\mathbf g)|-|B(\mathbf f)|\geq2$.  We choose
$v\in B(\mathbf g)\setminus B(\mathbf f)$.  In coordinate $a$, let $D_a$
be the block of $\ker(g_a)$ with minimum $v$, and let $C_a$ be the block
of $\ker(f_a)$ containing it.  We split $C_a$ into $D_a$ and
$C_a\setminus D_a$.  The latter is nonempty and retains the minimum of
$C_a$, since $v\notin B(\mathbf f)$.  The resulting partition has
block-minimum set $B(\mathbf f)\cup\{v\}$ and lies strictly between the
two original partitions.  Labelling each of its blocks by its minimum
gives a subexceedant function.  Performing this operation in every
coordinate therefore gives a tuple strictly between $\mathbf f$ and
$\mathbf g$, of rank one greater than $\mathbf f$.

Covers are exactly the comparable pairs whose numbers of
blocks differ by one.  The one-block partition has the unique
subexceedant labelling $11\cdots1$, and the discrete partition has the
unique subexceedant labelling $12\cdots n$.  This proves the assertions
about rank and bounds.  The rank enumeration follows from the
enumeration of synchronised tuples by their number of blocks.
\end{proof}

\section{A stability theorem for triangular recurrences}\label{stab:section}
We prove Theorem~\ref{stab:general}.  After recalling stability
preservers, we develop the path normalisation and finite multiplier
lemmas used in the proof.

\subsection{Stability preservers}

A nonzero polynomial with real coefficients is \emph{real stable} if it
does not vanish when all its variables have positive imaginary part.
For a univariate real polynomial, stability is equivalent to real-rootedness.
We recall the standard stability preservers; see
Srivastava~\cite[Theorem~2.7]{Srivastava2015}.

\begin{proposition}[Stability preservers]\label{stab:standard-preservers}
The following operations preserve real stability, provided the resulting
polynomial is nonzero:
\begin{enumerate}
\item multiplication of real stable polynomials;
\item permutation of variables and substitution $z_i\mapsto a_i z_i$
      with $a_i>0$;
\item identification of two or more variables;
\item specialisation of any variable to a real constant;
\item partial differentiation;
\item the substitution
      $f(\mathbf z)\mapsto z_i^{d_i}
      f(z_1,\ldots,-1/z_i,\ldots,z_m)$, where
      $d_i=\deg_{z_i}f$.
\end{enumerate}
Only specialisation and differentiation can produce the zero polynomial
from a nonzero stable input in this list.
\end{proposition}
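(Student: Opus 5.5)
The plan is to verify each item directly from the definition: a real polynomial is stable if it does not vanish when every variable lies in $\mathcal H=\{z:\operatorname{Im}z>0\}$. The two limiting cases, specialisation and differentiation, will be handled by Hurwitz's theorem and the Gauss--Lucas theorem.

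Items (1)--(3) and (6) follow from the fact that the relevant maps send $\mathcal H$ into itself.
\begin{enumerate}
\item A product of polynomials that are each nonzero on $\mathcal H^m$ is nonzero there.
\item Permuting variables, or rescaling $z_i\mapsto a_iz_i$ with $a_i>0$, maps $\mathcal H^m$ onto itself.
\item Identifying variables amounts to evaluating $f$ on a diagonal subset of $\mathcal H^m$, where $f$ does not vanish.
\item[(6)] The M\"obius map $z\mapsto -1/z$ preserves $\mathcal H$, and $z_i^{d_i}\neq0$ on $\mathcal H$. So $z_i^{d_i}f(\ldots,-1/z_i,\ldots)$ is a polynomial with real coefficients that is nonzero on $\mathcal H^m$.
\end{enumerate}
In all of these the coefficients stay real, and a nonzero input cannot become the zero polynomial, since it is nonzero at some point of $\mathcal H^m$. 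This justifies the final sentence of the statement.

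For specialisation (4), fix $a\in\mathbb R$ and set $f_\varepsilon=f(z_1,\ldots,a+i\varepsilon,\ldots,z_m)$ for $\varepsilon>0$. Each $f_\varepsilon$ has no zeros on $\mathcal H^{m-1}$, and $f_\varepsilon\to f|_{z_i=a}$ uniformly on compact sets as $\varepsilon\to0$. The multivariate Hurwitz theorem, a limit of zero-free holomorphic functions on a connected open set, then shows that the limit is either zero-free on $\mathcal H^{m-1}$ or identically zero. Since the limit has real coefficients, this is exactly the claim.

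For differentiation (5), write $f=\sum_{k=0}^{d}f_k z_i^k$ with $d=\deg_{z_i}f$. If $d=0$, then $\partial_if=0$, which is the excluded case. Assume $d\ge1$; the key step is to show that the leading coefficient $f_d$ is itself stable.
\begin{itemize}
\item Put $z_i=iy$. Then $(iy)^{-d}f\to f_d$ as $y\to\infty$, uniformly on compact subsets of $\mathcal H^{m-1}$.
\item The same Hurwitz argument shows that $f_d$ is nonzero on $\mathcal H^{m-1}$, because $f_d\not\equiv0$.
\end{itemize}
Now fix the other variables at a point $w\in\mathcal H^{m-1}$ and let $g(z)=f(w,z)$. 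Then $g$ has exact degree $d\ge1$, and all its zeros lie in the closed lower half-plane. By Gauss--Lucas, every zero of $g'$ lies in the convex hull of the zeros of $g$, hence also in the closed lower half-plane. So $\partial_if(w,z)=g'(z)\ne0$ whenever $z\in\mathcal H$, and $\partial_if$ is real stable.

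I expect the main obstacle to be this differentiation step. A naive Gauss--Lucas argument can fail on slices where the degree in $z_i$ drops, and establishing stability of the leading coefficient $f_d$ is precisely what excludes such degenerate slices. Alternatively, the whole proposition can simply be cited from Srivastava~\cite[Theorem~2.7]{Srivastava2015}, as the paper does.
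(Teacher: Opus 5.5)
Your proof is correct. Its route differs from the paper's only in that the paper gives no argument: it cites Srivastava~\cite[Theorem~2.7]{Srivastava2015}, while you prove each item directly.

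Your three ingredients are the standard ones:
\begin{itemize}
\item invariance of $\mathcal H^m$ under the maps in items (1)--(3) and (6);
\item the several-variable Hurwitz theorem for specialisation, along $a+i\varepsilon\to a$;
\item Gauss--Lucas on one-variable slices for differentiation.
\end{itemize}

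Your preliminary step in (5) is worth keeping. You show that the leading coefficient $f_d$ in the differentiated variable is nonzero on $\mathcal H^{m-1}$, using $(iy)^{-d}f\to f_d$ and Hurwitz again. Strictly speaking, Gauss--Lucas only breaks down on a slice where $g$ becomes constant, since then $g'\equiv0$. A drop to some positive degree is harmless. Your argument rules out every degree drop at once, and it also gives $g'\not\equiv0$, so the conclusion is airtight.

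Compared with the citation, your version is self-contained. It also justifies the closing sentence of the statement directly: items (1)--(3) and (6) evaluate a nonvanishing function at points of $\mathcal H^m$, so they cannot return the zero polynomial. Items (4) and (5) are the only ones where a Hurwitz limit or the case $d=0$ can give $0$. The citation buys brevity.

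One cosmetic point: in the differentiation step you use $i$ both as the index of the variable and as the imaginary unit (``put $z_i=iy$''). Rename the index to avoid confusion.
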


We also use the homogenization theorem of Borcea, Br\"and\'en and
Liggett~\cite[Theorem~4.5]{BBL2009}: a polynomial with nonnegative
coefficients is real stable if and only if its homogenization is real stable.
In particular, for every integer $D\ge\deg f$,
\begin{equation}\label{stab:homogenization}
 H_D(\mathbf z,y)=y^D f(\mathbf z/y)
\end{equation}
is real stable whenever $f$ is real stable and has nonnegative coefficients.
Polarization also preserves stability
\cite[Corollary~4.7]{BBL2009}: replacing $y^j$ by
$e_j(y_1,\ldots,y_D)/\binom Dj$, where $e_j$ is the elementary
symmetric polynomial, preserves stability for polynomials of degree at most
$D$ in $y$.

\subsection{Weighted paths and multivariate refinements}

We use the path interpretation in \cite[Section~2 and Theorem~5]{Shankar2026}.
Fix an initial vertex $(k_0,n_0)$, and consider paths whose steps are
$N=(0,1)$ and $C=(1,1)$, called north and cross steps, respectively.
For the affine triangular recurrence
\begin{equation}\label{stab:affine-recurrence}
 T_\ell(n,k)=c(n,k)^\ell T_\ell(n-1,k)
              +d(n,k)^\ell T_\ell(n-1,k-1),
\end{equation}
where
\[
 c(n,k)=\alpha n+\beta k+\gamma,
 \qquad d(n,k)=\alpha'n+\beta'k+\gamma',
\]
we take $T_\ell(n_0,k_0)=1$ and zero entries outside
$k_0\le k\le k_0+n-n_0$.
A north step ending at $(k,i)$ has weight $c(i,k)$, and a cross step
ending at $(k,i)$ has weight $d(i,k)$.  Write $\wt(p)$ for the product of
these weights along a path $p$, and $C(p)$ for the set of heights $i$
at which $p$ takes a cross step.

Let $\mathcal P_n$ be the set of all paths from $(k_0,n_0)$ to height $n$.
Define
\begin{equation}\label{stab:path-polynomial}
 \mathcal F_{n,\ell}(z_{n_0+1},\ldots,z_n)
 =\sum_{p\in\mathcal P_n}\wt(p)^\ell\prod_{i\in C(p)}z_i,
 \qquad \mathcal F_{n_0,\ell}=1.
\end{equation}
Equivalently, a cross step ending at height $i$ receives the additional
weight $z_i$, while every scalar step weight is raised to the power $\ell$.
The path decomposition according to the last step gives
\[
 \mathcal F_{n,\ell}(t,\ldots,t)
 =\sum_{j=0}^{n-n_0}T_\ell(n,k_0+j)t^j.
\]
Thus the variables record the cross-step heights. For a prefix with $j$ cross steps, put
\begin{equation}\label{stab:outgoing-weights}
 a_n(j)=c(n,k_0+j),\qquad b_n(j)=d(n,k_0+j+1).
\end{equation}
These are the weights of its two outgoing steps. The following lemma, for which we do not include the proof, allows us the write the last step decomposition in terms of the operator $\Theta=\sum_{i=n_0+1}^{n-1}z_i\partial_{z_i}$.
\begin{lemma}\label{stab: rational}
    For $f=\sum_{j=0}^d f_j$ with $f_j$ homogeneous of degree $j$ in the variables $z_{n_0+1},\dots, z_{n-1}$, and $\Theta=\sum_{i=n_0+1}^{n-1}z_i\partial_{z_i}$, we get
    \begin{equation}
        \Theta(f)=\sum_{j=0}^d jf_j
    \end{equation}
    Equivalently, for any rational function $\lambda(j)$ in $j$ where the denominator does not vanish at $j\in \{0,1,\dots, d\}$, we have
    \begin{equation}
        \lambda(\Theta)=\sum_{j=0}^d \lambda(j)f_j.
    \end{equation}
\end{lemma}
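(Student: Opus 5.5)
The statement has two parts. The first is Euler's identity for the degree operator on a finite sum of homogeneous pieces. The second is its extension to a rational function of $\Theta$. For the first, I would work monomial by monomial. Write $f_j=\sum_{|\mathbf m|=j}c_{\mathbf m}\mathbf z^{\mathbf m}$, where $\mathbf z^{\mathbf m}=\prod_i z_i^{m_i}$ and $|\mathbf m|=\sum_i m_i$. Since $z_i\partial_{z_i}\mathbf z^{\mathbf m}=m_i\mathbf z^{\mathbf m}$, summing over $i=n_0+1,\ldots,n-1$ gives
\[
\Theta\,\mathbf z^{\mathbf m}=|\mathbf m|\,\mathbf z^{\mathbf m}.
\]
By linearity, $\Theta f_j=jf_j$, and summing over $j$ gives $\Theta(f)=\sum_j jf_j$. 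Other variables, such as $z_n$, may appear in the coefficients. They are treated as constants throughout, because $\Theta$ differentiates only in $z_{n_0+1},\ldots,z_{n-1}$.

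For the polynomial case of the second part, I would use that the $f_j$ are eigenvectors of $\Theta$ with eigenvalue $j$. Iterating gives $\Theta^r f=\sum_j j^r f_j$, so $\pi(\Theta)f=\sum_j\pi(j)f_j$ for every polynomial $\pi$. Here $\pi(\Theta)$ is defined as the operator polynomial, and $\Theta^0$ is the identity.

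The only real point to settle is what $\lambda(\Theta)$ means when $\lambda=\pi/\sigma$ is rational. I would define it on the finite-dimensional space $V$ of polynomials in $z_{n_0+1},\ldots,z_{n-1}$ of total degree at most $d$. This space is $\Theta$-invariant, and on it $\Theta$ is diagonalisable with eigenvalues in $\{0,\ldots,d\}$, the degree-$j$ part being the eigenspace for $j$. By hypothesis $\sigma(j)\ne0$ for $j\in\{0,\ldots,d\}$. So $\sigma(\Theta)|_V$ acts on the degree-$j$ part as multiplication by the nonzero scalar $\sigma(j)$, and is therefore invertible with inverse $\sum_j f_j\mapsto\sum_j\sigma(j)^{-1}f_j$.

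I would then set $\lambda(\Theta)=\pi(\Theta)\sigma(\Theta)^{-1}$ on $V$. The two factors commute, since both are polynomials in $\Theta$. Applying the polynomial case to each factor gives $\lambda(\Theta)f=\sum_j\pi(j)\sigma(j)^{-1}f_j=\sum_j\lambda(j)f_j$.

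This value does not depend on how $\lambda$ is written as a quotient. The coefficient $\lambda(j)$ is the value of the rational function at $j$, and that value is representation-independent wherever the denominator is nonzero. No step here is a genuine obstacle; the only care needed is to make the definition of $\lambda(\Theta)$ on $V$ explicit.
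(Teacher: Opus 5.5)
Your proof is correct. The paper omits the proof of this lemma. Its only justification is the remark after \eqref{stab:balanced-recurrence} that ``a function of $\Theta$ acts by its value at $j$ on the homogeneous component of degree $j$'', so it effectively takes the second formula as the definition of $\lambda(\Theta)$.

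You take a different route: you define $\lambda(\Theta)=\pi(\Theta)\sigma(\Theta)^{-1}$ intrinsically on the $\Theta$-invariant space $V$ and derive the componentwise formula from Euler's identity. This shows that the stipulated action agrees with the operator-algebraic one. That consistency is what the proof of Theorem~\ref{stab:general} relies on when it reads the factors of $\lambda_n(\Theta)^\ell$ as compositions of the differential operators $\Theta+a$ and $D-\Theta$ in order to apply Lemma~\ref{stab:euler-operators}.

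Your diagonalisation viewpoint also removes the restriction to rational $\lambda$. Factors such as $\xi_n^{j}$ in \eqref{stab:factored-transition} are not rational expressions in $j$, yet the paper invokes the lemma for them. Since $\Theta|_V$ is diagonalisable with spectrum in $\{0,\ldots,d\}$, any function $g$ on that set equals $p(\Theta)$ for its Lagrange interpolating polynomial $p$, with the same componentwise action.

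One small inconsistency: if you allow $z_n$ in the coefficients, as in your first paragraph, then $V$ is not finite-dimensional over $\mathbb C$. Your explicit componentwise inverse still works verbatim over $\mathbb C[z_n]$. In any case, $\Theta$ is only applied to $\mathcal F_{n-1,\ell}$ and $\mathcal P_{n-1,\ell}$, which do not involve $z_n$.
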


So, the last-step decomposition
can therefore be written as
\begin{equation}\label{stab:path-recurrence}
 \mathcal F_{n,\ell}
 =\bigl[a_n(\Theta)^\ell+z_n b_n(\Theta)^\ell\bigr]
   \mathcal F_{n-1,\ell}.
\end{equation}
Notice the shift by one in $b_n(j)$ as a cross step increases the rank.

\subsection{Finite multiplier sequences}

The P\'olya--Schur theorem characterizes diagonal operators that preserve
real-rootedness \cite{PolyaSchur1914}.  We use its finite-degree form
\cite[Theorem~3.7]{CravenCsordas1977}; see also
\cite[Theorem~4]{KatkovaShapiroVishnyakova2011}.  The following formulation
for homogeneous components follows by homogenization and polarization.

\begin{lemma}\label{stab:finite-multiplier}
Let $\eta_0,\ldots,\eta_d\ge0$.  For $f=\sum_{j=0}^d f_j$, with $f_j$
homogeneous of degree $j$, set
\[
 M_\eta f=\sum_{j=0}^d\eta_j f_j,
 \qquad
 J_d(\eta;t)=\sum_{j=0}^d\binom dj\eta_jt^j.
\]
where $J_d$ is the Jensen polynomial associated with the sequence $\eta_j$. The operator $M_\eta$ preserves real stability, allowing the zero
polynomial, on polynomials with nonnegative coefficients and total degree
at most $d$, in any number of variables, if and only if
$J_d(\eta;t)$ is zero or has only nonpositive real zeros.
\end{lemma}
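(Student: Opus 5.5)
The plan is to reduce the statement to a single-variable diagonal operator acting on a homogenizing variable, and then apply the Borcea--Br\"and\'en symbol criterion. Necessity and sufficiency are handled separately.

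\emph{Necessity.} Apply $M_\eta$ to the univariate polynomial $f(t)=(1+t)^d$. It is real stable, has nonnegative coefficients and degree $d$, and its homogeneous components are $f_j=\binom dj t^j$. Therefore $M_\eta f=J_d(\eta;t)$. If $M_\eta$ preserves stability (allowing $0$), then $J_d(\eta;t)$ is either zero or real-rooted. Since $\eta_j\ge0$, all its coefficients are nonnegative, so a real-rooted $J_d$ cannot vanish at a positive real point. Hence its zeros are nonpositive.

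\emph{Sufficiency.} Let $f$ be real stable with nonnegative coefficients and total degree at most $d$. By the homogenization theorem \eqref{stab:homogenization}, the polynomial
\[
 H(\mathbf z,y)=y^df(\mathbf z/y)=\sum_{j=0}^d f_j(\mathbf z)\,y^{d-j}
\]
is real stable. Under this correspondence, $M_\eta f$ homogenizes to $\sum_j\eta_jf_j(\mathbf z)y^{d-j}$. This means $M_\eta$ becomes the operator $T$ acting only on the variable $y$ by $y^{k}\mapsto\eta_{d-k}y^{k}$ for $0\le k\le d$, with the $\mathbf z$ variables treated as coefficients.

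To show that $T$ preserves stability on polynomials of degree at most $d$ in $y$ (and arbitrary bounded degree in $\mathbf z$), I would invoke the Borcea--Br\"and\'en characterization of stability preservers on $\mathbb C_{\kappa}[\mathbf z,y]$. The relevant operator is $\mathrm{Id}\otimes T$. Its symbol factors as
\[
 \prod_i(z_i+w_i)^{\kappa_i}\cdot T\bigl[(y+w)^d\bigr].
\]
The second factor is
\[
 T\bigl[(y+w)^d\bigr]=\sum_{j=0}^d\binom dj\eta_j\,w^jy^{d-j}=y^dJ_d(\eta;w/y).
\]
This is the homogenization of $J_d$. Because $J_d$ has nonnegative coefficients and only real zeros, it is stable, so its homogenization is stable by \eqref{stab:homogenization}. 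A product of stable polynomials is stable, so the full symbol is stable. Hence $\mathrm{Id}\otimes T$ preserves stability.

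Finally, dehomogenize by specializing $y=1$, which is allowed by \cref{stab:standard-preservers}. This recovers $M_\eta f$, which is therefore stable or identically zero. If $J_d\equiv0$, then all $\eta_j=0$ and $M_\eta f=0$ trivially.

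\emph{Main obstacle.} The step I expect to need the most care is applying the Borcea--Br\"and\'en theorem correctly. Its statement has a second, degenerate alternative: operators whose image has dimension at most two, or whose symbol is stable only after the substitution $w\mapsto-w$. I need to check that our diagonal, nonnegative $T$ is covered by the "symbol stable" branch. I also need to handle the possibility that $T$ kills the polynomial, which is why zero is allowed in the conclusion.

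If one prefers to avoid the full theorem, an alternative is to polarize $y$ into $y_1,\dots,y_d$ as described after \eqref{stab:homogenization}. Then $T$ acts on each $e_k(\mathbf y)$ by scalars, and one can argue via the Grace--Walsh--Szeg\H{o} coincidence theorem. This ultimately reduces to the same symbol computation.
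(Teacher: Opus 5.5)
Your proof is correct. The necessity half is exactly the paper's, but for sufficiency you take a different route.

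The paper does not use the Borcea--Br\"and\'en symbol characterization at all. After homogenizing to degree $d$, it polarizes $y$ into $y_1,\ldots,y_d$ and writes $J_d(\eta;t)=a\prod_{i=1}^d(t+\rho_i)$ with $a>0$ and $\rho_i\ge0$. Comparing coefficients gives $\binom dj\eta_j=a\,e_{d-j}(\rho_1,\ldots,\rho_d)$, hence $M_\eta f=a\widetilde H(\mathbf z,\rho_1,\ldots,\rho_d)$. So $M_\eta f$ is literally a real specialisation of a stable polynomial. This uses only homogenization, polarization and specialisation, but the factorization needs $\deg J_d=d$. When $\deg J_d=q<d$, the paper perturbs to $J_d(\eta;t)(1+\varepsilon t)^{d-q}$ and passes to the limit with Hurwitz's theorem.

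Your route invokes a deeper theorem, but it absorbs the degree deficit automatically: the $y$-part of the symbol is $a\,y^{d-q}\prod_{i=1}^q(w+\rho_iy)$, so no limiting argument is needed.

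The concern in your last paragraph is unfounded. You only use the implication from a real stable symbol to a stability preserver, and you have already verified the symbol is stable. The degenerate rank-two alternative and the $w\mapsto-w$ case matter only for the converse. A zero output is allowed because Borcea and Br\"and\'en count $0$ as stable.

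Finally, your polarization fallback is essentially the paper's proof. It does not reduce to a second symbol computation: specialising $y_i=\rho_i$ finishes it directly.
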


\begin{proof}
Necessity follows by applying the operator to $(1+t)^d$.
For sufficiency, let $f$ be real stable with nonnegative coefficients,
and first suppose that
$J_d(\eta;t)=a\prod_{i=1}^d(t+\rho_i)$, where $a>0$ and $\rho_i\ge0$.
We homogenize $f$ to degree $d$ and then polarize the homogenizing variable.
The resulting stable polynomial is
\[
 \widetilde H(\mathbf z,y_1,\ldots,y_d)
 =\sum_{j=0}^d f_j(\mathbf z)
   \frac{e_{d-j}(y_1,\ldots,y_d)}{\binom dj}.
\]
Comparing coefficients in the factorization of $J_d$ gives
\[
 M_\eta f=a\widetilde H(\mathbf z,\rho_1,\ldots,\rho_d),
\]
which is stable or zero by real specialisation.  If $J_d$ has degree
$q<d$, we can apply this argument to
$J_d(\eta;t)(1+\varepsilon t)^{d-q}$ and let $\varepsilon$ tend to zero from above.
Coefficient limits of stable polynomials are stable or zero, by Hurwitz's
theorem.  The zero polynomial case is immediate.
\end{proof}

\begin{lemma}\label{stab:euler-operators}
Let $f$ be a nonzero real stable polynomial with nonnegative coefficients,
put $\Theta=\sum_i z_i\partial_{z_i}$, and let $D\ge\deg f$ be real.
For every $a>0$, the polynomial $(\Theta+a)f$ is real stable and has
nonnegative coefficients.  The same conclusion holds for $(D-\Theta)f$
unless this polynomial is zero.
\end{lemma}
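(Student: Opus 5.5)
The plan is to derive both assertions from Lemma~\ref{stab:finite-multiplier}, applied with $d=\deg f$ (total degree). Write $f=\sum_{j=0}^d f_j$ with $f_j$ homogeneous of degree $j$. Here $\Theta$ is the full Euler operator in all the variables of $f$, so, exactly as in Lemma~\ref{stab: rational}, $\Theta f_j=jf_j$. Hence both operators are diagonal multipliers on homogeneous components:
\[
 (\Theta+a)f=M_\eta f\ \text{ with }\ \eta_j=j+a,
 \qquad
 (D-\Theta)f=M_{\eta'}f\ \text{ with }\ \eta'_j=D-j .
\]
In both cases the multipliers are nonnegative for $0\le j\le d$: for $\eta$ because $a>0$, and for $\eta'$ because $D\ge d$. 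So the lemma applies once the associated Jensen polynomials are shown to be zero or to have only nonpositive real zeros.

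\textbf{Jensen polynomials.} These are computed with the identity $\sum_j\binom dj j\,t^j=dt(1+t)^{d-1}$. For $\eta_j=j+a$ this gives
\[
 J_d(\eta;t)=a(1+t)^d+dt(1+t)^{d-1}=(1+t)^{d-1}\bigl(a+(a+d)t\bigr),
\]
whose zeros are $-1$ and $-a/(a+d)<0$. For $\eta'_j=D-j$ it gives
\[
 J_d(\eta';t)=D(1+t)^d-dt(1+t)^{d-1}=(1+t)^{d-1}\bigl(D+(D-d)t\bigr).
\]
If $D>d$, the zeros are $-1$ and $-D/(D-d)<0$. If $D=d>0$, the polynomial is $D(1+t)^{d-1}$, with only the zero $-1$. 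If $D=d=0$, then $J_0\equiv 0$. (The case $d=0$, which makes the factor $(1+t)^{d-1}$ meaningless, is handled directly: $J_0$ is the constant $a$ or $D$.) In every case the hypothesis of Lemma~\ref{stab:finite-multiplier} holds, so both $M_\eta f$ and $M_{\eta'}f$ are real stable or zero.

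\textbf{Nonnegativity and nonvanishing.} Since $f$ has nonnegative coefficients and each multiplier is nonnegative, both outputs have nonnegative coefficients. For $(\Theta+a)f$ every multiplier $j+a$ is strictly positive, and $f\neq0$. Thus the coefficient of any monomial appearing in $f$ is multiplied by a positive number, so $(\Theta+a)f\neq0$ and it is real stable. For $(D-\Theta)f$ the only way to obtain zero is when $D=d$ and $f$ is homogeneous of degree $D$, for instance the constant case $D=0$. This is exactly the exception allowed in the statement.

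The only delicate point is the bookkeeping of degenerate cases: the Jensen polynomial dropping degree when $D=d$ (which the lemma already handles through its limiting argument), and the possibility that $(D-\Theta)f=0$. I do not expect any substantive obstacle beyond these checks, because the factorisations above are explicit.
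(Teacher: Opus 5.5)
Your proposal is correct and is essentially the paper's proof: both treat the two operators as degree multipliers $j+a$ and $D-j$, compute the same Jensen polynomials $(1+t)^{d-1}\bigl(a+(a+d)t\bigr)$ and $(1+t)^{d-1}\bigl(D+(D-d)t\bigr)$, invoke Lemma~\ref{stab:finite-multiplier}, and use strict positivity of $j+a$ to rule out annihilation. Your case analysis for $d=0$ and $D=d$, and your description of when $(D-\Theta)f$ vanishes, spell out what the paper calls immediate or leaves to the limiting argument inside that lemma.
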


\begin{proof}
Writing $d=\deg f$, the degree multipliers are $j+a$ and $D-j$.
For $d\ge1$, their Jensen polynomials are
\[
 (1+t)^{d-1}\bigl(a+(a+d)t\bigr),\qquad
 (1+t)^{d-1}\bigl(D+(D-d)t\bigr),
\]
respectively.  We can now apply Lemma \ref{stab:finite-multiplier}; the case $d=0$ is
immediate.  The first operator has strictly positive degree multipliers
and hence cannot annihilate $f$.
\end{proof}

For $A>0$, write $(A)_j=A(A+1)\cdots(A+j-1)$ and
$C^{\underline j}=C(C-1)\cdots(C-j+1)$, with both products equal to one
when $j=0$.

\begin{lemma}\label{stab:hypergeometric}
If $A>0$, $C>d-1$ and $q>0$, the sequences
\[
 q^j\frac{C^{\underline j}}{(A)_j},\qquad
 \frac{q^j}{(A)_j},\qquad q^jC^{\underline j}
 \qquad(0\le j\le d)
\]
are finite multiplier sequences in the sense of
Lemma \ref{stab:finite-multiplier}.
\end{lemma}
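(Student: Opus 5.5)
The plan is to verify the criterion of Lemma~\ref{stab:finite-multiplier} directly for the two "pure" sequences $C^{\underline j}$ and $1/(A)_j$, by identifying their Jensen polynomials with generalized Laguerre polynomials. The mixed sequence then follows by composing operators, and the factor $q^j$ is harmless.

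First, all three sequences are nonnegative on $0\le j\le d$. Indeed $(A)_j>0$ since $A>0$. Also $C^{\underline j}=\prod_{i=0}^{j-1}(C-i)$ with $C-i\ge C-(d-1)>0$ for $i\le d-1$, because $C>d-1$.

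Next, the factor $q^j$ replaces $J_d(\eta;t)$ by $J_d(\eta;qt)$. Since $q>0$, this does not change the signs of the zeros, so I drop $q$ from now on. I use the generalized Laguerre polynomial
\[
L_d^{(\alpha)}(x)=\sum_{j=0}^d\binom{d+\alpha}{d-j}\frac{(-x)^j}{j!},
\]
and the classical fact that for $\alpha>-1$ all its zeros are real, simple and positive.

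For $\eta_j=C^{\underline j}$, put $\alpha=C-d>-1$. Reversing the order of summation gives
\[
J_d(\eta;t)=\sum_j\binom dj C^{\underline{d-j}}t^{d-j}
= d!\,t^d L_d^{(C-d)}(-1/t),
\]
where I use $d!\binom{C}{d-j}/j!=\binom dj C^{\underline{d-j}}$. The zeros of $L_d^{(C-d)}$ are positive, so the zeros of $J_d$ are negative, which is what Lemma~\ref{stab:finite-multiplier} requires.

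For $\eta_j=1/(A)_j$, the Jensen polynomial is
\[
\sum_j\binom dj\frac{t^j}{(A)_j}={}_1F_1(-d;A;-t),
\]
which is a positive multiple of $L_d^{(A-1)}(-t)$. Since $A-1>-1$, its zeros in $x=-t$ are positive, so the zeros in $t$ are negative.

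For the mixed sequence $C^{\underline j}/(A)_j$, I would not compute a ${}_2F_1$. Instead I write $M_{C^{\underline j}/(A)_j}=M_{1/(A)_j}\circ M_{C^{\underline j}}$. Each factor preserves the class of real stable polynomials with nonnegative coefficients and total degree at most $d$, or sends such a polynomial to zero. This uses the sufficiency direction of Lemma~\ref{stab:finite-multiplier} together with the nonnegativity of the multipliers. Hence the composite preserves this class as well. The necessity direction of Lemma~\ref{stab:finite-multiplier}, applied to $(1+t)^d$, then shows that its Jensen polynomial has only nonpositive zeros or is zero.

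The step I expect to be the main obstacle is the careful matching of each Jensen polynomial with the Laguerre normalisation: the index reversal, the choice $\alpha=C-d$, and checking that the hypothesis $C>d-1$ is exactly $\alpha>-1$. The zero location of $L_d^{(\alpha)}$ for $\alpha>-1$ would be quoted as the classical hypergeometric fact.
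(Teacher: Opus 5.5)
Your proof is correct, but it runs in the opposite direction from the paper's.

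The paper starts with the mixed sequence. It identifies the Jensen polynomial as ${}_2F_1(-d,-C;A;qt)$ and locates its zeros via Driver--Jordaan, or via the Pfaff transformation and the Jacobi representation with parameters $(A-1,C-d)$. It then obtains $q^j/(A)_j$ and $q^jC^{\underline j}$ as confluent limits ($q\mapsto q/C$ with $C\to\infty$, respectively $q\mapsto qA$ with $A\to\infty$). This step needs Hurwitz's theorem and the fact that the limiting Jensen polynomials still have degree $d$.

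You instead prove the two confluent cases directly through Laguerre polynomials. Your identities $J_d=\frac{d!}{(A)_d}L_d^{(A-1)}(-t)$ and $J_d=d!\,t^dL_d^{(C-d)}(-1/t)$ are correct. The constant term $\binom Cd>0$ of $L_d^{(C-d)}$ ensures that the reversal keeps degree $d$ and loses no zeros. You then assemble the mixed case from the factorisation $M_{C^{\underline j}/(A)_j}=M_{1/(A)_j}\circ M_{C^{\underline j}}$, using sufficiency in Lemma~\ref{stab:finite-multiplier} for each factor and necessity on $(1+t)^d$ for the composite.

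What each route buys: yours needs only the classical zero location of $L_d^{(\alpha)}$ for $\alpha>-1$, and avoids both the ${}_2F_1$/Jacobi classification and the limiting argument. As a by-product it re-derives that ${}_2F_1(-d,-C;A;qt)$ has only negative zeros, which is the closure of finite multiplier sequences under termwise products. The paper's route is more direct for the sequence actually used in \eqref{stab:affine-weights}, and it describes those zeros explicitly through the Jacobi zeros.
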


\begin{proof}
The Jensen polynomial for the first sequence is the terminating
hypergeometric polynomial
\begin{equation}\label{stab:hypergeometric-symbol}
 \sum_{j=0}^d\binom dj\frac{C^{\underline j}}{(A)_j}(qt)^j
 ={}_2F_1(-d,-C;A;qt).
\end{equation}
Driver and Jordaan's classification
\cite[Theorem~3.2(iv),(v)]{DriverJordaan2002} gives its negative real zeros
in this parameter range.  In this case the location can also be seen
directly from the Pfaff transformation and the Jacobi representation:
\begin{equation}\label{stab:jacobi-symbol}
 {}_2F_1(-d,-C;A;qt)
 =\frac{d!}{(A)_d}(1-qt)^d
   P_d^{(A-1,C-d)}\!\left(\frac{1+qt}{1-qt}\right).
\end{equation}
Both Jacobi parameters exceed $-1$, so all its zeros $x_i$ lie in
$(-1,1)$.  They give the $d$ negative zeros
$t_i=(x_i-1)/(q(x_i+1))$ of \eqref{stab:hypergeometric-symbol}.
This also includes the boundary $C=d$ between the two cases in the cited
classification.

For the second sequence, we let $C\to\infty$ in the first one after
replacing $q$ by $q/C$.  For the third, we let $A\to\infty$ after replacing
$q$ by $qA$.  The Jensen polynomials converge coefficientwise to
polynomials of degree $d$ with positive coefficients and only negative
real zeros.  We can now apply the finite multiplier criterion to complete the proof.
\end{proof}

\begin{lemma}\label{stab:binomial-operator}
For a polynomial $f=\sum_{j=0}^m f_j$, where $f_j$ is homogeneous of
degree $j$, define
\[
 \mathcal B_m f=\sum_{j=0}^m\binom mj f_j.
\]
If $f$ is nonzero, real stable, and has nonnegative coefficients, then
$\mathcal B_m f$ has the same properties.
\end{lemma}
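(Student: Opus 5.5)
We apply Lemma~\ref{stab:finite-multiplier} to the multiplier sequence $\eta_j=\binom mj$, $0\le j\le d$, where $d=\deg f\le m$. The key computation is that the Jensen polynomial factors cleanly. Using $\binom dj\binom mj=\binom mj\binom dj$ and the standard identity relating it to a hypergeometric sum, we have
\[
 J_d(\eta;t)=\sum_{j=0}^d\binom dj\binom mj t^j
 ={}_2F_1(-d,-m;1;t).
\]
This is the first sequence of Lemma~\ref{stab:hypergeometric} with $A=1$, $C=m$ and $q=1$. Indeed, $\binom mj=m^{\underline j}/j!=m^{\underline j}/(1)_j$.

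The hypotheses of Lemma~\ref{stab:hypergeometric} require $A=1>0$, $q=1>0$ and $C=m>d-1$. The last holds because $d\le m$. So $\eta$ is a finite multiplier sequence in the sense of Lemma~\ref{stab:finite-multiplier}, and $\mathcal B_m f=M_\eta f$ is real stable or zero.

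Alternatively, $J_d$ is the Jacobi-type polynomial $(1-t)^dP_d^{(0,m-d)}\bigl((1+t)/(1-t)\bigr)$, so its zeros are visibly negative. This also handles the boundary case $m=d$, where $J_d(t)=\sum_j\binom dj^2t^j$.

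Nonnegativity of coefficients is immediate, since each homogeneous component is multiplied by $\binom mj\ge 0$. To see that the result is nonzero, note that $\binom mj>0$ for every $0\le j\le d\le m$. Some $f_j$ is nonzero and has nonnegative coefficients, so $\binom mj f_j\neq 0$. Because distinct homogeneous components cannot cancel, $\mathcal B_m f\ne0$.

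The only delicate step is matching the parameter range of Lemma~\ref{stab:hypergeometric}, in particular the inequality $C>d-1$ when $d=m$. That lemma explicitly includes the boundary $C=d$ via the Jacobi representation, so no limiting argument is needed. If $f$ is constant ($d=0$), the statement is trivial, since $\mathcal B_m f=f$.
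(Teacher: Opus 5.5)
Your proposal is correct and follows the paper's proof: both take $A=1$, $C=m$, $q=1$ in Lemma~\ref{stab:hypergeometric}, so that the multiplier $m^{\underline j}/(1)_j=\binom mj$ is a finite multiplier sequence. Your checks that $\mathcal B_m f$ is nonzero with nonnegative coefficients, and your explicit Jacobi form $(1-t)^dP_d^{(0,m-d)}\bigl((1+t)/(1-t)\bigr)$, spell out details the paper leaves implicit.
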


\begin{proof}
We take $A=1$, $C=m$ and $q=1$ in Lemma \ref{stab:hypergeometric}.
The resulting multiplier is $m^{\underline j}/(1)_j=\binom mj$.
\end{proof}

\subsection{The affine stability theorem}

We now impose hypotheses on \eqref{stab:affine-recurrence}.  Assume
\begin{equation}\label{stab:affine-signs}
 \alpha,\alpha',\beta\ge0,\qquad -\alpha'\le\beta'\le0,
\end{equation}
as in \cite[Theorem~4]{Shankar2026}, and require strictly positive weights
on all accessible outgoing steps:
\begin{equation}\label{stab:accessible-positivity}
 a_n(j)>0,\quad b_n(j)>0
 \qquad(n>n_0,\ 0\le j\le n-n_0-1).
\end{equation}
The additional arithmetic conditions for our stability theorem are
\begin{equation}\label{stab:integer-increments}
 \frac{\alpha}{\beta}\in\mathbb Z_{\ge0}\quad\text{if }\beta>0,
 \qquad
 \frac{\alpha'}{-\beta'}\in\mathbb Z_{\ge1}\quad\text{if }\beta'<0.
\end{equation}
No ratio is required when its denominator is zero.

For $0\le j\le n-n_0$, define
\begin{equation}\label{stab:general-normalization}
 w_n(0)=1,\qquad
 w_n(j)=\prod_{i=0}^{j-1}\frac{b_n(i)}{a_n(i)},\qquad
 \mathcal P_{n,\ell}
 =\sum_{p\in\mathcal P_n}
   \frac{\wt(p)^\ell}{w_n(|C(p)|)^\ell}\prod_{i\in C(p)}z_i.
\end{equation}
For $n=n_0$, set $\mathcal P_{n_0,\ell}=1$.

\begin{proof}[Proof of Theorem~\ref{stab:general}]
We set $m=n-n_0$.  If $\beta>0$ and $\beta'<0$, we write
\[
 a_n(j)=u_n(j+A_n),\qquad b_n(j)=v_n(C_n-j),
\]
where
\[
 u_n=\beta,\quad
 A_n=\frac{\alpha n+\beta k_0+\gamma}{\beta},\qquad
 v_n=-\beta',\quad
 C_n=\frac{\alpha'n+\beta'(k_0+1)+\gamma'}{-\beta'}.
\]
Then $A_n>0$, $C_n>m-1$, and
$A_n-A_{n-1}=s:=\alpha/\beta$ and
$C_n-C_{n-1}=h:=\alpha'/(-\beta')$ are nonnegative integers.
If $\beta=0$, we instead take $u_n=\alpha n+\gamma$ and omit the factor
$j+A_n$ in all formulas below.  If $\beta'=0$, we take
$v_n=\alpha'n+\gamma'$ and omit the factor $C_n-j$.
The corresponding factors involving $A_n$ or $C_n$ are likewise omitted.
With this convention, the normalisation becomes
\begin{equation}\label{stab:affine-weights}
 w_n(j)=\left(\frac{v_n}{u_n}\right)^j
          \frac{C_n^{\underline j}}{(A_n)_j}.
\end{equation}

A path ending at height $n-1$ with $j$ cross steps acquires normalised weight (without raising to the power $\ell$)
\[
 a_n(j)\frac{w_{n-1}(j)}{w_n(j)}
 =b_n(j)\frac{w_{n-1}(j)}{w_n(j+1)}
 =:\lambda_n(j)
\]
under either choice of last step. Therefore, by Lemma \ref{stab: rational},
\begin{equation}\label{stab:balanced-recurrence}
 \mathcal P_{n,\ell}
 =(1+z_n)\lambda_n(\Theta)^\ell\mathcal P_{n-1,\ell}.
\end{equation}
Here a function of $\Theta$ acts by its value at $j$ on the homogeneous
component of degree $j$.

The first step gives
$\mathcal P_{n_0+1,\ell}=a_{n_0+1}(0)^\ell(1+z_{n_0+1})$.
For $n\ge n_0+2$, we put
$\xi_n=v_{n-1}u_n/(u_{n-1}v_n)>0$.
Cancellation of rising and falling factorials in
\eqref{stab:affine-weights} gives
\begin{equation}\label{stab:factored-transition}
 \lambda_n(j)
 =u_n\xi_n^j
   \frac{(A_{n-1}+j)_{s+1}}{(A_{n-1})_s}
   \frac{(C_{n-1}-j+1)_h}{(C_{n-1}+1)_h}.
\end{equation}
The factor $\xi_n^j$ rescales all old variables by the positive number
$\xi_n$.  Every numerator factor involving $A_{n-1}+j$ corresponds
to $\Theta+a$ with $a>0$.  Every numerator factor involving
$C_{n-1}-j+1$ corresponds to $D-\Theta$, with
$D\ge C_{n-1}+1>m-1$.
Since $\deg\mathcal P_{n-1,\ell}=m-1$, all these operators preserve
nonnegative real stability by Lemma \ref{stab:euler-operators}.
They have strictly positive multipliers on the degrees in question.
Their $\ell$-fold compositions, followed by multiplication by $1+z_n$,
prove the stability of $\mathcal P_{n,\ell}$ inductively.

Finally, \eqref{stab:affine-weights} is one of the finite multiplier
sequences in Lemma \ref{stab:hypergeometric}; if both rank slopes vanish,
it is a positive geometric sequence and simply rescales the variables.
Restoring the original homogeneous components gives
\[
 \mathcal F_{n,\ell}=M_{w_n}^{\,\ell}\mathcal P_{n,\ell}.
\]
This proves its stability.  Diagonal specialisation proves real-rootedness
of the shifted row polynomial.  Its positive coefficients and positive
constant coefficient exclude nonnegative zeros.
\end{proof}

Table~\ref{stab:applications} lists coefficient pairs covered by
Theorem~\ref{stab:general}, selected from
\cite[Table~1]{Shankar2026}, together with the parametrised Eulerian
specialisation.  The names in the first column refer to the classical
$\ell=1$ recurrences; the stated conclusions hold after raising both
step weights to every positive integer power $\ell$.
The displayed initial vertices ensure positive accessible weights.
The Stirling cycle, Stirling subset, and Lah families also admit the
$(r,r)$ initialization, $r\ge1$, used for their $(\ell,r)$-variants
in~\cite{Shankar2026}.  In the row labelled $S(r,n,k)$, the
initial vertex $(1,1)$ corresponds to the reindexing
$T(n,k)=S_r(n+r-1,k+r-1)$ of the ordinary $r$-Stirling subset numbers;
see also~\cite[Remark~16]{Shankar2026}.

\begin{table}[htbp]
\centering
\small
\setlength{\tabcolsep}{4pt}
\renewcommand{\arraystretch}{1.19}
\begin{tabularx}{\textwidth}{@{}Xcccl@{}}
\toprule
Recurrence family & $c(n,k)$ & $d(n,k)$ & $(k_0,n_0)$
 & Range covered \\
\midrule
Parametrised Eulerian & $k+1$ & $n-k$ & $(0,1)$ & All $\ell\ge1$ \\
$m$-Stirling descents ($m$-order Eulerian) & $k+1$ & $m(n-1)-k+1$ & $(0,1)$
 & $m\in\mathbb Z_{\ge1}$ \\
$1/q$-Eulerian & $qk+1$ & $q(n-k)$ & $(0,1)$ & $q>0$ \\
Stirling subset & $k$ & $1$ & $(1,1)$ & All $\ell\ge1$ \\
Lah & $n+k-1$ & $1$ & $(1,1)$ & All $\ell\ge1$ \\
Generalised Lah--Stirling & $2n+k-2$ & $1$ & $(1,1)$
 & All $\ell\ge1$ \\
$S(r,n,k)$ & $k+r-1$ & $1$ & $(1,1)$ & $r\in\mathbb Z_{\ge1}$ \\
Binomial coefficients & $1$ & $1$ & $(0,0)$ & All $\ell\ge1$ \\
Stirling cycle & $n-1$ & $1$ & $(1,1)$ & All $\ell\ge1$ \\
Holiday, first kind & $2n+k-1$ & $1$ & $(0,0)$
 & All $\ell\ge1$ \\
Holiday, second kind & $2n+k$ & $1$ & $(0,0)$
 & All $\ell\ge1$ \\
\bottomrule
\end{tabularx}
\caption{Coefficient pairs and admissible initial vertices for the
powered recurrence.  Every row has stable cross-step
polynomials and normalised cross-step polynomials.  The shifted row
polynomials have only negative real zeros.  The $m$-order Eulerian
row is the $m$-Stirling descent-history recurrence of
Section~\ref{sec:stirling}.}
\label{stab:applications}
\end{table}

For the two specialisations studied below, the normalisation from
\eqref{stab:general-normalization} simplifies to
\[
\begin{array}{c|c|c}
\text{family}&(a_n(j),b_n(j))&w_n(j)\\ \hline
\text{parametrised Eulerian}&(j+1,n-1-j)&\displaystyle\binom{n-1}{j}\\[4pt]
m\text{-Stirling histories}&(j+1,m(n-1)-j)&\displaystyle\binom{m(n-1)}{j}
\end{array}
\]
Their multivariate stability statements therefore follow directly
from Theorem~\ref{stab:general}.

\begin{remark}\label{stab:scope}
The arithmetic conditions \eqref{stab:integer-increments} are used to
factor the transition operator in \eqref{stab:factored-transition}.
The theorem does not assert stability throughout the larger real
parameter range of the log-concavity theorem in \cite{Shankar2026}.
\end{remark}

\begin{remark}[General triangular arrays]\label{stab:general-arrays}
The balancing identity \eqref{stab:balanced-recurrence} does not require
affine coefficients.  For arbitrary positive outgoing weights
$a_n(j),b_n(j)$, use \eqref{stab:general-normalization} and
$\lambda_n(j)=a_n(j)w_{n-1}(j)/w_n(j)$.
The same proof gives stability for every positive integer $\ell$ whenever
both Jensen polynomials
\[
 \sum_{j=0}^{m-1}\binom{m-1}{j}\lambda_n(j)t^j,
 \qquad
 \sum_{j=0}^{m}\binom mj w_n(j)t^j
 \qquad(m=n-n_0)
\]
have only negative real zeros at every step, with constant polynomials
allowed.  Thus extending the result to a further triangular array requires
checking both the transition multiplier and the multiplier that restores
the original coefficients.
\end{remark}

\section{Parametrised Eulerian polynomials}\label{alg:section}
We prove Theorem~\ref{main:eulerian}.  Its stability assertion follows
directly from the Eulerian specialisation in Table~\ref{stab:applications}.

\subsection{Real stability and real-rootedness}

The binomial normalisation below is the multivariate form of
Alexandersson's univariate normalisation~\cite[Section~5]{Alexandersson2026}.
Theorem~\ref{main:eulerian} gives a stable refinement and strengthens
the consecutive interlacing to strict interlacing.
Recall that
\[
 F_{n,\ell}(z_2,\ldots,z_n)
 =\sum_{e\in\{0,1\}^{n-1}}c(e)^{\ell}\prod_{i:e_i=1}z_i,
 \qquad F_{1,\ell}=1.
\]
Here the entries of $e$ are indexed by $2,\ldots,n$. In particular,
$F_{n,\ell}(t,\ldots,t)=A_{n,\ell}(t)$.

We also introduce the normalised polynomial
\begin{equation}\label{stab:normalized-definition}
 P_{n,\ell}(z_2,\ldots,z_n)
 =\sum_{e\in\{0,1\}^{n-1}}
 \frac{c(e)^{\ell}}{\binom{n-1}{|e|}^{\ell}}
 \prod_{i:e_i=1}z_i.
\end{equation}
Its diagonal specialisation will be denoted by
\[
 \widehat A_{n,\ell}(t)
 =P_{n,\ell}(t,\ldots,t)
 =\sum_{k=0}^{n-1}\frac{A_{\ell}(n,k)}{\binom{n-1}{k}^{\ell}}t^k.
\]

\begin{proof}[Proof of Theorem~\ref{main:eulerian}\textup{(i)}]
The Eulerian row of Table~\ref{stab:applications} identifies $F_{n,\ell}$
and $P_{n,\ell}$ with the two polynomials of
Theorem~\ref{stab:general}, which proves stability.
Positive rescaling and diagonal specialisation give the univariate
assertions; their positive coefficients and constant terms exclude
nonnegative zeros.  Dividing $F_{n,\ell}$ by $F_{n,\ell}(1,\ldots,1)$
gives the probability generating polynomial of the common block-minimum
set, so this distribution is strongly Rayleigh.
\end{proof}

For the interlacing argument, we record the specialisations of
\eqref{stab:path-recurrence} and \eqref{stab:balanced-recurrence}:
\begin{align}
 F_{n,\ell}
 &=\left[(\Theta+1)^\ell+z_n(n-1-\Theta)^\ell\right]F_{n-1,\ell},
 \label{stab:unnormalized-recurrence}\\
 P_{n,\ell}
 &=\frac{1+z_n}{(n-1)^\ell}
   \bigl[(\Theta+1)(n-1-\Theta)\bigr]^\ell P_{n-1,\ell},
 \label{stab:normalized-recurrence}
\end{align}
where $\Theta=\sum_{i=2}^{n-1}z_i\partial_{z_i}$ and
$F_{1,\ell}=P_{1,\ell}=1$.

\subsection{Strict interlacing}\label{int:section}

We next compare consecutive rows for a fixed parameter. The argument uses
two intermediate polynomials: one shifts every root to the right and the
other shifts every root to the left. Write $\theta=t\,d/dt$.

\begin{lemma}\label{int:root-movement}
 Let $f$ have positive leading coefficient and simple negative zeros
 $\rho_1<\cdots<\rho_d<0$, where $d\geq1$, and let $a>0$.
 Then $(\theta+a)f$ has one simple zero in each of the intervals
\[
 (\rho_1,\rho_2),\ldots,(\rho_{d-1},\rho_d),(\rho_d,0),
\]
and $(d+a-\theta)f$ has one simple zero in each of
\[
 (-\infty,\rho_1),(\rho_1,\rho_2),\ldots,(\rho_{d-1},\rho_d).
\]
Both transformed polynomials have degree $d$ and positive leading
coefficient.
\end{lemma}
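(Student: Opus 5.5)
The plan is to write $(\theta+a)f$ as a conjugated derivative and count zeros with Rolle's theorem. For $t<0$ the identity
\[
 (\theta+a)f(t)=t\,f'(t)+af(t)=|t|^{1-a}\,\frac{d}{dt}\Bigl(\pm|t|^{a}f(t)\Bigr)
\]
holds, with a fixed sign. Concretely, put $g(t)=(-t)^{a}f(t)$ on $(-\infty,0)$. Then $g'(t)=(-t)^{a-1}\bigl(-af(t)+(-t)f'(t)\bigr)=-(-t)^{a-1}(\theta+a)f(t)$. So on $(-\infty,0)$ the zeros of $(\theta+a)f$ are exactly the zeros of $g'$.

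\emph{The first operator.} The function $g$ vanishes at each $\rho_i$. Since $a>0$, $g(t)\to0$ as $t\to0^-$. Rolle's theorem applied to $g$ on $[\rho_i,\rho_{i+1}]$ and on $[\rho_d,0)$ gives a zero of $g'$ in each of the $d$ intervals $(\rho_1,\rho_2),\dots,(\rho_{d-1},\rho_d),(\rho_d,0)$. For the endpoint $0$, extend $g$ continuously by $g(0)=0$, which is allowed because $a>0$.

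Next compute the degree and count. The coefficient of $t^j$ in $(\theta+a)f$ is $(j+a)f_j$, so the polynomial has degree exactly $d$ with leading coefficient $(d+a)f_d>0$. It therefore has at most $d$ zeros, and we have located $d$ distinct ones, one per disjoint open interval. Hence all its zeros are simple and lie one in each interval. None of them is one of the $\rho_i$, since $(\theta+a)f(\rho_i)=\rho_if'(\rho_i)\ne0$ by simplicity and $\rho_i\neq0$.

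\emph{The second operator.} Here the roles of $0$ and $-\infty$ swap. Put $h(t)=(-t)^{-(d+a)}f(t)$ on $(-\infty,0)$. A direct computation gives $h'(t)=(-t)^{-(d+a)-1}\bigl((d+a)f(t)-tf'(t)\bigr)$, so the zeros of $h'$ are exactly the zeros of $(d+a-\theta)f$ on $(-\infty,0)$. The function $h$ vanishes at each $\rho_i$. Since $\deg f=d<d+a$, also $h(t)\to0$ as $t\to-\infty$. Rolle's theorem on $[\rho_i,\rho_{i+1}]$ gives a zero in each inner interval. On $(-\infty,\rho_1]$ a generalised Rolle argument applies: $h$ is nonzero on $(-\infty,\rho_1)$ and tends to $0$ at both ends, so $|h|$ attains an interior maximum there, which is a critical point. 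This yields $d$ zeros.

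The coefficient of $t^j$ in $(d+a-\theta)f$ is $(d+a-j)f_j$. The leading coefficient is $af_d>0$, so the degree is $d$. The same counting argument as before gives simplicity and exactly one zero per interval.

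The only delicate point is the endpoint behaviour, namely $g\to0$ at $0^-$ and $h\to0$ at $-\infty$. This is exactly where the hypothesis $a>0$ is used in each case. I will also check the sign conventions for $(-t)^{\cdot}$ on the negative axis, though these do not affect the location of the zeros.
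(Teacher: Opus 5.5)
Your argument is correct, but it takes a different route from the paper.

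\textbf{The paper's route.} The paper introduces no auxiliary functions. It evaluates the transformed polynomials at the zeros of $f$, where $((\theta+a)f)(\rho_i)=\rho_if'(\rho_i)$ and $((d+a-\theta)f)(\rho_i)=-\rho_if'(\rho_i)$. These values alternate in sign because the $\rho_i$ are simple. It then closes the outer interval using a sign at the free endpoint: $af(0)>0$ at $t=0$ for the first operator, and the leading coefficient $af_d$ at $-\infty$ for the second. The intermediate value theorem and the degree count finish the proof.

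\textbf{Your route.} You realise both operators on $(-\infty,0)$ as conjugated derivatives, namely $(\theta+a)f=-(-t)^{1-a}\frac{d}{dt}\bigl((-t)^af\bigr)$ and $(d+a-\theta)f=(-t)^{d+a+1}\frac{d}{dt}\bigl((-t)^{-(d+a)}f\bigr)$. You then apply Rolle's theorem. The extra zero comes from the weighted function tending to zero at $0^-$ or at $-\infty$. Your derivative computations, the two endpoint limits, and the interior-maximum argument on $(-\infty,\rho_1)$ are all correct. Your closing remark that $(\theta+a)f(\rho_i)\neq0$ is harmless but unnecessary, since you located the zeros in open intervals.

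\textbf{Comparison.} The paper's version is shorter and uses only polynomial sign evaluations. Yours makes the role of $a>0$ transparent: it is exactly what makes the weight vanish at the free endpoint. Yours also does not need $f'(\rho_i)\neq0$ to produce the interlacing zeros, so it adapts to multiple zeros in the usual Laguerre fashion. The cost is working with non-integral real powers and a Rolle argument on an unbounded interval.
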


\begin{proof}
 At every zero of $f$,
\[
 ((\theta+a)f)(\rho_i)=\rho_i f'(\rho_i),
 \qquad
 ((d+a-\theta)f)(\rho_i)=-\rho_i f'(\rho_i).
\]
The signs alternate with $i$. The first polynomial is positive at zero,
and the second has leading coefficient equal to $a$ times that of $f$.
These endpoint signs give a zero in each indicated interval.
Each transformed polynomial has degree $d$, so these $d$ distinct zeros
account for all its zeros and are simple.
\end{proof}

For two real-rooted polynomials of the same degree, we say that their
zeros \emph{weakly interlace} if the ordered zeros alternate, allowing
equalities. We use the following Hermite-Biehler criterion
\cite[Theorem~4.11]{BBL2009}: if $g(t)+z h(t)$ is real stable and
$g,h$ are nonzero real polynomials, then their zeros interlace.

\begin{proof}[Proof of Theorem~\ref{main:eulerian}\textup{(ii)}]
 The initial polynomial is $A_{2,\ell}(t)=1+t$. Suppose that
 $f=A_{n,\ell}$ has simple negative zeros $\alpha_1<\cdots<\alpha_d$,
 where $d=n-1$. We set
\begin{equation}\label{int:intermediate-polynomials}
 U=(\theta+1)^{\ell} f,
 \qquad V=(n-\theta)^{\ell} f.
\end{equation}
The recurrence gives
\begin{equation}\label{int:row-recurrence}
 A_{n+1,\ell}(t)=U(t)+tV(t).
\end{equation}
Both $U$ and $V$ have degree $d$ and positive leading coefficient.
By repeated application of Lemma \ref{int:root-movement}, their zeros
$u_1<\cdots<u_d$ and $v_1<\cdots<v_d$ are simple and negative, and
\begin{equation}\label{int:coordinatewise}
 v_i<\alpha_i<u_i\qquad(1\leq i\leq d).
\end{equation}
Here we have iterated only the coordinatewise root inequalities: each
application of $\theta+1$ moves its $i$th zero to the right, and each
application of $n-\theta=d+1-\theta$ moves it to the left.

On the other hand,~\Cref{stab:main} and
\eqref{stab:unnormalized-recurrence} show that
\begin{equation}\label{int:stable-pencil}
 F_{n+1,\ell}(\underbrace{t,\ldots,t}_{n-1\text{ variables}},z)
 =U(t)+zV(t)
\end{equation}
is real stable. The preceding Hermite--Biehler criterion implies weak interlacing
of $U$ and $V$. The inequality $v_1<u_1$ fixes its orientation, giving
\begin{equation}\label{int:intermediate-order}
 v_1\leq u_1\leq v_2\leq u_2\leq\cdots\leq v_d\leq u_d.
\end{equation}
Combining \eqref{int:coordinatewise} and
\eqref{int:intermediate-order}, we obtain
\[
 u_{i-1}\leq v_i<\alpha_i<u_i\leq v_{i+1},
\]
with nonexistent endpoints omitted. Exactly $i-1$ zeros of $U$ and
exactly $i$ zeros of $V$ therefore lie to the left of $\alpha_i$.
Their leading coefficients are positive, so
\[
 \operatorname{sgn}U(\alpha_i)=(-1)^{d-i+1},
 \qquad
 \operatorname{sgn}V(\alpha_i)=(-1)^{d-i}.
\]
Since $\alpha_i<0$, the two terms in
$U(\alpha_i)+\alpha_iV(\alpha_i)$ have the same nonzero sign. Hence
\begin{equation}\label{int:signs-at-old-roots}
 \operatorname{sgn}A_{n+1,\ell}(\alpha_i)=(-1)^{d-i+1}.
\end{equation}
The constant and leading coefficients of $A_{n+1,\ell}$ are both $1$.
Together with \eqref{int:signs-at-old-roots}, the signs at $-\infty$
and at zero give one zero in each of
\[
 (-\infty,\alpha_1),\quad
 (\alpha_1,\alpha_2),\ldots,(\alpha_{d-1},\alpha_d),\quad
 (\alpha_d,0).
\]
These $d+1$ distinct zeros exhaust the degree. They are consequently
simple and satisfy \eqref{int:strict-chain}, completing the induction.
\end{proof}

\begin{proof}[Proof of Theorem~\ref{main:eulerian}\textup{(iii)}]
 The case $n=1$ is immediate. For $n\geq2$, we set
 $q=A_{n+1,\ell}$ and $f=A_{n,\ell}$, and write
 $\alpha_1<\cdots<\alpha_{n-1}$ for the zeros of $f$.
 Both are monic, and strict interlacing
 gives the partial-fraction expansion
\[
 \frac{q(t)}{f(t)}
 =t+b-\sum_{i=1}^{n-1}\frac{c_i}{t-\alpha_i},
 \qquad c_i=-\frac{q(\alpha_i)}{f'(\alpha_i)}>0.
\]
This ratio has positive imaginary part in the upper half-plane, so
$q(t)+zf(t)$ cannot vanish when both $t$ and $z$ are in that
half-plane. On each real interval between consecutive poles, including
the two unbounded intervals, its derivative is
\[
 1+\sum_{i=1}^{n-1}\frac{c_i}{(t-\alpha_i)^2}>0,
\]
and its range is all of $\mathbb R$. Thus $q/f=-s$ has precisely one
simple solution in each interval. Only the rightmost interval intersects
$[0,\infty)$, and $q(0)/f(0)=1$ determines the asserted sign of its
solution.
\end{proof}

\subsection{Gamma coefficients and the Strong Rayleigh property}
\label{sec:gamma-consequences}

Theorem~\ref{main:eulerian}\textup{(i)} recovers the known
gamma-positivity of $A_{n,\ell}$.  Indeed,
Proposition~\ref{prelim:palindromicity} gives palindromicity and
nonnegative coefficients, so Br\"and\'en's criterion
\cite[Remark~3.1]{Branden2015} gives nonnegative coefficients in
\eqref{alg:gamma-expansion}.  They are integers by successive
coefficient comparison, since $t^j(1+t)^{n-1-2j}$ has lowest term
$t^j$ with coefficient $1$.
The stronger real-rootedness of $\Gamma_{n,\ell}$ is included in
Alexandersson's theorem~\cite[Theorem~5.1]{Alexandersson2026}.
Newton's inequalities imply log-concavity and unimodality of the
coefficients of $A_{n,\ell}$; see \cite[Section~1]{Branden2015}.

In probabilistic terms, the stable polynomial $F_{n,\ell}$ gives the
strongly Rayleigh measure
\[
 \mu_{n,\ell}(S)=\frac{c(\mathbf1_S)^{\ell}}{A_{n,\ell}(1)}
 \qquad(S\subseteq\{2,\ldots,n\}).
\]
Here $\mathbf1_S$ is the binary word with ones in the positions of $S$.
This is the distribution of the common block-minimum set, with $1$
removed, under the uniform measure on $\mathcal Q_{n,\ell}$.

\section{Combinatorial interpretations of the gamma coefficients}\label{sec:gamma-action}
We prove Theorem~\ref{main:gamma} by extending unary flips on a single
increasing binary tree (the Foata-Strehl group action) to synchronised flips on pairs.

\subsection{Unary flips for a single tree}

We first recall the tree form of the modified Foata--Strehl action;
see \cite{FoataStrehl1974,Branden2008,Petersen2015}.  A vertex with
two children is \emph{binary}, and a vertex with exactly one child is
\emph{unary}.  For $v\in[n]$, define $\phi_v(T)$ by moving the unique
child subtree of $v$ to the opposite side when $v$ is unary, and by
fixing $T$ otherwise.  The parent relations do not change.  Therefore
the unary vertices remain the same, each $\phi_v$ is an involution,
and the maps $\phi_v$ commute.  They define an action of
$(\mathbb Z/2\mathbb Z)^n$ on $\mathcal T_n$.

\begin{proposition}\label{fs:single-tree}
An orbit whose trees have $b$ binary vertices and $u$ unary vertices
has enumerator
\[
 t^b(1+t)^u=t^b(1+t)^{n-1-2b}.
\]
Consequently, $\gamma_{n,1,k}$ counts increasing plane binary trees
with $k$ binary vertices and with every unary child on the right.
\end{proposition}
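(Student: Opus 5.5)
The orbit enumerator is computed directly, and then the trees in an orbit are matched with the terms of the gamma expansion \eqref{alg:gamma-expansion} at $\ell=1$, using \eqref{tree:enumerator}.

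\emph{Orbit enumerator.} Fix an orbit $\mathcal O$. Each $\phi_v$ preserves all parent relations, so every tree in $\mathcal O$ has the same underlying (non-plane) increasing tree. In particular, all trees in the orbit have the same binary vertices, say $b$ of them, and the same unary vertices, say $u$ of them. A left child is either a child of a binary vertex, and each binary vertex has exactly one left child, or the unique child of a unary vertex whose child sits on the left. Hence
\[
 |L(T)| = b + \bigl|\{v \text{ unary}: v\text{'s child is on the left in }T\}\bigr|.
\]
The group $(\mathbb Z/2\mathbb Z)^n$ acts on $\mathcal O$ through the generators $\phi_v$ with $v$ unary, and these toggle the sides of the unary children independently. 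The orbit is therefore in bijection with the $2^u$ choices of sides, and summing gives $t^b(1+t)^u$.

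\emph{Vertex count.} Counting edges gives $n-1 = 2b + u$, since binary vertices contribute two edges and unary vertices contribute one. Thus $u = n-1-2b$, which proves the displayed formula.

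\emph{Gamma coefficients.} Each orbit contains exactly one tree in which every unary child is on the right, namely the tree with all toggles set to ``right''. Summing over orbits and using \eqref{tree:enumerator} with $\ell=1$ gives
\[
 A_{n,1}(t) = \sum_{k} N_k\, t^k (1+t)^{n-1-2k},
\]
where $N_k$ is the number of such canonical trees with $k$ binary vertices. The gamma expansion of a palindromic polynomial is unique, so $\gamma_{n,1,k} = N_k$.

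The only step needing care is the bijection between an orbit and the toggle choices. It rests on two facts: flips at different vertices commute, and flipping $v$ changes only the side of $v$'s own child, which leaves the set of unary vertices unchanged. Both are immediate because parent relations are preserved.
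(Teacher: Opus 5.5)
Your proposal is correct and follows essentially the same route as the paper: within an orbit the unary-edge orientations vary independently, which gives $t^b(1+t)^u$; the edge count gives $2b+u=n-1$; and the unique all-right representative of each orbit, together with uniqueness of the gamma expansion, identifies $\gamma_{n,1,k}$. You are slightly more explicit than the paper about why an orbit is in bijection with the $2^u$ side choices and about using \eqref{tree:enumerator} at $\ell=1$ as the input, but the argument is the same.
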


\begin{proof}
Within an orbit, the orientation of each unary edge can be chosen
independently, while the left and right children at binary vertices
are fixed.  Each binary vertex contributes one left child.  Each
unary edge contributes either zero or one left child, giving the
factor $1+t$.  Counting edges gives $2b+u=n-1$.  Every orbit contains
a unique tree in which all unary children are on the right, and its
number of left children is $b$.  Summing the orbit enumerators proves
the gamma-coefficient interpretation.
\end{proof}

\subsection{Sibling graphs and synchronised flips}

For $\mathbf T\in\mathcal T_{n,\ell}$, its \emph{sibling graph}
$G(\mathbf T)$ is the simple graph on $\{2,\ldots,n\}$ with an edge
$\{i,j\}$ whenever $i$ and $j$ are siblings in at least one coordinate
tree.  Each tree contributes a matching to this graph.  Siblings have
opposite left--right status, so $G(\mathbf T)$ is bipartite with
bipartition
\[
 L(\mathbf T)\ \sqcup\
 \bigl(\{2,\ldots,n\}\setminus L(\mathbf T)\bigr).
\]

The graph records all restrictions on synchronised orientations.
To make this precise, let $\mathcal U_{n,\ell}$ be the set of ordered
$\ell$-tuples of increasing rooted trees on $[n]$ having at most two
children at each vertex, with no left--right order.  Define $G(\mathbf
U)$ by the same sibling rule.  For a connected bipartite component
$C$, write $a_C,b_C$ for its two part sizes; for an isolated vertex,
these sizes are $0$ and $1$.

\begin{proposition}\label{tree:orientation-sum}
A tuple $\mathbf U\in\mathcal U_{n,\ell}$ admits a synchronised plane
orientation if and only if $G(\mathbf U)$ is bipartite.  Moreover,
\begin{equation}\label{tree:graph-enumerator}
 A_{n,\ell}(t)=
 \sum_{\substack{\mathbf U\in\mathcal U_{n,\ell}\\
                  G(\mathbf U)\text{ bipartite}}}
 \prod_{C\in\operatorname{Comp}(G(\mathbf U))}
 (t^{a_C}+t^{b_C}).
\end{equation}
\end{proposition}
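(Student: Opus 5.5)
The plan is to work one unordered tuple $\mathbf U\in\mathcal U_{n,\ell}$ at a time. I would describe all plane orientations of its coordinate trees whose left-child sets coincide, and count them by $|L|$. A plane orientation of a single unordered increasing tree is a choice, at each vertex, of a side for each child. At a vertex with two children the two children receive opposite sides. At a vertex with one child the side is free. The status of a label $v\in\{2,\dots,n\}$ in coordinate $a$ is the side it occupies under its parent in $U_a$.

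Synchronisation asks that each label has the same status in every coordinate. Equivalently, it is a single function $\sigma:\{2,\dots,n\}\to\{L,R\}$ that induces every coordinate orientation. Given $\sigma$, each coordinate orientation is determined: place $v$ on side $\sigma(v)$. This is a valid plane tree exactly when no two siblings in any coordinate share a side. So synchronised orientations of $\mathbf U$ correspond bijectively to functions $\sigma$ with $\sigma(i)\neq\sigma(j)$ for every edge $\{i,j\}$ of $G(\mathbf U)$, that is, to proper $2$-colourings of $G(\mathbf U)$.

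This gives the first assertion at once: a proper $2$-colouring exists if and only if $G(\mathbf U)$ is bipartite. For the enumerator, suppose $G(\mathbf U)$ is bipartite. On each connected component $C$ a proper $2$-colouring is determined up to swapping the two colour classes. Hence the colouring of $C$ puts either $a_C$ or $b_C$ of its vertices on the left. An isolated vertex is unconstrained and contributes $t^0+t^1$, which matches the convention $(a_C,b_C)=(0,1)$. The components are independent, so the sum of $t^{|L|}$ over the synchronised orientations of $\mathbf U$ is $\prod_C(t^{a_C}+t^{b_C})$.

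Finally, I would sum over $\mathbf U$. Forgetting orientations maps $\mathcal T_{n,\ell}$ onto the bipartite tuples in $\mathcal U_{n,\ell}$, and the fibres are exactly the synchronised orientations just counted. By \eqref{tree:enumerator} of Theorem~\ref{main:models}(i), $A_{n,\ell}(t)=\sum_{\mathbf T}t^{|L(\mathbf T)|}$, which yields \eqref{tree:graph-enumerator}. The only delicate step is the bijection between synchronised orientations and proper colourings, together with the fact that distinct $\sigma$ give distinct plane tuples. Distinctness holds because $\sigma$ is recovered from $L(\mathbf T)$, so the fibre count is exact.
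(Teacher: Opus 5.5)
Your proposal is correct and follows essentially the same route as the paper. Both identify synchronised orientations of $\mathbf U$ with proper two-colourings of $G(\mathbf U)$, take the two colourings of each component to get the factor $t^{a_C}+t^{b_C}$, and sum over $\mathbf U$ using \eqref{tree:enumerator}. Your remarks on the isolated-vertex convention and on recovering the colouring from $L(\mathbf T)$ spell out details the paper leaves implicit.
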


\begin{proof}
We assign to each nonroot label its common left--right status in all
coordinates.  This assignment gives a valid plane orientation
precisely when siblings receive opposite statuses, that is, when it
is a proper two-colouring of $G(\mathbf U)$.  Each connected
bipartite component has exactly two such colourings, contributing
$t^{a_C}$ and $t^{b_C}$.  Choices on different components are
independent.  Summing over $\mathbf U$ proves the identity.
\end{proof}

For a connected component $C$ of $G(\mathbf T)$, define its
\emph{component flip} by reversing the left--right status of every
label in $C$, simultaneously in every coordinate.  More explicitly,
when a label in $C$ has a sibling in a given tree, that sibling also
belongs to $C$, and the two child subtrees are exchanged.  When it
has no sibling, its subtree is moved to the other side of its
parent.  This definition includes unary edges.  Parent relations
and labels do not change, so the sibling graph and its components
remain unchanged.  The flip is an involution on synchronised tuples.

We now specialise to $\ell=2$.

\begin{lemma}\label{fs:component-balance}
Every component of $G(\mathbf T)$ for $\mathbf T\in\mathcal T_{n,2}$
is a path or an even cycle.  An odd component of size $2a+1$ has
bipartition sizes $a,a+1$, and an even component of size $2b$ has
bipartition sizes $b,b$.
\end{lemma}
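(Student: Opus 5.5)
The plan is to read the structure of $G(\mathbf T)$ directly from the fact, noted above, that each coordinate tree contributes a matching. For $\mathbf T=(T_1,T_2)\in\mathcal T_{n,2}$, let $M_a$ be the set of sibling pairs in $T_a$, for $a=1,2$. In an increasing plane binary tree each nonroot label has at most one sibling, so $M_a$ is a matching on $\{2,\ldots,n\}$. The graph $G(\mathbf T)$ is the simple graph underlying $M_1\cup M_2$; an edge lying in both matchings is counted once. Hence every vertex of $G(\mathbf T)$ has degree at most two, with at most one incident edge from each matching. A connected graph of maximum degree at most two is a path (possibly a single vertex) or a cycle. In a cycle every vertex has degree exactly two, so its two incident edges come one from $M_1\setminus M_2$ and one from $M_2\setminus M_1$. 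The edges therefore alternate between the two matchings, which forces even length. Alternatively, evenness follows from the bipartiteness of $G(\mathbf T)$ already observed, with parts $L(\mathbf T)$ and its complement. Note that a doubled edge collapses to a single edge, so a component is an edge (a path on two vertices), and a simple cycle has length at least four.

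For the bipartition sizes, I will use that $G(\mathbf T)$ is bipartite and that a connected bipartite graph has a unique bipartition up to swapping the parts. A path on $r$ vertices has its vertices alternate along the path, so its parts have sizes $\lceil r/2\rceil$ and $\lfloor r/2\rfloor$. This gives $a,a+1$ when $r=2a+1$ (including the isolated vertex with $a=0$, matching the convention $0,1$) and $b,b$ when $r=2b$. An even cycle of length $2b$ likewise alternates, giving parts $b,b$. Odd components can only be paths, because cycles are even, so the two cases cover everything.

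There is no real obstacle here. The only point needing care is that $G(\mathbf T)$ is defined as a simple graph, so coincident sibling pairs in the two trees must be handled. The degree bound still holds, and alternation along a cycle is justified either by the matching argument or simply by bipartiteness.
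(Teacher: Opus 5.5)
Your proof is correct and follows the same route as the paper. The union of two matchings has maximum degree at most two, bipartiteness (or your alternation between $M_1\setminus M_2$ and $M_2\setminus M_1$) makes every cycle even, and alternating the parts along each path or cycle gives the stated sizes; your handling of sibling pairs shared by both trees is a small detail the paper leaves implicit.
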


\begin{proof}
The graph is bipartite and is the union of two matchings, so its
maximum degree is at most two.  Its components are therefore paths
or even cycles.  Alternating the two parts along a path or cycle
gives the claimed sizes.
\end{proof}

For each $i\in\{2,\ldots,n\}$, define a map $\tau_i$ on the whole
set $\mathcal T_{n,2}$ as follows: if $i$ is the smallest label of an
odd component of $G(\mathbf T)$, flip that component; otherwise fix
$\mathbf T$.  In particular, even components are never flipped.
We call this the \emph{modified synchronised Foata--Strehl action}.

\begin{proof}[Proof of Theorem~\ref{main:gamma}\textup{(i)}: the action and orbit enumerator]
Component flips preserve the sibling graph, including its component
minima.  Thus applying $\tau_i$ twice restores the original tuple.
Two distinct nontrivial generators change the statuses of disjoint
sets of labels and hence commute.  Their actions remain independent
even when the corresponding child subtrees are nested, because the
underlying parent relations are fixed.

Let $s$ be the number of odd components and put
$k=\sum_{C\in\Comp(G(\mathbf T))}\lfloor |C|/2\rfloor$.
An odd component of size $2a+1$ contributes $t^a+t^{a+1}$ to the
orbit enumerator.  An even component of size $2b$ retains its
orientation and contributes $t^b$.  By
\Cref{fs:component-balance}, multiplication over components gives
$t^k(1+t)^s$.  Finally, summing the component sizes gives
$n-1=2k+s$.
\end{proof}

Call $\mathbf T\in\mathcal T_{n,2}$ \emph{reduced} if every odd
component $C$ satisfies
\[
 |C\cap L(\mathbf T)|=\frac{|C|-1}{2}.
\]
There is no additional restriction on even components.

\begin{proof}[Proof of Theorem~\ref{main:gamma}\textup{(i)}: the reduced representatives]
Each odd component has a unique orientation with its smaller part
on the left.  The modified action chooses these orientations
independently and fixes all even-component orientations.  Every
orbit therefore has exactly one reduced pair.  Its number of
common left-child labels is the exponent $k$ in
\eqref{fs:pair-orbit}.  Summing that identity over all orbits proves
\eqref{alg:gamma-expansion} for $\ell=2$, with the asserted counting
interpretation.  A tree without left children is the
unique right chain on $[n]$, so there is exactly one reduced pair
with no left children.
\end{proof}

The gamma-positivity of $A_{n,2}$ was proved algebraically by
Shankar~\cite[Proposition~27]{Shankar2026}.  Theorem~\ref{gamma:reduced-pairs}
gives a group action and a combinatorial interpretation of its gamma
coefficients.  When there is only one coordinate tree, the sibling
graph consists of isolated vertices and single edges.  Its odd
components are exactly the labels of unary children, so the same
construction reduces to the unary action above.

\begin{example}\label{ex:r2-orbit}
Take the pair $(T_1,T_2)$ in Figure~\ref{fig:r2-orbit}, with common
left-child set $\{2\}$.  Its sibling graph consists of the path
$3\! -\! 2\! -\! 4$ and the isolated vertex $5$.  Thus the two active
generators are $\tau_2$, which flips $C=\{2,3,4\}$, and $\tau_5$,
which flips $D=\{5\}$.  The original pair is reduced: on each odd
component, the left vertices occupy the smaller colour class.

Table~\ref{tab:r2-orbit} gives the four orbit elements through their
inverse images under $\Phi_5$ in each coordinate.  In every row the
common block-minimum set is $\{1\}\cup L$.  The orbit enumerator is
\[
 t+2t^2+t^3=t(1+t)^2,
\]
so this orbit contributes one to $\gamma_{5,2,1}$.
\end{example}
\begin{figure}[htbp]
\centering
\begin{tikzpicture}[x=1cm,y=1cm]
\begin{scope}[xshift=-4cm]
\node[treevertex] (a) at (0,0) {1};
\node[treevertex] (b) at (-.65,-.75) {2};
\node[treevertex] (c) at (.65,-.75) {3};
\node[treevertex] (d) at (0,-1.5) {4};
\node[treevertex] (e) at (.65,-2.25) {5};
\draw[treeedge] (a)--(b) (a)--(c) (b)--(d) (d)--(e);
\node at (0,.5) {$T_1$};
\end{scope}
\begin{scope}[xshift=-.6cm]
\node[treevertex] (a) at (0,0) {1};
\node[treevertex] (b) at (-.65,-.75) {2};
\node[treevertex] (c) at (.65,-.75) {4};
\node[treevertex] (d) at (0,-1.5) {3};
\node[treevertex] (e) at (.65,-2.25) {5};
\draw[treeedge] (a)--(b) (a)--(c) (b)--(d) (d)--(e);
\node at (0,.5) {$T_2$};
\end{scope}
\begin{scope}[xshift=3.2cm]
\node[treevertex] (a) at (-.8,-.75) {3};
\node[treevertex] (b) at (0,-.75) {2};
\node[treevertex] (c) at (.8,-.75) {4};
\node[treevertex] (d) at (0,-1.8) {5};
\draw[treeedge] (a)--(b)--(c);
\node at (0,.5) {$G(T_1,T_2)$};
\end{scope}
\end{tikzpicture}
\caption{A reduced synchronised pair for $\ell=2$ and its sibling graph.
In each tree, a child drawn to the left or right occupies that child slot.}
\label{fig:r2-orbit}
\end{figure}
\begin{table}[htbp]
\centering
\begin{tabular}{@{}llll@{}}
\toprule
Applied flips & Common left-child set $L$ & Function pair & $|L|$\\
\midrule
None & $\{2\}$ & $(12122,12212)$ & $1$\\
$\tau_5$ & $\{2,5\}$ & $(12125,12214)$ & $2$\\
$\tau_2$ & $\{3,4\}$ & $(11233,11323)$ & $2$\\
$\tau_2\tau_5$ & $\{3,4,5\}$ & $(11235,11324)$ & $3$\\
\bottomrule
\end{tabular}
\caption{The four elements of the orbit in Example~\ref{ex:r2-orbit}.}
\label{tab:r2-orbit}
\end{table}

\subsection{A recurrence for the gamma coefficients}

Expanding the differential recurrence of
Shankar~\cite[Proposition~26]{Shankar2026} in the gamma basis gives the
two-term recurrence in Theorem~\ref{main:gamma}\textup{(ii)} for the reduced-pair numbers.  We give
the calculation in its univariate form.

\begin{proof}[Proof of Theorem~\ref{main:gamma}\textup{(ii)}]
Let $\theta=t\,d/dt$.  The defining recurrence for the coefficients
of $A_{n,2}$ gives
\[
 A_{n,2}=
 \bigl((\theta+1)^2+t(n-1-\theta)^2\bigr)A_{n-1,2}.
\]
For $j,q\ge0$ with $2j+q=n-2$, differentiation gives the basis identity
\begin{align}
 &\bigl((\theta+1)^2+t(n-1-\theta)^2\bigr)
       t^j(1+t)^q \notag\\
 &\qquad=(j+1)^2t^j(1+t)^{q+1}
       +q(q+4j+5)t^{j+1}(1+t)^{q-1},
 \label{gamma:basis-identity}
\end{align}
where the second summand is omitted when $q=0$.  For completeness,
after factoring out $t^j$, the operator on the left is
\[
 (\theta+j+1)^2+t(q+j+1-\theta)^2.
\]
Substituting
\[
 \theta(1+t)^q=qt(1+t)^{q-1},\qquad
 \theta^2(1+t)^q=qt(1+t)^{q-1}
                  +q(q-1)t^2(1+t)^{q-2}
\]
and collecting terms proves \eqref{gamma:basis-identity}; the cases
$q=0,1$ follow either directly or by omitting the vanishing derivative
terms.  We now apply the identity to the gamma expansion of $A_{n-1,2}$.
The coefficient of $t^k(1+t)^{n-1-2k}$ receives the first contribution
from $j=k$ and the second from $j=k-1$.  In the latter case,
$q=n-2k$ and $q+4j+5=n+2k+1$, giving
\eqref{gamma:recurrence-equation}.
\end{proof}

\section{PECKness and combinatorial \texorpdfstring{$\mathfrak{sl}_2$}{sl2} actions}\label{sec:PECK}
We prove Theorem~\ref{main:peck} and discuss Conjecture~\ref{PECK:conjecture}.

We return to the posets defined in Section~\ref{sec:posets}.
The kernel-preserving tree bijection makes it possible to construct linear
operators supported on their cover relations when $\ell=1$.
We first recall the representation-theoretic mechanism that turns these
operators into a proof of the strong Sperner property.

\subsection{The combinatorial \texorpdfstring{$\mathfrak{sl}_2$}{sl2} mechanism}

Let $P_k$ denote the set of elements of rank $k$ in a finite graded poset $P$. A finite graded poset $P$ of rank $N$ is \emph{rank-symmetric} if
$|P_k|=|P_{N-k}|$ for every $k$, and \emph{rank-unimodal} if its rank sizes
are unimodal.  It is \emph{strongly Sperner} if, for every
$1\leq s\leq N+1$, the largest union of $s$ antichains has size equal to
the sum of its $s$ largest rank sizes.  A poset having all three
properties is called a \emph{PECK poset}.

Let $V_k=\mathbb C P_k$ be the vector space with basis the elements of
$P_k$, and put $V=\bigoplus_{k=0}^N V_k$.  A linear map $E:V\to V$ is
\emph{order-raising} if
\[
 E x\in\operatorname{span}\{y\in P:y\gtrdot x\}
 \qquad(x\in P).
\]
The linear-algebraic approach to Sperner theory replaces matchings between
ranks by order-raising maps of full rank.  Stanley's textbook
\cite[Chapter~4]{StanleyAlgComb2013} explains this approach through the up
and down operators on Boolean lattices.  For strong Sperner properties,
we use the following criterion.

\begin{theorem}[{\cite[Lemma~1.1]{Stanley1980}}]
\label{PECK:raising-criterion}
A finite graded poset $P$ of rank $N$ is PECK if and only if there is an
order-raising map $E$ such that
\begin{equation}\label{PECK:lefschetz}
 E^{N-2k}:V_k\longrightarrow V_{N-k}
 \quad\text{is an isomorphism for }0\leq k\leq\lfloor N/2\rfloor.
\end{equation}
\end{theorem}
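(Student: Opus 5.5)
The plan is to prove the two implications separately. Throughout, $E_k:V_k\to V_{k+1}$ denotes the restriction of $E$, and we use that every matrix entry of a composite $E^{j}:V_i\to V_{i+j}$ vanishes unless the corresponding pair of elements is comparable.

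\emph{Sufficiency.} Assume \eqref{PECK:lefschetz}.
\begin{enumerate}[label=(\alph*)]
\item Since $E^{N-2k}$ factors through $E_k$, the map $E_k$ is injective for $k<N/2$. Dually, $E_{N-k-1}$ is surjective.
\item Comparing dimensions gives $|P_k|=|P_{N-k}|$ and $|P_k|\le|P_{k+1}|$ below the middle, so $P$ is rank-symmetric and rank-unimodal.
\item For strong Sperner, fix $s$. Let $W$ be the window of the $s$ central ranks, so that $|W|$ is the sum of the $s$ largest rank sizes.
\item It suffices to partition $P$ into $|W|$ chains. Then a union of $s$ antichains meets each chain at most $s$ times, but we need the sharper bound $\sum_C\min(|C|,s)=|W|$. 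So each chain must meet every window of $s$ consecutive ranks it spans in a way that makes the chain count equal $|W|$.
\item I would build the chains from order-matchings. For $k<N/2$, a nonzero maximal minor of $E_k$ has a nonzero term in its Leibniz expansion, which gives an injection $P_k\to P_{k+1}$ along covers. Dually, above the middle we get injections downward.
\item The problem is that arbitrary matchings need not concatenate into chains that are symmetric about the middle, and symmetry is exactly what yields $\sum\min(|C|,s)=|W|$.
\item The fix I would try is to use the full Lefschetz maps rather than single steps. Choose a complement $U_k$ of $E V_{k-1}$ in $V_k$. Then $V=\bigoplus_k\bigoplus_{i=0}^{N-2k}E^iU_k$ is a string decomposition of $V$ in which every string is centred at the middle.
\item Convert this into a combinatorial chain partition whose number of elements in each rank agrees with the string count. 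Do this inductively from the middle outward: at each step, extend a matching chosen compatibly with a nonzero minor of the relevant block of $E^{N-2k}$, restricted to the span of the chain ends already constructed.
\end{enumerate}

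\emph{Necessity.} Assume $P$ is PECK.
\begin{enumerate}[label=(\alph*)]
\item Put algebraically independent indeterminate weights $x_{y,x}$ on the cover relations $y\gtrdot x$ and let $E$ have these matrix entries.
\item By strong Sperner and rank symmetry, for each $k$ the minimum size of a set meeting every saturated chain from $P_k$ to $P_{N-k}$ is $|P_k|$. Menger's theorem in the Hasse diagram then gives $|P_k|$ vertex-disjoint saturated chains from $P_k$ to $P_{N-k}$.
\item Expand $\det E^{N-2k}$ as a sum over path families, as in Lindström--Gessel--Viennot.
\item Take the monomial of one fixed disjoint family. It can arise only from path families that use exactly the same multiset of cover edges.
\item Argue that such families are rearrangements of the same disjoint chains, which yield the same bijection $P_k\to P_{N-k}$ and hence the same sign. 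So the coefficient of that monomial is nonzero, $\det E^{N-2k}\ne0$ as a polynomial, and a generic real (or complex) specialisation gives an isomorphism for all $k$ simultaneously.
\end{enumerate}

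I expect two steps to be the main obstacles. The first is the sign and cancellation argument in necessity; to control it I would order the covers and pass to a lexicographically extreme monomial. The second, harder one is the conversion in sufficiency from the linear string decomposition to a genuine partition of $P$ into chains realising $\sum\min(|C|,s)$ for every $s$ at once. Should that fail, the fallback is to prove the $s$-Sperner bound directly for each $s$ by a dimension count. Given a union $A$ of $s$ antichains, compose $E$-powers between the $s$ central ranks and the part of $A$ in each rank, and use full rank of the restricted maps to inject $\mathbb C A$ into $\bigoplus_{j\in W}V_j$.
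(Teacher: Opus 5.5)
Your necessity half is sound, and steps (a)--(b) of sufficiency correctly give rank symmetry and unimodality. The lexicographic device you anticipate is not needed. A path family producing the monomial of a fixed vertex-disjoint family must use each of its edges exactly once. Every vertex has out-degree at most one in that edge set, so the family is the fixed one.

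The genuine gap is the strong Sperner property. Suppose a single chain partition $\mathcal C$ gives $\sum_C\min(|C|,s)$ equal to the sum of the $s$ largest ranks for every $s$ at once. Then $|C\cap W_s|=\min(|C|,s)$ for every chain $C$ and every central window $W_s$ of $s$ ranks. Taking $s=|C|$ forces each chain to be saturated and to fill a central interval of ranks. So steps (g)--(h) amount to extracting a symmetric chain decomposition from a Lefschetz operator, and nothing in your sketch does this. The strings $E^iU_k$ live in a basis unrelated to $P$. Nonzero minors of $E^{N-2k}$ only give bijections $P_k\to P_{N-k}$ along comparabilities, with no compatibility between different $k$. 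Stanley's posets $L(m,n)$ carry such operators, yet the existence of symmetric chain decompositions for them is open in general, so this route cannot be completed as planned. Your fallback is not yet an argument either: the maps $E^{j-r}$ from different ranks of $A$ land in the same $V_j$, and nothing you state controls their joint injectivity.

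The missing idea is to let the chain partition depend on $s$, and to get vertex-disjoint chains by running your necessity argument backwards. Apply Cauchy--Binet to the factorisation $E^{N-2k}=E_{N-k-1}\cdots E_k$. A nonzero term gives subsets $S_r\subseteq P_r$ of size $p_k=|P_k|$, with $S_k=P_k$ and $S_{N-k}=P_{N-k}$, such that every minor $E[S_{r+1},S_r]$ is nonzero. Their nonzero Leibniz terms are perfect matchings along covers, which concatenate to $p_k$ vertex-disjoint saturated chains from $P_k$ to $P_{N-k}$.

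Now fix $s$. Let $W$ be the ranks $k,\ldots,N-k$ if $s=N-2k+1$, or $k,\ldots,N-k-1$ if $s=N-2k$. Extend the chains below rank $k$ and above rank $N-k$ by concatenating the single-step matchings of your step (e). These cover all elements of rank below $k$ or above $N-k$, and attach at distinct elements of $P_k$ and $P_{N-k}$, all of which lie on the through-chains. Take the remaining elements of $W$ as singletons. Every chain $C$ then satisfies $|C\cap W|=\min(|C|,s)$. Hence a union of $s$ antichains has at most $\sum_C\min(|C|,s)=\sum_{r\in W}p_r$ elements, which is the sum of the $s$ largest ranks by symmetry and unimodality.
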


The Lie algebra $\mathfrak{sl}_2(\mathbb C)$ has generators $e,f,h$ with
$[h,e]=2e$, $[h,f]=-2f$, and $[e,f]=h$, where
$[X,Y]=XY-YX$.  Its finite-dimensional representations are direct sums
of irreducible modules; see \cite[Sections~6--7]{Humphreys1972}.
For each integer $d\geq0$, the irreducible module of highest weight $d$
has a basis $v_0,\ldots,v_d$ in which
\begin{equation}\label{PECK:irreducible-model}
 h v_i=(2i-d)v_i,\qquad
 e v_i=(d-i)v_{i+1},\qquad
 f v_i=i v_{i-1},
\end{equation}
with the terms outside the index range interpreted as zero.
Thus the weights form a string symmetric about zero, and powers of $e$
identify opposite weight spaces.  Assigning weight $2k-N$ to rank $k$
turns these identifications into the isomorphisms in
\eqref{PECK:lefschetz}.  See, for example, Proctor \cite{Proctor1982} and
Gaetz--Gao \cite{GaetzGao2020}. The following lemma gives another sufficient condition for PECKness. We include the proof here to keep the article self-contained.

\begin{lemma}\label{PECK:sl2-criterion}
Suppose that $E$ is order-raising, that $F(V_k)\subseteq V_{k-1}$, and
that $H|_{V_k}=(2k-N)I$.  If
\begin{equation}\label{PECK:commutators}
 [H,E]=2E,\qquad [H,F]=-2F,\qquad [E,F]=H,
\end{equation}
then $P$ is PECK.
\end{lemma}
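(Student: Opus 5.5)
We reduce to Stanley's criterion, Theorem~\ref{PECK:raising-criterion}, using the given $E$ as the order-raising map. The task is to show that $E^{N-2k}:V_k\to V_{N-k}$ is an isomorphism for $0\le k\le\lfloor N/2\rfloor$. The commutator relations \eqref{PECK:commutators} make $V$ a finite-dimensional $\mathfrak{sl}_2(\mathbb C)$-module via $e\mapsto E$, $f\mapsto F$, $h\mapsto H$. The grading is compatible: $H$ acts on $V_k$ by the scalar $2k-N$, $E$ raises rank by one because it is order-raising, and $F$ lowers it by one. Hence $V_k$ is exactly the weight space of weight $2k-N$. This identification is the only place where the hypotheses on $E$, $F$ and $H$ are used; everything afterwards is standard representation theory.

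Next we decompose $V=\bigoplus_r W_r$ into irreducibles by complete reducibility. Each $W_r$ is $H$-stable, so it is spanned by $H$-eigenvectors, and it splits as $W_r=\bigoplus_k (W_r\cap V_k)$. Consequently the weight space $V_k$ is the direct sum of the weight spaces $W_r\cap V_k$.

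It then suffices to check the claim on a single irreducible module $W$ of highest weight $d$, using the model \eqref{PECK:irreducible-model}. The weights of $W$ are $-d,-d+2,\dots,d$, each with a one-dimensional space. Weight $2k-N$ occurs in $W$ exactly when $|2k-N|\le d$ with matching parity, and this condition is symmetric under $k\mapsto N-k$. Fix $k\le N/2$ and suppose $W$ contains weight $2k-N$. In the basis of \eqref{PECK:irreducible-model} this weight vector is $v_i$ with $2i-d=2k-N$. Applying $e$ a total of $N-2k$ times reaches $v_{i+N-2k}$, whose weight is $N-2k$, so the index stays at most $d$. Every intermediate coefficient $d-i'$ with $i'<d$ is nonzero. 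Thus $e^{N-2k}$ maps the one-dimensional weight-$(2k-N)$ space of $W$ isomorphically onto its weight-$(N-2k)$ space. If $W$ contains neither weight, both spaces are zero and there is nothing to check.

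Summing over the irreducible summands shows that $E^{N-2k}:V_k\to V_{N-k}$ is an isomorphism, and Theorem~\ref{PECK:raising-criterion} then gives that $P$ is PECK. No step is genuinely hard. The point needing the most care is the compatibility between the irreducible decomposition and the rank grading, namely that each weight space splits along the summands, together with the check that the coefficients $d-i'$ never vanish along the chain. Rank symmetry also follows directly, since $\dim V_k=\dim V_{N-k}$.
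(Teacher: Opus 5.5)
Your proposal is correct and follows essentially the same route as the paper: you make $V$ an $\mathfrak{sl}_2$-module, check on each irreducible summand via the model \eqref{PECK:irreducible-model} that $E^{N-2k}$ carries the weight-$(2k-N)$ space isomorphically onto the weight-$(N-2k)$ space, sum over summands, and invoke Theorem~\ref{PECK:raising-criterion}. Your explicit checks, that weight spaces split along the irreducible decomposition and that the coefficients $d-i'$ never vanish along the chain, spell out details the paper's shorter proof leaves implicit.
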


\begin{proof}
The relations give an $\mathfrak{sl}_2$-representation on $V$.
In each irreducible summand, \eqref{PECK:irreducible-model} shows that
$E^m$ identifies the weight $-m$ and weight $m$ spaces for every integer
$m\geq0$; both spaces are zero when these weights do not occur.
Taking direct sums and setting $m=N-2k$ gives
\eqref{PECK:lefschetz}, so Theorem~\ref{PECK:raising-criterion} applies.
\end{proof}

\subsection{Linear operators for \texorpdfstring{$\ell=1$}{ell=1}}

We use the bijection $\Phi_n$ of Theorem~\ref{tree:kernel-bijection} to
identify the basis of $\mathbb C\mathcal Q_{n,1}$ with the increasing
binary trees in $\mathcal T_n$.  The rank of a tree is its number of
left edges, and its maximal right chains give the blocks of the
corresponding kernel.

For a tree $T$, let $R_u(T)$, respectively $L_u(T)$, be the set of labels
of right, respectively left, children whose parents have exactly one
child.  If $i\in R_u(T)\cup L_u(T)$, write $\sigma_iT$ for the tree
obtained by moving the subtree rooted at $i$ to the opposite child slot
of its parent.  Define
\begin{equation}\label{PECK:tree-operators}
 ET=\sum_{i\in R_u(T)}\sigma_iT,
 \qquad
 FT=\sum_{i\in L_u(T)}\sigma_iT,
 \qquad
 HT=(2\rho(T)-n+1)T.
\end{equation}

\begin{proof}[Proof of Theorem~\ref{main:peck}: the operators and PECKness]
Moving a unary right edge to the left splits one maximal right chain
into two and leaves the other right chains unchanged.  By the
kernel-preserving property of $\Phi_n$, this splits one kernel block.
Theorem~\ref{main:models}\textup{(ii)} shows that every summand of $ET$ is an
upper cover of $T$.  Reversing the move merges two right chains, so every
summand of $FT$ is a lower cover.

To verify the commutators, we fix the parent relations of a tree and the
left--right order at every vertex having two children, and forget only
the orientations at vertices having one child.  We call the resulting
object a skeleton.  Suppose that it has $b$ vertices with two children
and $u$ unary edges.  Counting edges gives
\[
 n-1=2b+u.
\]
We index the unary edges by $[u]$.  The orientations of this skeleton have
basis $e_S$, $S\subseteq[u]$, where $S$ records the edges oriented to the
left.  These basis vectors have rank $b+|S|$, and the operators restrict
to
\begin{align}
 Ee_S&=\sum_{i\notin S}e_{S\cup\{i\}},&
 Fe_S&=\sum_{i\in S}e_{S\setminus\{i\}},&
 He_S&=(2|S|-u)e_S.\label{PECK:boolean-operators}
\end{align}
The first two relations in \eqref{PECK:commutators} follow because $E$
raises $|S|$ by one and $F$ lowers it by one.  In $(EF-FE)e_S$, the terms
changing two distinct edges cancel in pairs.  The coefficient of
$e_S$ is $|S|-(u-|S|)=2|S|-u$, proving $[E,F]=H$.

Each skeleton spans an invariant subspace, and the skeleton classes
partition the tree basis.  The relations therefore hold on the whole
space.  We can now apply Lemma~\ref{PECK:sl2-criterion} to obtain PECKness.
\end{proof}

The same construction also gives a partition into symmetric Boolean
subposets.  A \emph{symmetric Boolean decomposition} of a graded poset
of rank $N$ is a partition into Boolean subposets whose ranks agree with
the ambient ranks up to a shift and whose minimum and maximum ranks
sum to $N$.  A \emph{symmetric chain decomposition} is a partition into
saturated chains with the same condition on their endpoint ranks.

\begin{proof}[Proof of Theorem~\ref{main:peck}: the symmetric decompositions]
We fix a skeleton with $b$ binary vertices and $u$ unary edges.
The orientations of a fixed skeleton form a Boolean subposet, with
$S\subseteq S'$ corresponding to changing unary right edges to left
edges.  Conversely, comparability of two orientations implies inclusion
of their left-child label sets, hence inclusion of the corresponding
subsets of unary edges.  Thus the induced order is Boolean.
Its ranks range from $b$ to $b+u$, and $2b+u=n-1$ makes it symmetric.
These subposets partition $\mathcal Q_{n,1}$.

For completeness, a symmetric chain decomposition of a Boolean lattice
can be constructed inductively.  We start with the one-element lattice
$\mathcal B_0$.  Given a symmetric chain $x_0<\cdots<x_m$ in
$\mathcal B_u$, we partition its product with $\{0,1\}$ into
\[
 (x_0,0)<\cdots<(x_m,0)<(x_m,1),
 \qquad
 (x_0,1)<\cdots<(x_{m-1},1),
\]
omitting the second chain if $m=0$.  Both chains are saturated and
symmetric in $\mathcal B_{u+1}$.  Applying this to every chain and
then to each skeleton class proves the second assertion.
\end{proof}

\begin{samepage}
\begin{example}\label{ex:scd}
The poset $\mathcal Q_{4,1}$ has rank sizes $(1,11,11,1)$.  A symmetric
chain decomposition consists of the chain
\[
 1111\lessdot1114\lessdot1134\lessdot1234
\]
and the following ten chains:
\[
\begin{array}{@{}c@{\qquad}c@{}}
1212\lessdot1213 & 1211\lessdot1214\\[2pt]
1221\lessdot1231 & 1131\lessdot1232\\[2pt]
1222\lessdot1224 & 1133\lessdot1233\\[2pt]
1113\lessdot1223 & 1121\lessdot1123\\[2pt]
1122\lessdot1124 & 1112\lessdot1132.
\end{array}
\]
The first chain has endpoint ranks $0$ and $3$; every other chain has
endpoint ranks $1$ and $2$.  All $24$ subexceedant functions occur
exactly once.  Each step splits one kernel block.  For example,
$1113\lessdot1223$ splits $\{1,2,3\}\mid\{4\}$ into
$\{1\}\mid\{2,3\}\mid\{4\}$.
\end{example}
\end{samepage}

\subsection{The PECK conjecture for higher parameters}

The rank polynomial $A_{n,\ell}(t)$ is palindromic by
Proposition~\ref{prelim:palindromicity} and real-rooted by
Theorem~\ref{main:eulerian}\textup{(i)}.  Consequently,
$\mathcal Q_{n,\ell}$ is rank-symmetric and rank-unimodal for every $n,\ell$.
The remaining PECK condition concerns its order.

Conjecture~\ref{PECK:conjecture} asserts that $\mathcal Q_{n,\ell}$ is PECK for every $\ell>1$.

Equivalently, it remains to prove that these posets are strongly
Sperner.  The component flips used in the $\ell=2$ group action do not
directly give the required order-raising operators.  Flipping an odd
path with at least three vertices replaces one nonempty set of
left-child labels by another disjoint nonempty set.  The two
block-minimum sets are then incomparable by inclusion, so the two
tuples are incomparable in $\mathcal Q_{n,2}$.

There is computational evidence for the stronger assertion that
$\mathcal Q_{n,\ell}$ admits a symmetric chain decomposition.  If $P_k$ and
$P_{k+1}$ are consecutive ranks and $N(X)$ is the set of upper neighbours
of $X\subseteq P_k$, the \emph{normalised matching property} is
\begin{equation}\label{PECK:normalized-matching}
 \frac{|N(X)|}{|P_{k+1}|}\geq\frac{|X|}{|P_k|}
 \qquad(X\subseteq P_k).
\end{equation}
A rank-symmetric, rank-unimodal poset satisfying this property at every
pair of consecutive ranks admits a symmetric chain decomposition
\cite{Griggs1977}.  We have verified
\eqref{PECK:normalized-matching} throughout the following ranges:
\[
 \begin{array}{c|ccc}
 \ell&2&3&4\\ \hline
 n&1\leq n\leq8&1\leq n\leq7&1\leq n\leq6.
 \end{array}
\]
For $\ell=2$, explicit symmetric chain decompositions have also been
constructed and checked through $n=7$.

\section{Extensions to Stirling permutations}\label{sec:stirling}
We prove Theorems~\ref{main:stirling} and~\ref{stirling:plateau-peck}.

The same normalisation applies to descent numbers of Stirling
permutations.  We distinguish the multiplicity $m$ of each letter from
the synchronisation parameter $\ell$.  Ordinary Stirling permutations
correspond to $m=2$, whereas $m=1$ gives ordinary permutations.

\subsection{Descent histories and stability}

An \emph{$m$-Stirling permutation} of order $n$ is a permutation of
$\{1^m,\ldots,n^m\}$ in which no smaller letter occurs between two
consecutive occurrences of a letter.  We count only internal descents:
\[
 d(\sigma)=|\{i:1\leq i<mn,\ \sigma_i>\sigma_{i+1}\}|.
\]
Thus the customary mandatory descent to a terminal zero is omitted.
If $\sigma^{[i]}$ is obtained by deleting all letters greater than $i$,
put $e_i=d(\sigma^{[i]})-d(\sigma^{[i-1]})$ for $2\leq i\leq n$.
The largest letter is inserted as one consecutive block, so $e_i$ is
either zero or one.  We call $(e_2,\ldots,e_n)$ its \emph{descent history}.

For a history $e$, set $k_i=e_2+\cdots+e_{i-1}$ and
\begin{equation}\label{stirling:history-weight}
 c_m(e)=\prod_{i=2}^n
 \begin{cases}
 k_i+1,&e_i=0,\\
 m(i-1)-k_i,&e_i=1.
 \end{cases}
\end{equation}
Indeed, a word on $[i-1]$ with $k_i$ descents has $k_i+1$
descent-preserving insertion gaps: its descent gaps and its final gap.
The remaining $m(i-1)-k_i$ gaps create a descent.  Consequently,
$c_m(e)$ counts the words with history $e$.

An ordered $\ell$-tuple is synchronised when its coordinates have the
same descent history.  Define
\begin{equation}\label{stirling:polynomials}
 F^{(m)}_{n,\ell}(\mathbf z)
 =\sum_{e\in\{0,1\}^{n-1}}c_m(e)^\ell\prod_{i:e_i=1}z_i,
 \qquad
 S^{(m)}_{n,\ell}(t)=F^{(m)}_{n,\ell}(t,\ldots,t).
\end{equation}
Writing $S^{(m)}_\ell(n,k)=[t^k]S^{(m)}_{n,\ell}(t)$ gives
\begin{equation}\label{stirling:recurrence}
 S^{(m)}_\ell(n,k)
 =(k+1)^\ell S^{(m)}_\ell(n-1,k)
 +(m(n-1)-k+1)^\ell S^{(m)}_\ell(n-1,k-1),
\end{equation}
with $S^{(m)}_\ell(1,0)=1$ and zero entries outside $0\leq k\leq n-1$.
For $m=1$, these are the parametrised Eulerian polynomials.

\begin{proof}[Proof of Theorem~\ref{main:stirling}\textup{(i)}]
Stability follows from Theorem~\ref{stab:general} and the
$m$-Stirling row of Table~\ref{stab:applications}; its balancing weights
are $w_n(k)=\binom{m(n-1)}k$.

For strict interlacing, the initial nonconstant polynomial is
$S^{(m)}_{2,\ell}=1+m^\ell t$.  Suppose that
$f=S^{(m)}_{n,\ell}$ has simple negative zeros
$\alpha_1<\cdots<\alpha_d$, where $d=n-1$, and put
\[
 U=(\theta+1)^\ell f,\qquad
 V=(mn-\theta)^\ell f,\qquad \theta=t\frac{d}{dt}.
\]
Then $S^{(m)}_{n+1,\ell}=U+tV$.  Since $mn>d$,
Lemma~\ref{int:root-movement}, applied successively, gives simple
negative zeros $u_i$ and $v_i$ satisfying $v_i<\alpha_i<u_i$.
Specializing the old variables of $F^{(m)}_{n+1,\ell}$ to $t$ gives
the stable polynomial $U(t)+zV(t)$.  Hence its coefficient polynomials
weakly interlace, in the order
\[
 v_1\leq u_1\leq v_2\leq u_2\leq\cdots\leq v_d\leq u_d.
\]
Both leading coefficients are positive.  It follows that
$\operatorname{sgn}U(\alpha_i)=(-1)^{d-i+1}$ and
$\operatorname{sgn}V(\alpha_i)=(-1)^{d-i}$.
Since $\alpha_i<0$, the two summands of
$U(\alpha_i)+\alpha_iV(\alpha_i)$ have the same nonzero sign.
Together with the positive constant and leading coefficients, this
places one zero in each of
\[
 (-\infty,\alpha_1),\quad
 (\alpha_i,\alpha_{i+1})\ (1\leq i<d),\quad (\alpha_d,0).
\]
Degree counting proves simplicity and strict interlacing.
The Strong Rayleigh, log-concavity, and unimodality assertions follow
from stability and its univariate specialisation, as in
Section~\ref{alg:section}.

The proof of Theorem~\ref{main:eulerian}\textup{(iii)} also shows that
$S^{(m)}_{n+1,\ell}(t)+zS^{(m)}_{n,\ell}(t)$ is real stable.
The only change is that the linear part of the quotient of consecutive
rows has slope equal to the positive ratio of their leading
coefficients.  Every real combination
$S^{(m)}_{n+1,\ell}+sS^{(m)}_{n,\ell}$ therefore has simple real zeros;
all are negative for $s>-1$, one is zero for $s=-1$, and exactly one
is positive for $s<-1$.
\end{proof}

At $\ell=1$, real-rootedness belongs to the classical theory; see
Brenti~\cite{Brenti1989} and B\'ona~\cite{Bona2008}.
Haglund--Visontai~\cite{haglund-visontai-stable-multivariate} proved stability for
multivariate refinements marking ascent, descent, and plateau values.
The polynomial \eqref{stirling:polynomials} records insertion histories,
which differ from the final descent-top sets.

\begin{remark}\label{stirling:variable-multiplicities}
The proof also permits positive multiplicities $m_1,\ldots,m_n$.
We put $C_1=0$ and $C_i=m_1+\cdots+m_{i-1}$ for $i\ge2$ and replace $m(i-1)$ by $C_i$ in
\eqref{stirling:history-weight}.  The balancing weights are
$\binom{C_n}k$, and the normalised recurrence becomes
\[
 P_{n,\ell}^{C}
 =(1+z_n)
 \left[
 (\Theta+1)\prod_{j=C_{n-1}+1}^{C_n}\frac{j-\Theta}{j}
 \right]^\ell P_{n-1,\ell}^{C}.
\]
Every displayed factor preserves stability by
Lemma~\ref{stab:euler-operators}, and the binomial multiplier restores
the unnormalized polynomial.  Since $C_{n+1}\geq n$, the interlacing
proof is unchanged.  Thus all the conclusions of
Theorem~\ref{main:stirling}\textup{(i)} hold for a fixed infinite sequence of
positive multiplicities.
\end{remark}

\subsection{Trees and the kernel order}

For constant multiplicity $m$, replace subexceedant functions by maps
$f:[n]\to[m(n-1)+1]$ satisfying
$f(i)\leq m(i-1)+1$.  Their kernels and block-leader sets are defined
as before.  Let $\mathcal R^{(m)}_{n,\ell}$ be their synchronised
$\ell$-tuples, ordered by proper kernel refinement in every coordinate,
as in Definition~\ref{poset:definition}.

\begin{proof}[Proof of Theorem~\ref{main:stirling}\textup{(ii)}]
We number the child slots $0,\ldots,m$, with $m$ the rightmost slot.
We fix a partition with block minima $1=b_1<\cdots<b_s$.
After choosing the distinct images of its first $j-1$ blocks, the
number of available images for the next block is
\[
 m(b_j-1)+1-(j-1)=m(b_j-1)-j+2.
\]
We form the prescribed increasing rightmost chains and attach their roots
$b_j$, for $j\geq2$, in increasing order to unused nonrightmost slots
at vertices smaller than $b_j$.  There are $m(b_j-1)$ such slots before
attachments and $j-2$ have already been used.  We match available images,
in increasing order, with available slots, ordered first by parent
label and then by slot number.  The counts agree.  All edges increase
labels, and the construction is invertible by recovering the
rightmost chains and reversing these choices.

The Gessel correspondence reads a vertex $a$ with subtrees
$T_0,\ldots,T_m$ as
\[
 W(T)=W(T_0)\,a\,W(T_1)\,a\,\cdots\,a\,W(T_m),
 \qquad W(\varnothing)=\varnothing;
\]
see \cite[Theorems~1--2]{JansonKubaPanholzer2011}.
The labels in a nonempty $T_j$, $j<m$, are larger than $a$, so the
last letter of $W(T_j)$ followed by $a$ contributes one descent.
The intervening transitions contribute no additional descents.
Induction therefore identifies the number of internal descents with
the number of nonrightmost edges.  Deleting the largest vertex simply
removes its incident edge, so the labels of these edges record the
descent history.  They are precisely the nonroot right-chain minima.

The grading argument of Theorem~\ref{main:models}\textup{(ii)} applies:
intermediate synchronised partitions can be realised by giving each
block its minimum as image, which satisfies the enlarged bounds.
The rank is the common number of blocks minus one.  The history
enumeration proves the asserted rank polynomial.
\end{proof}

\subsection{Fixed plateaux, partial gamma-positivity, and PECKness}

We now take $m=2$ and $\ell=1$.  For an ordinary Stirling permutation,
let $\operatorname{plat}(\sigma)$ count the adjacent equal pairs.
In its ternary tree this equals the number of empty middle slots.
For $1\leq p\leq n$, let $\mathcal R_{n,p}$ be the induced subposet
of $\mathcal R^{(2)}_{n,1}$ whose corresponding words have $p$ plateaux.
Equivalently, their trees have exactly $q=n-p$ middle edges.

The outer-slot involutions below realise the same orbit decomposition
as the Gessel-tree Foata--Strehl action of
Chen--Fu--Yan~\cite[Section~3 and Theorem~3.1]{ChenFuYan2023}.
Its partial gamma-positive enumerator is known; see also
Ma--Ma--Yeh~\cite[Section~3]{MaMaYeh2019}.
Its compatibility with the kernel order yields
Theorem~\ref{stirling:plateau-peck}, which we now prove.

\begin{proof}[Proof of Theorem~\ref{stirling:plateau-peck}]
A partition with $s$ blocks is the right-chain partition of a tree
with $q$ middle edges if and only if $s\geq q+1$.  Necessity holds
because every middle child starts a nonroot right chain.  Conversely,
we choose $q$ nonroot block minima to be middle children and designate
all the remaining nonroot block minima as left children.  We attach the
chain roots in increasing order.  At a root $v$, its designated slot
is available at $v-1$ earlier vertices, and at most $v-2$ previously
attached roots have used that slot type.  An available parent
therefore exists.  This realizes the partition with exactly $q$
middle edges.

It follows that the possible block counts are $q+1,\ldots,n$.
Any intermediate partition between two comparable elements is
realizable with $q$ middle edges.  Hence covers are precisely
comparable pairs whose block counts differ by one.  A partition with
more than $q+1$ blocks can be coarsened by one merge and realised
again; a partition with fewer than $n$ blocks can be refined by one
split.  Thus all minimal elements have $q+1$ blocks, all maximal
elements have $n$ blocks, and the stated grading follows.

We fix the parent relations of a ternary tree and every middle edge.
We also fix the left--right order at vertices having both outer slots
occupied.  At a vertex with exactly one occupied outer slot, we forget
whether its outer child is left or right; a middle child may still be
present.  The resulting object is called an \emph{outer-slot skeleton}.
If it has $b$ doubly occupied outer pairs and $u$ singly occupied
outer pairs, then
\[
 n-1=q+2b+u.
\]
Its orientations are indexed by $S\subseteq[u]$, recording the outer
edges directed left.  They have $q+b+|S|$ internal descents and hence
rank $b+|S|$ in $\mathcal R_{n,p}$.

Moving a singly occupied outer edge from right to left deletes one
right-chain edge and splits one kernel block.  It is therefore an
upper cover in the induced poset.  Conversely, comparability between
two orientations forces inclusion of their block-minimum sets, hence
inclusion of the corresponding subsets $S$.  Each skeleton class is
an induced Boolean subposet.  Its endpoint ranks sum to
$2b+u=p-1$, so these classes form a symmetric Boolean decomposition.
Applying the Boolean symmetric chain construction from
Theorem~\ref{main:peck} gives a symmetric chain
decomposition of $\mathcal R_{n,p}$.

For the operator statement, we use the Boolean
operators in \eqref{PECK:boolean-operators} on each skeleton.  Their diagonal operator
satisfies
\[
 H e_S=(2|S|-u)e_S
       =(2\rho_p(T)-(p-1))e_S.
\]
The raising and lowering operators are supported on the covers just
described.  Their direct sums satisfy
\eqref{PECK:commutators}, so we can apply Lemma~\ref{PECK:sl2-criterion} to obtain
PECKness.  Finally, a skeleton contributes
$t^b(1+t)^u$ to the shifted descent enumerator, giving
\eqref{stirling:partial-gamma}.
\end{proof}

\begin{example}
Take a ternary tree whose root $1$ has middle child $2$, and whose
vertex $2$ has right child $3$.  Its word is $122331$.  Moving the
child $3$ to the left produces $133221$.  The two words have two
plateaux and respectively one and two internal descents, so they
form a two-element chain in $\mathcal R_{3,2}$ with shifted ranks
zero and one.  The three plateau slices at $n=3$ have descent
enumerators
\[
 \begin{array}{c|ccc}
 p&3&2&1\\\hline
 \displaystyle\sum_{\operatorname{plat}(\sigma)=p}t^{d(\sigma)}
 &1+4t+t^2&t(4+4t)&t^2.
 \end{array}
\]
Their sum is $1+8t+6t^2$.  The separate symmetry centers explain why
the full descent polynomial is not palindromic.
\end{example}

\begin{remark}
The fixed-plateau theorem concerns one Stirling permutation.
For synchronised tuples, a common descent history specifies whether
each inserted label uses a left or middle slot, but does not specify
which of the two.  Thus simultaneous outer-slot flips need not
preserve this family.  For example, the pair $(2211,1221)$ has common
history $(1)$; flipping its available outer edge gives
$(1122,1221)$, with different histories.  The stability theorem for
arbitrary $\ell$ therefore does not imply a corresponding
fixed-plateau PECK or gamma theorem for synchronised tuples.
\end{remark}
\section*{Declaration of AI usage}
AI tools have been used for coding assistance for data collection, symbolic calculations, and language editing. AI tools were not used to find proofs of the results in this paper.

\end{document}